\numberwithin{equation}{section}
\newtheorem{theorem}{Theorem}[section]
\newtheorem{lemma}[theorem]{Lemma}
\newtheorem{corollary}[theorem]{Corollary}
\newtheorem{proposition}[theorem]{Proposition}
\newtheorem{problem}[theorem]{Problem}
\theoremstyle{definition}
\newtheorem{definition}[theorem]{Definition}
\newtheorem{remark}[theorem]{Remark}
\theoremstyle{remark}
\begin{document}
\title{  Structural stability of cylindrical supersonic solutions to the steady Euler-Poisson system }
\author{Chunpeng Wang\thanks{School of Mathematics, Jilin University, Changchun, Jilin Province, China, 130012. Email: wangcp@jlu.edu.cn}\and
Zihao Zhang\thanks{School of Mathematics, Jilin University, Changchun, Jilin Province, China, 130012. Email: zhangzihao@jlu.edu.cn}}
\date{}

\newcommand{\de}{{\mathrm{d}}}
\def\div{{\rm div\,}}
\def\curl{{\rm curl\,}}
\def\th{\theta}
\newcommand{\ro}{{\rm rot}}
\newcommand{\sr}{{\rm supp}}
\newcommand{\sa}{{\rm sup}}
\newcommand{\va}{{\varphi}}
\newcommand{\me}{\mathcal{A}}
\newcommand{\ml}{\mathcal{V}}
\newcommand{\md}{\mathcal{D}}
\newcommand{\mg}{\mathcal{G}}
\newcommand{\mh}{\mathcal{H}}
\newcommand{\mf}{\mathcal{F}}
\newcommand{\ms}{\mathcal{S}}
\newcommand{\mt}{\mathcal{T}}
\newcommand{\mn}{\mathcal{N}}
\newcommand{\mb}{\mathcal{P}}
\newcommand{\mm}{\mathcal{B}}
\newcommand{\mj}{\mathcal{J}}
\newcommand{\mk}{\mathcal{K}}
\newcommand{\my}{\mathcal{U}}
\newcommand{\mw}{\mathcal{W}}
\newcommand{\mq}{\mathcal{Q}}
\newcommand{\ma}{\mathcal{L}}
\newcommand{\mc}{\mathcal{C}}
\newcommand{\mi}{\mathcal{I}}
\newcommand{\mz}{\mathcal{Z}}
\newcommand{\n}{\nabla}
\newcommand{\e}{\tilde}
\newcommand{\m}{\Omega}
\newcommand{\h}{\hat}
\newcommand{\w}{\Psi}
\newcommand{\x}{\bar}
\newcommand{\A}{\rho}
 \newcommand{\q}{{\rm R}}
\newcommand{\p}{{\partial}}
\newcommand{\z}{{\varepsilon}}
\renewcommand\figurename{\scriptsize Fig}
\pagestyle{myheadings} \markboth{Smooth supersonic Euler-Poisson  flows   }{Smooth supersonic Euler-Poisson  flows }\maketitle
\begin{abstract}
     This paper concerns the structural   stability of smooth cylindrically symmetric  supersonic Euler-Poisson flows in nozzles.  Both three-dimensional  and axisymmetric perturbations are considered.  On one hand, we establish the existence and uniqueness of  three-dimensional smooth supersonic  solutions to  the potential flow model of the steady  Euler-Poisson system.  On the other hand,  the existence and uniqueness of smooth   supersonic flows with nonzero vorticity   to the steady axisymmetric  Euler-Poisson system are proved. The problem is reduced to solve a nonlinear boundary value problem for a hyperbolic-elliptic mixed system. One of the key ingredients in the analysis of three-dimensional supersonic irrotational flows
is    the well-posedness theory for a linear second order     hyperbolic-elliptic coupled   system, which is achieved by using the multiplier method and the reflection
technique to derive the  energy estimates. For smooth axisymmetric supersonic  flows with nonzero  vorticity,  the deformation-curl-Poisson decomposition is utilized
 to reformulate  the steady axisymmetric Euler-Poisson system as a deformation-curl-Poisson   system together with several transport equations, so that one can design  a two-layer iteration  scheme to establish    the nonlinear structural stability  of the background supersonic flow within the class of axisymmetric rotational flows.
\end{abstract}
\begin{center}
\begin{minipage}{5.5in}
Mathematics Subject Classifications 2020: 35G60, 35J66, 35L72, 35M32, 76N10,76J20.\\
Key words:   supersonic Euler-Poisson  flows, structural stability, vorticity,  hyperbolic-elliptic coupled system,  deformation-curl-Poisson  decomposition.
\end{minipage}
\end{center}
\section{Introduction  }\noindent
\par
 In this paper, we  are concerned with    smooth
 supersonic Euler-Poisson flows in a cylindrical nozzle, which is   governed by the three-dimensional steady compressible Euler-Poisson system:
\begin{equation}\label{1-1-1}
\begin{cases}
\begin{aligned}
&\partial_{x_1}(\rho u_1)+\partial_{x_2}(\rho u_2)+\partial_{x_3}(\rho u_3)=0,\\
&\partial_{x_1}(\rho u_1^2)+\partial_{x_2}(\rho u_1 u_2)+\partial_{x_3}(\rho u_1 u_3)+\partial_{x_1} P =\rho \p_{x_1} \Phi,\\
&\partial_{x_1}(\rho u_1u_2)+\partial_{x_2}(\rho u_2^2)+\partial_{x_3}(\rho u_2 u_3)+\partial_{x_2} P=\rho \p_{x_2} \Phi,\\
&\partial_{x_1}(\rho u_1u_3)+\partial_{x_2}(\rho u_2 u_3)+\partial_{x_3}(\rho u_3^2)+\partial_{x_3} P=\rho \p_{x_3} \Phi,\\
&\div(\rho\mathcal{E}{\bf u}+P{\bf u})=\rho{\bf u}\cdot\nabla\Phi,\\
&{\Delta} \Phi= \rho-b({\bf x}).
\end{aligned}
\end{cases}
\end{equation}
 In the above system,   ${\bf x}=(x_1,x_2,x_3)$ is the  spatial variable,
 ${\bf u}=(u_1,u_2,u_3)$ is the macroscopic particle velocity, $\rho$ is the electron density, $P$ is the pressure, $\mathcal{E}$ is the total energy, respectively. The eletrostatic potential $ \Phi $  is generated by the Coulomb force of particles.  $b$  is a positive function representing the density of fixed,
positively charged background ions.  The system \eqref{1-1-1} is closed with the aid of definition of specific total energy and the equation of state
\begin{equation*}
\mathcal{E}=\frac{|{\bf u}|^2}{2}+\mathfrak{e} \quad \text{and}\quad P=P(\rho, \mathfrak{e}),
\end{equation*}
where $\mathfrak{e}$ is the internal energy.
 We consider the ideal polytropic gas for
which the total energy and the pressure are given by
\begin{equation*}
  \mathcal{E}=\frac1 2|{\bf u}|^2+\frac{ P}{(\gamma-1)\rho}
\quad{\rm {and}}\quad P(\rho,S)= e^{S}\rho^{\gamma},
\end{equation*}
where $S$ is  the entropy and $ \gamma> 1 $ is the adiabatic exponent. Denote   the Mach
number $M$ by
$$M=\frac{|{\bf u}|}{\sqrt{P_\rho(\rho, S)}}, $$
where
$ \sqrt{P_\rho(\rho, S)}$ is called the local sound speed.  If $M<1$,  the flow is said to be subsonic. If $M>1$, the flow is said to be supersonic.
\par The well-posedness of steady compressible  flows in nozzles is a nature issue in engineering application, such as designs of turbines and wind tunnels. This determines that it must be a fundamental
subject in the mathematical theory of fluid dynamics. The Euler-Poisson system can be used to model  the propagation of electrons in sub-micron semiconductor devices and plasmas \cite{MRS19},  and the biological transport
of ions for channel proteins \cite{Shu}.    If there is no Poisson portion,   the system \eqref{1-1-1} becomes the steady Euler  system which describes gas motion of compressible inviscid flows.  There have been many  interesting results on   smooth Euler flows through various nozzles, such as subsonic, subsonic-sonic, sonic-supersonic  and transonic flows, see \cite{CX14,DWX14,DXY11,DD11,nw16,WX2013,WX15,wang15,WX2016,WX2019,WX2020,wang22,WS15, WYZ21,WX23-1, WX23,WX23-2,XX10,XX2010} and references therein.   Compared with the steady Euler system,  the mathematical research for the steady Euler-Poisson system has essential difficulties. The first is that the system \eqref{1-1-1} changes type when the flow speed varies from subsonic to supersonic. For a subsonic state, \eqref{1-1-1} can be decomposed into
a nonlinear elliptic system  and homogeneous transport equations. For  a supersonic state, \eqref{1-1-1} can be decomposed into a  nonlinear hyperbolic-elliptic coupled system  and homogeneous transport equations. The second is that the Poisson equation for the eletrostatic potential is  coupled with the other equations in a nonlinear way. Therefore,  the study of multi-dimensional Euler-Poisson system  is much more
complicated and thus more interesting and challenging.
\par There have been a few significant
progress on  the   structural stability of physically nontrivial steady flows for the multi-dimensional Euler-Poisson system in a nozzle. For subsonic  flows, when the background solutions have
low Mach numbers and a small electric field,  the  existence and uniqueness
of three-dimensional subsonic flows with nonzero vorticity has been obtained in \cite{W14}.  In  \cite{BDX16}, by discovering  a special
structure of the elliptic system to yield a coercive estimate, the existence and stability of multi-dimensional subsonic potential flows  were proved. Later on,   they extended this result to the case of two-dimensional subsonic flow with nonzero
vorticity in \cite{BDX14} through the Helmholtz decomposition. They also investigated the  existence and stability of two-dimensional subsonic flows
with self-gravitiation in  \cite{BDX15}.  In \cite{BW18},   a  Helmholtz decomposition of the velocity field for three-dimensional axisymmetric flows was introduced to establish the
structural stability  of  axisymmetric
 subsonic flows with nonzero vorticity in a cylinder.     Different from this decomposition, the author in \cite{WS19} utilized the  deformation-curl-Poisson  decomposition to    prove  the structural stability of one-dimensional subsonic flows under
three-dimensional perturbations of the boundary conditions. Recently, the existence and uniqueness of two-dimensional subsonic flows
with self-gravitiation in an annulus were established in  \cite{DLW25}.
For supersonic  flows, the authors in \cite{BDXJ21} proved  the structural stability  of two-dimensional supersonic irrotational  flows  and rotational  flows. The  existence and uniqueness of  supersonic  potential flows  in a divergent nozzle was obtained in \cite{DWY23}. The authors in \cite{BP21,BP23} investigated the existence and stability of three-dimensional
supersonic  potential flows  in a rectangular nozzle.  For transonic  flows, the authors in \cite{Bae1}   established the existence and uniqueness of two-dimensional smooth transonic sloutions with nonzero vorticity.
 \par The first part of this paper is  to investigate  the  structural stability of smooth cylindrically  symmetric supersonic Euler-Poisson  flows within the class of irrotational flows.  The existence and uniqueness of  three-dimensional smooth supersonic irrotational flows rely on the well-posedness theory for a boundary value problem to   a linear system consisting of a second order hyperbolic equation and a second order elliptic
equation weakly coupled together. We find an appropriate multiplier to yield  a priori $H^1 $
energy estimate. Based on some  properties of the background supersonic flow, the multiplier is obtained by solving a differential equation.  With the $ H^1 $ energy estimate, one can view the  hyperbolic-elliptic coupled   system  as  a  hyperbolic equation and a elliptic
equation by
leaving only the principal term on the left-hand side of the equations and  use the method of reflection to derive the high order derivatives estimates. Then Galerkin's method with Fourier series can be used for the construction of the approximated solutions and a simple contraction mapping argument  yields the solution to the nonlinear problem.
 \par The second part of this paper is  to  establish  the  structural stability of smooth cylindrically  symmetric supersonic Euler-Poisson  flows within the class of axisymmetric rotational flows. To  treat smooth axisymmetric supersonic flows with
nonzero vorticity,  there are several major difficulties that must be
overcome. The first is to deal with the hyperbolic-elliptic coupled  structure in the supersonic region.     The steady axisymmetric  Euler-Poisson system is a
hyperbolic-elliptic mixed system, whose effective decomposition into elliptic and hyperbolic modes is crucial for the solvability of the nonlinear
boundary problem.  Here we  utilize  the deformation-curl-Poisson  decomposition introduced  in \cite{WX19,WS19}
to   reformulate the steady axisymmetric Euler-Poisson system as a deformation-curl-Poisson system  for the velocity field and the eletrostatic potential  together with three transport equations for the angular velocity, the Bernoulli's quantity and the entropy.  The second  is to choose some appropriate function
spaces and design an elaborate two-layer iteration scheme
to find the fixed point of the nonlinear problem. In the deformation-curl-Poisson system, the vorticity is resolved by an algebraic
equation for the angular velocity, the Bernoulli's quantity and the entropy. There is a loss of one derivative in the equation for the vorticity when dealing with supersonic flows. Using the continuity equation, we introduce a stream function to represent the angular velocity, the Bernoulli's function and the entropy as  functions of the stream function.
  By utilizing the advantage of one order higher regularity of the stream function than the radial velocity and the vertical velocity in the  region, we   gain one more  order derivatives
estimates for the angular velocity, the Bernoulli's function and the entropy than the radial velocity and the vertical velocity, which  is crucial for
us to close the energy estimates.
\par This paper will be arranged as follows. In Section 2, we formulate the problem in detail and state  main results.  In Section 3, we  establish the structural stability of the background supersonic flow within
 the class of  three-dimensional irrotational  flows.   In Section 4, we    establish the structural stability of the background supersonic flow within the class of axisymmetric rotational flows.

  \section{The  nonlinear problem and main results}\noindent
\par In this section, we give a detailed formulation of smooth supersonic Euler-Poisson flows in a cylindrical nozzle and
state  main results. To this end, introduce the cylindrical coordinates
$(r, \theta, z)$:
 \begin{eqnarray}\label{coor}
r=\sqrt{x_1^2+x_2^2},\quad \theta=\arctan \frac{x_2}{x_1},\quad z=x_3.
\end{eqnarray}
Assume that the velocity, the density, the pressure and the eletrostatic
potential are of the form
\begin{equation*}
\begin{aligned}
&{\bf u}({\textbf x})= U_{1}(r,\th,z)\mathbf{e}_r+U_{2}(r,\th,z)\mathbf{e}_\theta+U_{3}(r,\th,z)\mathbf{e}_z, \\
&\rho({\textbf x})=\rho(r,\th,z),\ \ P({\textbf x})=P(r,\th,z),\ \ \Phi({\textbf x})=\Phi(r,\th,z),\ \
\end{aligned}
\end{equation*}
 with
\begin{equation*}
\mathbf{e}_r =
\begin{pmatrix}
\cos \th  \\
  \sin \th \\
  0
\end{pmatrix}, \quad
\mathbf{e}_{\th } =
\begin{pmatrix}
  - \sin \th  \\
  \cos \th\\
  0
  \end{pmatrix}, \quad
  \mathbf{e}_z =
\begin{pmatrix}
 0 \\
  0 \\
  1
\end{pmatrix},
\end{equation*}
where $ U_1 $,  $ U_2 $ and  $ U_3  $ represent the radial velocity, the angular velocity  and the vertical velocity, respectively.
Then
the steady compressible  Euler-Poisson system \eqref{1-1-1} in the cylindrical coordinates takes the
form
\begin{eqnarray}\label{1-2}
\begin{cases}
\begin{aligned}
&\partial_r(r\rho U_1)+\partial_{\theta}(\rho U_2)+\p_z(r\rho U_3)=0,\\
&\rho\left(U_1\partial_r +\frac{U_2}{r}\partial_{\theta}+U_3\p_z\right)U_1+\partial_r P-\frac{\rho U_2^2}r=\rho\p_r\Phi,\\
&\rho\left(U_1\partial_r +\frac{U_2}{r}\partial_{\theta}+U_3\p_z\right)U_2
+\frac{\partial_{\theta}P}{r}+\frac{\rho U_1U_2}{r}=\frac{\rho\p_{\th}\Phi} r,\\
&\rho\left(U_1\partial_r +\frac{U_2}{r}\partial_{\theta}+U_3\p_z\right)U_3+\partial_{z}P=
\rho\partial_{z}\Phi,\\
&\rho\left(U_1\partial_r +\frac{U_2}{r}\partial_{\theta}+U_3\p_z\right)K=0,\\
&\left(\p_r^2+\frac 1 r\p_r+\frac{1}{r^2}\p_{\th}^2+\p_z^2\right)\Phi=\rho-b,
\end{aligned}
\end{cases}
\end{eqnarray}
where  the  Bernoulli's function $K$ is defined  as
\begin{equation*}
K:=\frac{1}{2}|{\bf u}|^2+\frac{\gamma P}{(\gamma-1)\rho}-\Phi=\frac{1}{2}|{\bf u}|^2+\frac{\gamma e^S\rho^{\gamma-1}}{\gamma-1}-\Phi.
\end{equation*}
In the cylindrical coordinates, the vorticity has the form $\text{curl }{\bf u}=\omega_1 {\bf e}_r + \omega_{2}{\bf e}_{\theta}+ \omega_3 {\bf e}_z$, where
\begin{eqnarray}\label{vor1}
\omega_1=\frac{1}{r}\partial_{\theta} U_3 -\partial_z U_2,\ \ \omega_{2}= \partial_z U_1- \partial_r U_3,\ \omega_3=\frac{1}{r}\partial_r(r U_2)-\frac{1}{r}\partial_{\theta} U_1.
\end{eqnarray}
The cylindrical nozzle $\Omega$  concerned in this paper can be described as
\begin{equation*}
\Omega=\{(r,\th,z):r_0<r<r_1,\ (\th,z)\in D\}, \ \ D=(-\theta_0,\theta_0)\times(-1,1),
\end{equation*}
where  $0<r_0<r_1<\infty,\theta_0\in (0,\frac{\pi}{2})$ are fixed positive constants. The entrance, exit and  nozzle walls of    $\Omega$   are denoted by
\begin{equation*}
\begin{aligned}
&\Gamma_{en}=\{(r,\th,z):r=r_0,\  -\th_0<\th<\th_0,-1<z<1\},\\
		&\Gamma_{ex}=\{(r,\th,z):r=r_1, \  -\th_0<\th<\th_0,-1<z<1\},\\
&\Gamma_{\th_0}^\pm=\{(r,\th,z):r_0<r<r_1,\  \th=\pm \th_0,-1<z<1\},\\
&\Gamma_{1}^\pm=\{(r,\th,z):r_0<r<r_1,\  -\th_0<\th<\th_0,z=\pm 1\}.\\
		\end{aligned}
\end{equation*}
\par  In this paper,  we focus on the structural stability of a  class of smooth cylindrically
symmetric supersonic flows. More precisely, fix $b$ to be a constant $b_0>0$, the background flow is described by smooth functions of the form
\begin{equation*}
{\bf u}({\textbf x})=\bar U(r)\mathbf e_r,
 \ \ \rho({\textbf x})=\bar\rho(r),\ \  S({\textbf x})=\bar S(r),\ \  \Phi({\textbf x})=\bar \Phi(r), \ \  r\in[r_0, r_1],
\end{equation*}
which solves the following system:
\begin{equation}\label{1-4}
	\begin{cases}
\begin{aligned}
		&(r \bar \rho  \bar U)'(r)=0,\ \ & r\in[r_0, r_1],\\
     	 &\bigg(\bar U \bar U'+\frac{1}{\bar\rho}\bar P'\bigg)(r)= \bar E(r),\ \ & r\in[r_0, r_1],\\
    	 &(\bar U \bar S')(r)=0, \ \ &  r\in[r_0, r_1],\\
    	 &(r\bar E)'(r)=r(\bar \rho(r)-b_0),\ \ &  r\in[r_0, r_1],\\
    \end{aligned}
	\end{cases}
\end{equation}
with the boundary conditions at the entrance:
\begin{equation}\label{1-4-b}
\bar\rho(r_0)=\rho_0,\quad
\bar U(r_0)=U_{0},\quad
 \bar S(r_0)=S_0, \quad
\bar E(r_0)=E_0.
\end{equation}
Here
$$ \bar P(r)=(e^{\bar S}\bar \rho^\gamma)(r) \ \ {\rm{and}} \ \ \bar E(r)=\bar\Phi'(r), \ \  r\in[r_0, r_1].$$
   \par  Define $J_0=r_0\rho_0U_{0}$. 
Then
\begin{equation*}
(\bar\rho\bar U)(r)=\frac{J_0}{r} \ \ {\rm{and}} \ \
\bar S(r) = S_0, \ \  r\in[r_0, r_1].
\end{equation*}
Therefore,  \eqref{1-4} can be reduced to the following ODE system for $(\bar U, \bar E)$:
\begin{equation}\label{1-5}
\begin{cases}
\begin{aligned}
&\bar U^{\prime}(r)=\frac{r(\bar U \bar E)(r)+ (\bar c^2\bar U) (r)}
{r( \bar U^2(r)- \bar c^2(r))}, \ \ & r\in[r_0, r_1],\\
&\bar E^{\prime}(r)=\bar \rho(r)-b_0-\frac{1}{r}\bar E(r),  \ \ &  r\in[r_0, r_1],\\
\end{aligned}
\end{cases}
\end{equation}
where $ \bar c^2(r)=\gamma  e^{ S_0} \bar\rho^{\gamma-1}(r) $.
\par We have the following well-posedness, which has been established in \cite{DWY23}.
\begin{proposition}\label{pro1}
Fix $b_0> 0 $. Given   constants $\rho_0>0$, $ U_{0}>0$,    $S_0>0$ and  $E_0$ satisfying
\begin{equation*}
0<J_0<J^\sharp \ \ {\rm{and}} \  \  U_\sharp<U_0<U^\sharp,
\end{equation*}
where
\begin{equation*}
J^\sharp=\left(\frac{r_0^{2(2\gamma-1)}}{2^{\gamma+1}(\gamma e^{S_0})^3}\right)^{\frac{1}
{2(\gamma-2)}},  \ \
U_\sharp=\left(\frac{\gamma e^{S_0}J_0^{\gamma-1}}{r_0^{\gamma-1}}\right)^{\frac{1}
{\gamma+1}}, \quad U^\sharp=(2r_0J_0)^{\frac13}.
\end{equation*}
Then there exists a positive constant $R_0$ depending only the data $(\gamma,r_0,b_0,\A_0,U_0,S_0, E_0) $ such that for any $ r_1\in(r_0,r_0+R_0)$,
  the initial value problem \eqref{1-4} and \eqref{1-4-b} has a unique smooth solution $(\bar\rho,\bar  U,  \bar S, \bar E)$ satisfying
\begin{eqnarray}\label{1-9}
U_{\sharp}<\bar U(r) < U^{\sharp}\ \  {\rm{and}}\ \
\bar U^2(r)>\bar c^2(r),\ \  r\in[r_0, r_1].
\end{eqnarray}

\end{proposition}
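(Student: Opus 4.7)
The plan is to reduce \eqref{1-4}--\eqref{1-4-b} to the planar ODE system \eqref{1-5} for the pair $(\bar U,\bar E)$, verify that the prescribed initial data lies in the supersonic admissible open set, invoke the standard Picard--Lindel\"of theory to obtain a smooth local solution, and then use continuity to guarantee that the strict inequalities $U_\sharp<\bar U<U^\sharp$ and $\bar U^2>\bar c^2$ persist on a small interval $[r_0,r_0+R_0]$.

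First I would combine the continuity equation $(r\bar\rho\bar U)'=0$ with the entropy transport $\bar U\bar S'=0$ to obtain the algebraic identities $\bar\rho(r)=J_0/(r\bar U(r))$ and $\bar S(r)\equiv S_0$. Substituting these into the Bernoulli relation and the Poisson equation (together with $\bar E=\bar\Phi'$) yields exactly the reduced system \eqref{1-5}, whose right-hand side is $C^\infty$ on the open region
\[
\mathcal{O}=\bigl\{(r,U,E):\,r>0,\ U>0,\ U^2>\gamma e^{S_0}(J_0/(rU))^{\gamma-1}\bigr\}.
\]
Next I would check that $(r_0,U_0,E_0)\in\mathcal{O}$: using $\rho_0=J_0/(r_0U_0)$, the inequality $U_0^2>\bar c^2(r_0)$ is equivalent to $U_0^{\gamma+1}>\gamma e^{S_0}(J_0/r_0)^{\gamma-1}$, which is precisely $U_0>U_\sharp$. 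The two other smallness assumptions $0<J_0<J^\sharp$ and $U_0<U^\sharp$ are needed to ensure that the admissible window $(U_\sharp,U^\sharp)$ is in fact nonempty (one verifies $U_\sharp<U^\sharp$ is equivalent to $J_0<J^\sharp$ by raising to the $3(\gamma+1)$-th power) and to keep the numerator $r\bar U\bar E+\bar c^2\bar U$ of $\bar U'$ bounded on the admissible set.

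Then I would apply Cauchy--Lipschitz on $\mathcal{O}$ to produce a unique local $C^1$ solution $(\bar U,\bar E)$ starting from $(U_0,E_0)$ at $r=r_0$; smoothness of the right-hand side upgrades this to $C^\infty$ by the usual bootstrap. To obtain $R_0$ depending only on the data, I would pick a closed neighborhood $\mathcal{K}\Subset\mathcal{O}$ around $(U_0,E_0)$ on which the supersonic margin $\bar U^2-\bar c^2$ is uniformly bounded below by a positive constant and on which the right-hand side of \eqref{1-5} is uniformly Lipschitz. A standard barrier/Gr\"onwall estimate then shows that the trajectory cannot leave $\mathcal{K}$ during an interval whose length $R_0$ depends only on $(\gamma,r_0,b_0,\rho_0,U_0,S_0,E_0)$; on that interval the strict inequalities in \eqref{1-9} are automatic. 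Finally $\bar\rho$ and $\bar S$ are recovered via the algebraic formulas above, which are smooth because $r\bar U>0$.

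The step that requires real attention is the quantification of $R_0$: since $\bar c^2$ itself depends on $\bar U$ through $\bar\rho=J_0/(r\bar U)$, one must confirm that the supersonic margin $\bar U^2-\bar c^2$ is a genuinely continuous function of $r$ along the flow and cannot be driven to zero on a short interval. A uniform bound on $|\bar U'|+|\bar E'|$ over $\mathcal{K}$ reduces this to an elementary continuity argument, but it is precisely here that the upper bound $U_0<U^\sharp$ and the smallness $J_0<J^\sharp$ are used in an essential way, since they prevent the right-hand side of \eqref{1-5} from amplifying perturbations too rapidly as $r$ increases.
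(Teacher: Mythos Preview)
The paper does not prove Proposition~\ref{pro1}; it simply records that the result ``has been established in \cite{DWY23}'' and moves on. Your outline---reduce to the planar system \eqref{1-5}, observe that $U_0>U_\sharp$ is exactly the supersonic condition $U_0^2>\bar c^2(r_0)$ at $r=r_0$, apply Picard--Lindel\"of on the open region $\mathcal{O}$ where the right-hand side is smooth, and propagate the strict inequalities by continuity to obtain $R_0$---is the natural and correct route, and is essentially what one finds in \cite{DWY23}.

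One minor correction concerning the role of the upper bound $U_0<U^\sharp$: for the local ODE existence itself you do not need this condition to ``prevent amplification'' of the right-hand side; any initial point in $\mathcal{O}$ already yields a uniform Lipschitz bound on a compact neighborhood, and that is all Picard--Lindel\"of requires. The inequality $\bar U<U^\sharp=(2r_0J_0)^{1/3}$ is simply a second open condition that, like $\bar U>U_\sharp$, persists for short time by continuity. Its genuine significance surfaces only later in the paper, in the proof of Lemma~\ref{pro4}, where $\bar U^3<2r_0J_0$ is used to show $\bar b_3(r)-\tfrac{1}{2r^2}>0$, a sign condition needed for the multiplier energy estimate. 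In Proposition~\ref{pro1} the constraint $U_0<U^\sharp$ is recorded so that \eqref{1-9} is available downstream, not because the ODE argument itself depends on it.
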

Define
\begin{eqnarray}\label{1-q}
 && \bar P(r)=e^{S_0} \bar\rho^{\gamma}(r) \ \ {\rm{and}} \ \
 \bar\Phi(r)= \int_{r_0}^{r} \bar E(s) \de s, \ \  r\in[r_0, r_1].
\end{eqnarray}
Then   $(\bar\rho,\bar  U, \bar P, \bar \Phi)(r)$ satisfies  \eqref{1-2} in $ \Omega $. Furthermore, denote  the  Bernoulli's function by
\begin{eqnarray}\label{1-10}
\bar K(r)=\frac{\bar U^2(r)}{2}+\frac{\gamma e^{S_0}\bar\rho^{\gamma-1}(r)}{\gamma-1}-\bar\Phi(r),\ \  r\in[r_0, r_1], \quad K_0= \frac{U_0^2}{2}+\frac{\gamma e^{S_0}\rho_0^{\gamma-1}}{\gamma-1} .
\end{eqnarray}
It follows from the second equation in \eqref{1-4} that
 \begin{eqnarray}\label{1-10-1}
\bar K(r)=K_0, \ \  r\in[r_0, r_1].
 \end{eqnarray}
\begin{definition}\label{de1}
$(\bar\rho,\bar  U, \bar P, \bar \Phi)$     is called the  background solution  associated with the entrance data $(b_0, \A_0,$\\ $ U_{0},S_{0}, E_{0})$.
  \end{definition}
   The main purpose of this work is to investigate the structural stability of the
background solution in the  cylindrical nozzle $\m$   by the following regimes:
\begin{enumerate}[ \rm (i)]
\item  Firstly, we  consider the structural stability of the background
solution under three-dimensional perturbations of suitable boundary
conditions  for the potential flow model.
 \item  Secondly, we  consider the structural stability of the background
solution under axisymmetric perturbations of suitable boundary
conditions   for the steady
Euler-Poisson system.
\end{enumerate}
\subsection{Smooth supersonic irrotational flows}\noindent
\par  We start with smooth supersonic  flows with zero vorticity.  For  the isentropic irrotational flows, $ S $ is a constant and  $ \text{curl }{\bf u}=0 $.  Without loss of  generality, we assume $ S=S_0 $.
By the vector identity ${\bf u}\cdot\nabla {\bf u}= \nabla \frac12 |{\bf u}|^2- {\bf u}\times \text{curl }{\bf u}$, the momentum equations in \eqref{1-1-1} imply
\begin{eqnarray}\label{momentum}
\nabla \bigg(\frac{1}2|{\bf u}|^2 +\frac{\gamma e^{S_0}\rho^{\gamma-1}}{\gamma-1}-\Phi\bigg)={\bf u}\times \text{curl }{\bf u}=0, \ \  \text{in}\  \ \Omega,
\end{eqnarray}
 form which one obtains  that $ \frac{1}2|{\bf u}|^2 +\frac{\gamma e^{S_0}\rho^{\gamma-1}}{\gamma-1}-\Phi$ is a constant. For simplicity, we assume that
\begin{eqnarray}\label{momentum-1}
 \frac{1}2|{\bf u}|^2 +\frac{\gamma e^{S_0}\rho^{\gamma-1}}{\gamma-1}-\Phi=K_0,  \ \  \text{in}\  \ \Omega.
\end{eqnarray}
In terms of the  cylindrical coordinates, one has
\begin{eqnarray*}
\frac{1}{r}\partial_{\theta} U_3 -\partial_z U_2=0, \ \  \partial_z U_1- \partial_r U_3=0,\ \frac{1}{r}\partial_r(r U_2)-\frac{1}{r}\partial_{\theta} U_1=0, \  \text{in}\  \ \Omega.
\end{eqnarray*}
Then there exists a potential function $ \phi(r,\th,z) $ such that
\begin{eqnarray*}
\p_r\phi=U_1, \ \ \p_\th\phi=rU_2, \ \ \p_z\phi=U_3, \ \  \text{in}\  \ \Omega.
\end{eqnarray*}
Thus it follows from \eqref{momentum-1} that the density can be rewritten as
\begin{eqnarray*}
\rho=
\left(\frac{\gamma-1}{\gamma e^{S_0}}\right)
^{\frac{1}{\gamma-1}}\bigg(K_0+\Phi-\frac{1}{2}\bigg((\p_r\phi)^2+
\frac{(\p_\th\phi)^2}
{r^2}+(\p_z\phi)^2\bigg)
\bigg)^{\frac{1}{\gamma-1}}, \  \text{in}\  \ \Omega.
\end{eqnarray*}
The system  \eqref{1-2}  can be reduced to the  the following system in $  \Omega $, which is called the potential flow model of the steady Euler-Poisson system:
\begin{equation}\label{2-9}
\begin{cases}
\begin{aligned}
&\partial_r(r\rho \p_r\phi)+\partial_\theta(\rho \p_{\th}\phi)+\p_z(r\rho \p_z\phi)=0,\\
&\left(\p_r^2+\frac 1 r\p_r+\frac{1}{r^2}\p_{\th}^2+\p_z^2\right)\Phi=\rho-b.\\
\end{aligned}
\end{cases}
\end{equation}

\par  For the potential flow, we investigate  the following problem.
\begin{problem}\label{probl1}
Given functions $(b^\ast, U_{1, en}^\ast, E_{ en}^\ast, \Phi_{ ex}^\ast)$, sufficiently close to $(b_0,U_{0},E_0,\bar\Phi(r_1))$,
find  a solution
$( \phi, \Phi)$ to the system \eqref{2-9} in $\m$ satisfying  the following properties.
\begin{enumerate}[ \rm (1)]
 \item
   (Positivity of the density and the radial velocity)
 \begin{equation}\label{1-c-c-c}
 \rho>0  \ \ {\rm{ and}} \ \ \p_r\phi>0,   \ \ {\rm{ in}} \ \ \overline{\m}.
 \end{equation}
\item (The boundary conditions)
\begin{equation}\label{1-c}
\begin{cases}
\phi(r_0,\th,z)=0,\ (\p_r\phi, \p_r\Phi)(r_0,\th,z)=(U_{1,en}^\ast, E_{en}^\ast)(\th,z),\ \ &{\rm{on}}\ \ \Gamma_{en},\\
\Phi(r_1,\th,z)=\Phi_{ex}^\ast(\th,z), \ \ &{\rm{on}}\ \ \Gamma_{ex},\\
\p_\th\phi(r,\pm\th_0,z)=\p_\th\Phi(r,\pm\th_0,z)=0, \ \  &{\rm{on}}\ \ \Gamma_{\th_0}^\pm,\\
\p_z\phi(r,\th,\pm 1)=\p_z\Phi(r,\th,\pm 1)=0, \ \  &{\rm{on}}\ \ \Gamma_{1}^\pm.\\
\end{cases}
\end{equation}
\item   (Supersonic speed)
\begin{equation}\label{1-c-z}
 (\p_r\phi)^2+
\frac{(\p_\th\phi)^2}
{r^2}+(\p_z\phi)^2>\gamma e^{S_0}{\rho}^{\gamma-1}, \ \ {\rm{in}}\ \
\overline{\m}.
\end{equation}
 \end{enumerate}
\end{problem}
\par The first main result in this paper states the structural stability of cylindrically  symmetric
supersonic  flows under three-dimensional perturbations of suitable boundary conditions, which also yields the existence and uniqueness of smooth supersonic irrotational flows to the  system \eqref{2-9}.
\begin{theorem}\label{th1}
  For given functions $b^\ast\in C^2(\overline{\m})$, $U_{ 1,en}^\ast\in C^3(\overline D)$ with  $\displaystyle{\min_{(\th,z)\in \overline D  }U_{1, en}^\ast>0 }$
  and $(E_{ en}^\ast,\Phi_{ ex}^\ast)\in \left(C^4(\overline D)\right)^2$, set
\begin{equation}\label{1-9-a}
\begin{aligned}
 \sigma(b^\ast,U_{ 1,en}^\ast,E_{ en}^\ast,\Phi_{ ex}^\ast)&:=\|b-b_0\|_{C^2(\overline{\m})}+\|U_{1, en}^\ast-U_{0}\|_{C^3(\overline D)}\\
 &\quad +\|(E_{ en}^\ast,\Phi_{ ex}^\ast)-(E_0,\bar\Phi(r_1))\|_{C^4(\overline D)}.\\
  \end{aligned}
  \end{equation}
  Let  $\bar\phi(r)=\int_{r_0}^r \bar U(s)\de s$ be the potential function of background supersonic solution defined in Definition \ref{de1}. For each $\epsilon_0\in(0,R_0)$ with $R_0>0$ given in Proposition \ref{pro1}, there exist a constant $\bar{r}_1^\ast\in(r_0,r_0+R_0-\epsilon_0]$  so  that for  $r_1\in(r_0, \bar{r}_1^\ast)$,
 if
$(b^\ast,U_{ 1,en}^\ast,E_{ en}^\ast,\Phi_{ ex}^\ast)$ satisfies
\begin{equation}\label{1-t-1}
\sigma(b^\ast,U_{ 1,en}^\ast,E_{ en}^\ast,\Phi_{ ex}^\ast)\leq  \sigma^\ast
\end{equation}
 with $\sigma^\ast>0$ depending only on  $(b_0, \A_0,U_{0}, S_{0}, E_{0},\th_0,r_0,r_1,\epsilon_0)$, and
 the compatibility conditions
\begin{equation}  \label{1-t-2}
\begin{cases}
\begin{aligned}
&\p_{\th} b^\ast=0,  \ {\rm{on}} \ \Gamma_{\th_0}^\pm, \ \p_\th U_{1,en}^\ast( \pm\th_0, z)
=\p_\th^k E_{en}^\ast( \pm\th_0, z)=\p_\th^k\Phi_{ex}^\ast( \pm\th_0, z)
=0, \   z\in[-1,1],\ k=1,3,\\
&\p_{z} b^\ast=0,  \ {\rm{on}} \ \Gamma_{1}^\pm, \ \p_z U_{1,en}^\ast( \th,\pm 1)
=\p_z^{k}E_{en}^\ast( \th,\pm 1)=\p_z^{k}\Phi_{ex}^\ast( \th,\pm 1)
=0, \quad   \th\in[-\th_0,\th_0],\ k=1,3,
\end{aligned}
\end{cases}
 \end{equation}
 then Problem \ref{probl1} has a   unique smooth   irrotational solution $(\phi, \Phi)\in \left( H^4(\m)\right)^2$, which satisfies
 \begin{eqnarray}\label{1-t-3}
\| (\phi,\Phi)- (\bar\phi,\bar\Phi)\|_{H^4(\m)}\leq   \mc_1^\ast \sigma(b^\ast,U_{ 1,en}^\ast,E_{ en}^\ast,\Phi_{ ex}^\ast),
  \end{eqnarray}
   and
   \begin{equation}\label{1-t-3-ss}
 c^2(\Phi,\n \phi)-|\n \phi|^2
 \leq - \kappa^\ast,  \ \ {\rm{in}} \ \ \overline\m.
\end{equation}
Here  $ c(\Phi,\n \phi) $ is the sound speed given by
 \begin{equation*}
c(\Phi,\n \phi)=\sqrt{(\gamma-1)\bigg(K_0+\Phi-\frac12|\n \phi|^2\bigg)}  \ \ {\rm{with}} \ \ \n=\left(\p_r,\frac{\p_\th}r,\p_z\right),
\end{equation*}
    and  the positive constants $ \mc_1^\ast$ and $\kappa^\ast$ depend only on  $(b_0, \A_0,U_{0}, S_{0}, E_{0},\th_0,r_0,r_1,\epsilon_0)$. Furthermore, the solution $(\phi, \Phi  )$ satisfies the compatibility conditions
\begin{equation}
\label{comp-cond-nlbvp-full}
\begin{cases}
\p_{\th}^k\phi=\p_{\th}^k\Phi=0,   \ \ {\rm{on}} \ \Gamma_{\th_0}^\pm, \ \  {\rm{for}} \ \ k=1,3,\\
\p_{z}^k\phi=\p_{z}^k\Phi=0,   \ \ {\rm{on}} \ \Gamma_{1}^\pm, \ \ \ {\rm{for}} \ \ k=1,3,
\end{cases}
\end{equation}
in the sense of trace.
For each $\alpha\in(0,\frac12)$, it follows from \eqref{1-t-3} that
\begin{eqnarray}\label{1-t-7}
  \| (\phi,\Phi)- (\bar\phi,\bar\Phi)\|_{C^{2,\alpha}(\overline{\m})}
 \leq   \mc_2^{\ast}\sigma(b^\ast,U_{ 1,en}^\ast,E_{ en}^\ast,\Phi_{ ex}^\ast)
  \end{eqnarray}
for some constant $ \mc_2^{\ast}>0$ depending only on $(b_0, \A_0,U_{0}, S_{0}, E_{0},  \th_0,r_0, r_1,\epsilon_0,\alpha)$.

 \end{theorem}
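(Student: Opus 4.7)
The plan is to realize $(\phi,\Phi)$ as a perturbation of the background $(\bar\phi,\bar\Phi)$ by writing $\phi=\bar\phi+\psi$, $\Phi=\bar\Phi+\varphi$, and solving the resulting nonlinear system by a contraction argument built on a carefully designed linearized boundary value problem. Expanding the density via the Bernoulli relation and the continuity equation, the unknowns $(\psi,\varphi)$ satisfy a quasilinear system whose principal part has the form
\begin{equation*}
\mathcal{L}_h \psi := A_{11}\p_r^2\psi+\tfrac{A_{22}}{r^2}\p_\theta^2\psi+A_{33}\p_z^2\psi+(\text{l.o.t.}) = f_1(\psi,\varphi,\nabla\psi,\nabla\varphi;\text{data}),
\end{equation*}
\begin{equation*}
\mathcal{L}_e \varphi := \Big(\p_r^2+\tfrac{1}{r}\p_r+\tfrac{1}{r^2}\p_\theta^2+\p_z^2\Big)\varphi + \text{coupling with }\psi = f_2(\psi,\varphi,\nabla\psi;\text{data}),
\end{equation*}
where $A_{11}=\bar U^2-\bar c^2+O(\psi,\varphi)$ is \emph{negative} in the supersonic regime and the angular/vertical coefficients are positive, so $\mathcal{L}_h$ is hyperbolic with $r$ playing the role of time, while $\mathcal{L}_e$ is uniformly elliptic. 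The boundary conditions transfer to $\psi(r_0,\cdot)=0$, $\p_r\psi(r_0,\cdot)=U_{1,en}^\ast-U_0$, $\p_r\varphi(r_0,\cdot)=E_{en}^\ast-E_0$, $\varphi(r_1,\cdot)=\Phi_{ex}^\ast-\bar\Phi(r_1)$, and homogeneous Neumann for both $\psi,\varphi$ on $\Gamma_{\theta_0}^\pm\cup\Gamma_1^\pm$.

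The core technical step is the well-posedness theory for the linearization of $(\mathcal{L}_h,\mathcal{L}_e)$. I would first derive the basic $H^1$ energy estimate: multiply the hyperbolic equation by $\mu(r)\p_r\psi+\nu(r)\psi$ for a weight $(\mu,\nu)$ to be determined, and the elliptic equation by $\lambda(r)\varphi$ (plus suitable multiplier for $\p_r\varphi$ to absorb the boundary contribution at $\Gamma_{en}$). Integration by parts yields a boundary term at $r=r_1$ whose sign is controlled by $\Phi_{ex}^\ast$, an energy at generic $r$, and cross terms from the coupling. The weights $\mu,\nu,\lambda$ must be chosen so that (i) the quadratic form in $(\nabla\psi,\nabla\varphi,\psi,\varphi)$ in the bulk is positive definite, and (ii) the bulk cross terms originating from the $(\psi,\varphi)$-coupling are either sign-definite or absorbable; as the introduction indicates, such a $\mu$ exists by solving a first-order ODE using the monotonicity properties of $\bar U,\bar c,\bar E$ guaranteed by Proposition \ref{pro1}, provided $r_1-r_0$ is sufficiently small. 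Once $H^1$ is closed, higher regularity is obtained by reflecting $(\psi,\varphi)$ evenly across $\theta=\pm\theta_0$ and $z=\pm 1$; the compatibility conditions \eqref{1-t-2} and \eqref{comp-cond-nlbvp-full} (odd vanishing of the data up to order $3$) ensure the reflected fields lie in $H^4$ of the enlarged domain and satisfy the same equations. Differentiating in $\theta$ and $z$ and applying the $H^1$ estimate to the reflected derivatives yields tangential $H^4$ control; the $r$-derivatives are then recovered by algebraically solving the equations for $\p_r^2\psi$ and $\p_r^2\varphi$ in terms of tangential derivatives and lower-order terms.

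Existence of a linear solution is obtained by Galerkin approximation: expand $(\psi,\varphi)$ in the Fourier basis $\{\cos(k\pi(\theta+\theta_0)/(2\theta_0))\cos(\ell\pi(z+1)/2)\}_{k,\ell\geq 0}$, which is adapted to the Neumann boundary conditions on the side walls, truncate to a finite mode system, and solve the resulting finite-dimensional ODE system in $r$ by standard linear ODE theory; uniform $H^4$ bounds then allow passage to the limit. With the linear theory in hand, define the nonlinear map $\mathcal{T}:(\psi,\varphi)\mapsto(\tilde\psi,\tilde\varphi)$ where $(\tilde\psi,\tilde\varphi)$ solves the linearized problem with coefficients and right-hand sides evaluated at $(\psi,\varphi)$. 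For $\sigma^\ast$ sufficiently small, $\mathcal{T}$ maps a small $H^4$ ball $B_{\mathcal{C}_1^\ast\sigma}$ into itself (by the linear estimate) and is a contraction in a weaker norm (say $H^2$) when applied to differences of iterates; its unique fixed point yields the solution, estimate \eqref{1-t-3}, the supersonic condition \eqref{1-t-3-ss} by Sobolev embedding, and \eqref{1-t-7} via $H^4\hookrightarrow C^{2,\alpha}$ for $\alpha<1/2$.

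The main obstacle is the construction of the multiplier $\mu(r)$ and simultaneously weighting both equations so as to control the $(\psi,\varphi)$ coupling in the energy estimate; this is where the supersonic structure of the background flow and the smallness of $r_1-r_0$ are used decisively. A secondary subtlety lies in the reflection step: one must verify that both equations are preserved under even extension across the side walls and that the imposed data satisfy the oddness of $\p_\theta,\p_\theta^3$ (resp. $\p_z,\p_z^3$) ordered exactly as in \eqref{1-t-2} to achieve $H^4$ regularity after reflection, without which the higher-order estimates and hence the contraction argument would fail.
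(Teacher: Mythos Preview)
Your proposal is essentially correct and follows the same overall architecture as the paper: linearize about the background, obtain an $H^1$ energy estimate for the coupled hyperbolic--elliptic system via a multiplier depending only on $r$, bootstrap to $H^4$ using reflection and compatibility, prove existence by Galerkin approximation in the cosine basis, and close by a contraction argument. A few differences are worth noting.

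First, the paper's multiplier is simpler than yours: it uses $Z(r)\p_r\psi$ for the hyperbolic equation and bare $\hat\Psi$ for the elliptic one (no weights $\nu,\lambda$, and no separate multiplier for $\p_r\varphi$). The inhomogeneous Robin datum $\p_r\Psi(r_0,\cdot)=E_{en}^\ast-E_0$ is not absorbed by a multiplier but removed by subtracting an explicit lift $\Psi_{bd}$ so that $\hat\Psi$ has homogeneous entrance/exit data; this cleans up the boundary terms in the energy identity considerably.

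Second, and more substantively, the higher-order estimate for $\psi$ is obtained in the paper by a different mechanism than the one you sketch. You propose tangential differentiation in $(\theta,z)$ followed by reapplication of the $H^1$ estimate, then algebraic recovery of $r$-derivatives. The paper instead differentiates in $r$ (the ``time'' direction), performs the energy estimate on truncated slabs $\Omega_t=\{r_0<r<t\}$, and closes by Gronwall; the tangential second derivatives $\p_\theta^2\psi,\p_{\theta z}^2\psi,\p_z^2\psi$ needed on the right-hand side are controlled by viewing the principal part, frozen in $r$, as a two-dimensional elliptic operator in $D$ and invoking standard elliptic regularity after reflection. Your route is viable, but you gloss over the commutator: when you differentiate in $\theta$, terms of the form $(\p_\theta\tilde A_{11})\p_r^2\psi$ appear on the right, and $\p_r^2\psi$ is not tangential. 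One must substitute for $\p_r^2\psi$ using the equation itself (replacing it by tangential second derivatives plus lower order) and then absorb using the smallness of $\p_\theta\tilde A_{ij}$; this works for $\delta^\ast$ small but should be stated. The paper's $r$-differentiation plus Gronwall avoids this substitution.

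Finally, the paper's contraction is carried out in $H^1$, not $H^2$; either works, but $H^1$ is what the basic energy estimate delivers directly.
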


 \subsection{Smooth axisymmetric supersonic rotational flows}\noindent
 \par Next, we turn to  smooth axisymmetric supersonic   flows with nonzero vorticity.  Using the   cylindrical coordinates \eqref{coor}, any function $h({\bf x})$ can be represented as $h({\bf x})=h(r,\theta,z)$, and a vector-valued function ${\bf H}({\bf x})$ can be represented as ${\bf H}({\bf x})=H_1(r,\theta,z){\bf e}_r+ H_2(r,\theta,z){\bf e}_\th+ H_{3}(r,\theta,z){\bf e}_{z}$. We say that a function $h({\bf x})$ is  axisymmetric if its value is independent of $\theta$ and that a vector-valued function ${\bf H }= (H_1, H_2, H_{3})$ is axisymmetric if each of functions $H_1({\bf x}), H_{2}({\bf x})$ and $H_{3}({\bf x})$ is axisymmetric.
\par Assume that the velocity, the density, the pressure and the eletrostatic
potential are of the form
\begin{equation*}
\begin{aligned}
&{\bf u}({\textbf x})= U_{1}(r,z)\mathbf{e}_r+U_{2}(r,z)\mathbf{e}_\theta
+U_3(r,z)\mathbf{e}_z,\\
&\rho({\textbf x})=\rho(r,z),\ \ P({\textbf x})=P(r,z),\ \ \Phi({\textbf x})=\Phi(r,z).\\
\end{aligned}
\end{equation*}
Then the system \eqref{1-2} reduces to
\begin{eqnarray}\label{2-10}
\begin{cases}
\begin{aligned}
&\partial_r(r\rho U_1)+\p_{z}(r\rho U_3)=0,\\
&\rho\left(U_1\partial_r +U_3\p_{z}\right)U_1+\partial_r P-\frac{\rho U_2^2}{r}=\rho\p_r\Phi,\\
&\rho\left(U_1\partial_r +U_3\p_{z}\right)U_2+\frac{\rho U_1U_2}{r}=0,\\
&\rho\left(U_1\partial_r +U_3\p_{z}\right)U_3+\p_{z}P=\rho\p_{z}\Phi,\\
&\rho\left(U_1\partial_r+U_3\p_{z}\right)K=0,\\
&\bigg(\p_r^2+\frac 1 r\p_r+\p_{z}^2\bigg)\Phi=\rho-b.
\end{aligned}
\end{cases}
\end{eqnarray}
The flow region $\Omega $ is simplified as
 \begin{equation*}
\mn=\{(r,z): r_0<r<r_1, -1<z<1\}.
\end{equation*}
The entrance, exit and  nozzle walls of  $ \mn$ are denoted by
\begin{equation*}
\begin{aligned}
&\Sigma_{en}=\{(r,z):r=r_0,\  -1<z<1\},\\
		&\Sigma_{ex}=\{(r,z):r=r_1,\  -1<z<1\},\\
&\Sigma_{1}^\pm=
		\{(r,z):r_0<r<r_1,\ z=\pm 1\}.
	\end{aligned}
\end{equation*}
\par  For  axisymmetric   flows, we investigate  the following problem.
\begin{problem}\label{probl2}
Given functions $(b^\star, U_{1, en}^\star, U_{2, en}^\star, U_{3, en}^\star,K_{en}^\star,S_{en}^\star ,E_{ en}^\star, \Phi_{ ex}^\star)$  sufficiently close to $(b_0,U_{0},$\\ $0,0,K_0,S_0,E_0,\bar\Phi(r_1))$,
find  a solution
$(\rho, U_1,U_2, U_3,P, \Phi) $  to the   system \eqref{2-10} in $\mn$ satisfying  the following properties.
\begin{enumerate}[ \rm (1)]
 \item
   (Positivity of the density and the radial velocity)
 \begin{equation}\label{1-c-c-a}
 \rho>0  \ \ {\rm{ and}} \ \ U_1>0,   \ \ {\rm{ in}} \ \ \overline{\mn}.
 \end{equation}
\item (The boundary conditions)
\begin{equation}\label{1-c-c}
\begin{cases}
(U_1, U_2,   U_3,K,S,\p_r\Phi)(r_0,z)=(U_{1,en}^\star, U_{2,en}^\star,  U_{3,en}^\star,K_{en}^\star,S_{en}^\star,E_{en}^\star)(z),\ \ &{\rm{on}}\ \ \Sigma_{en},\\
\Phi(r_1,z)=\Phi_{ex}^\star(z), \ \ &{\rm{on}}\ \ \Sigma_{ex},\\
U_3(r,\pm 1)=\p_z\Phi(r,\pm 1)=0, \ \  &{\rm{on}}\ \ \Sigma_{1}^\pm.\\
\end{cases}
\end{equation}
\item   (Supersonic speed)
\begin{equation}\label{1-c-z-a}
U_1^2+U_2^2+U_3^2>\gamma e^{S}{\rho}^{\gamma-1},\ \  {\rm{in}}\ \ \overline{\mn}.
\end{equation} \end{enumerate}
\end{problem}
\par The second main result in this paper states the structural stability of cylindrically  symmetric
supersonic  flows under axisymmetric perturbations of suitable boundary conditions, which also yields the existence and uniqueness of smooth axisymmetric supersonic  rotational flows to the  system \eqref{2-10}.
\begin{theorem}\label{th2}
  For given functions $b^\star\in C^2(\overline{\mn})$, $U_{ 1,en}^\star\in C^3([-1,1])$ with  $\displaystyle{\min_{ [-1,1] }U_{1, en}^\star>0 }$ and $(U_{2, en}^\star,$\\$U_{3, en}^\star,K_{en}^\star,S_{en}^\star,E_{ en}^\star,\Phi_{ ex}^\star)\in \left(C^4([-1,1])\right)^6$, set
\begin{equation}\label{1-9-a-a}
\begin{aligned}
 &\sigma(b^\star,U_{ 1,en}^\star,U_{2, en}^\star,U_{3, en}^\star,K_{en}^\star,S_{en}^\star,E_{ en}^\star,\Phi_{ ex}^\star)
 :=\|b-b_0\|_{C^2(\overline{\mn})}+\|U_{1, en}^\star-U_{0}\|_{C^3([-1,1])}\\
 &\quad+\|(U_{2, en}^\star,U_{3, en}^\star,K_{en}^\star,S_{en}^\star,E_{ en}^\star,\Phi_{ ex}^\star)-(0,0,K_0,S_0,E_0,\bar\Phi(r_1))\|_{C^4([-1,1])}.\\
 \end{aligned}
  \end{equation}
  Let  $(\bar\rho,\bar  U, \bar P, \bar \Phi)$ be the  background supersonic solution defined in Definition \ref{de1}. For each $\epsilon_0\in(0,R_0)$ with $R_0>0$ given in Proposition \ref{pro1}, there exist a constant $\bar{r}_1^\star\in(r_0,r_0+R_0-\epsilon_0]$  so  that for $r_1\in(r_0, \bar{r}_1^\star)$,
 if
$(b^\star,U_{ 1,en}^\star,U_{2, en}^\star,U_{3, en}^\star,K_{en}^\star,S_{en}^\star,E_{ en}^\star,\Phi_{ ex}^\star)$ satisfies
\begin{equation}\label{1-t-1-2}
\sigma(b^\star,U_{ 1,en}^\star,U_{2, en}^\star,U_{3, en}^\star,K_{en}^\star,S_{en}^\star,E_{ en}^\star,\Phi_{ ex}^\star)\leq  \sigma^\star
\end{equation}
 with $\sigma^\star>0$ depending only on  $(b_0, \A_0,U_{0}, S_{0}, E_{0},r_0,r_1,\epsilon_0)$, and
 the compatibility conditions
\begin{equation}  \label{1-t-2-2}
\begin{aligned}
&\p_{z} b^\star=0, \ {\rm{on}} \ \Sigma_{1}^\pm,\ \ \frac{\de U_{1, en}^\star}{\de z}(\pm 1)=\frac{\de^k U_{2, en}^\star}{\de z}(\pm 1)=\frac{\de^{k-1}U_{3, en}^\star}{\de z^{k-1}}(\pm 1)=\frac{\de^k K_{ en}}{\de z^k}(\pm 1)\\
&=\frac{\de^k S_{ en}^\star}{\de z^k}(\pm 1)=\frac{\de^k E_{ en}^\star}{\de z^k}(\pm 1)
=\frac{\de^k \Phi_{ ex}^\star}{\de z^k}(\pm 1)=0, \ \ {\rm{for}} \ \ k=1,3,
\end{aligned}
\end{equation}
 then Problem \ref{probl2} has a   unique smooth   solution with nonzero vorticity  $(U_1,U_3,U_2,K,S,\Phi)\in \left(H^3(\mn)\right)^2$\\$\times \left(H^4(\mn)\right)^4$, which satisfies
 \begin{eqnarray}\label{1-t-3-2}
\|(U_1,U_3)-(\bar U,0)\|_{H^3(\mn)}+\| \Phi-\bar\Phi\|_{H^4(\mn)}
\leq  \mc_1^\star \sigma(b^\star,U_{ 1,en}^\star,U_{2, en}^\star,U_{3, en}^\star,K_{en}^\star,S_{en}^\star,E_{ en}^\star,\Phi_{ ex}^\star),
\end{eqnarray}
and
\begin{eqnarray}\label{1-t-4-2}
 \|(U_2,K,S)-(0,K_0,S_0)\|_{H^4(\mn)}
\leq   \mc_1^\star \|(U_{2, en}^\star,K_{en}^\star,S_{en}^\star)-(0,K_0,S_0)\|_{C^4([-1,1])},
\end{eqnarray}
and
   \begin{equation}\label{1-t-4-aa}
 c^2(K,U_1, U_2, U_3,\Phi)-(U_1^2+U_2^2+U_3^2)
 \leq - \kappa^\star,  \ \ {\rm{in}} \ \ \overline\mn.
\end{equation}
Here $ c(K,U_1,  U_2,U_3,\Phi) $ is the sound speed given by
 \begin{equation*}
c(K,U_1,  U_2,\Phi)=\sqrt{(\gamma-1)\bigg(K+\Phi-\frac12(U_1^2+U_2^2+U_3^2)\bigg)},
\end{equation*}
  and the  positive constants
 $C_1^\star$  and $ \kappa^\star $ depend only on  $(b_0, \A_0,U_{0}, S_{0}, E_{0},r_0,r_1,\epsilon_0)$. Furthermore, the solution $(U_1,U_2,U_3,K,S, \Phi  )$ satisfies the compatibility conditions
\begin{equation}
\label{comp-cond-nlbvp-full-1}
\begin{aligned}
\p_{z} U_1=\p_{z}^k U_2=\p_z^{k-1}U_3=\p_{z}^k K
=\p_{z}^k S=\p_{z}^k\Phi=0, \ \ {\rm{on}} \ \Sigma_{1}^\pm,  \ \  {\rm{for}} \ \ k=1,3
\end{aligned}
\end{equation}
in the sense of trace.
For each $\alpha\in(0,1)$, it follows from \eqref{1-t-3-2}-\eqref{1-t-4-2} that
\begin{eqnarray}\label{1-t-7-2}
\begin{aligned}
  \|(U_1,U_3)-(\bar U,0)\|_{C^{1,\alpha}(\overline{\mn})} + \|\Phi-\bar\Phi\|_{C^{2,\alpha}(\overline{\mn})}
  \leq   \mc_2^\star\sigma(b^\star,U_{ 1,en}^\star,U_{2, en}^\star,U_{3, en}^\star,K_{en}^\star,S_{en}^\star,E_{ en}^\star,\Phi_{ ex}^\star),
  \end{aligned}
  \end{eqnarray}
  and
  \begin{eqnarray}\label{1-t-4-3}
 \begin{aligned}
\|(U_2,K,S)-(0,K_0,S_0)\|_{C^{2,\alpha}(\overline{\mn})}
\leq  \mc_2^\star\|(U_{2, en}^\star,K_{en}^\star,S_{en}^\star)-(0,K_0,S_0)\|_{C^4([-1,1])}
\end{aligned}
  \end{eqnarray}
for some constant $ \mc_2^\star>0$ depending only on $(b_0, \A_0,U_{0}, S_{0}, E_{0},  r_0, r_1,\epsilon_0,\alpha)$.
\end{theorem}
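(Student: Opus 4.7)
The plan is to adapt the deformation-curl-Poisson decomposition introduced in \cite{WX19,WS19} and to solve the resulting nonlinear problem via a two-layer iteration, with the inner layer invoking the linear hyperbolic-elliptic theory developed in Section 3 for Theorem \ref{th1}. Decompose \eqref{2-10} as follows: the continuity equation, together with a Bernoulli-Crocco-type identity derived by taking $\curl$ of the momentum equations, yields a first-order $\div$-$\curl$ system for the poloidal velocity $(U_1,U_3)$, whose symbol is hyperbolic in the supersonic region through the sign of $\bar c^2-|{\bf u}|^2$; the Poisson equation governs $\Phi$; and three transport equations, namely $({\bf u}\cdot\n)(rU_2)=0$, $({\bf u}\cdot\n)K=0$ and $({\bf u}\cdot\n)S=0$, carry the swirl, the Bernoulli function and the entropy along the particle trajectories. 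The angular vorticity component $\omega_2=\p_zU_1-\p_rU_3$ is then recovered algebraically from $(U_2,K,S,\Phi)$ through the Crocco relation; this is precisely where the loss of one derivative appears.

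The second ingredient is the stream-function trick. From $\p_r(r\A U_1)+\p_z(r\A U_3)=0$ define $\w$ by $\p_z\w=r\A U_1$ and $\p_r\w=-r\A U_3$. The transport equations then force $rU_2$, $K$ and $S$ to be constant along streamlines, hence to admit representations $\me(\w)$, $\mk(\w)$, $\ms(\w)$ whose profiles are prescribed by the entrance data $(U_{2,en}^\star,K_{en}^\star,S_{en}^\star)$. Because $\w$ carries one more $(r,z)$-derivative than $(U_1,U_3)$, these compositions automatically gain an order of regularity, which is exactly the asymmetry $H^3$ versus $H^4$ reflected in \eqref{1-t-3-2}-\eqref{1-t-4-2}.

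The iteration is set up as follows. Fix an admissible pair $(\hat U_1,\hat U_3,\hat\Phi)$ close to $(\bar U,0,\bar\Phi)$ in $(H^3)^2\times H^4$, build the associated stream function $\hat\w$, and freeze $(U_2,K,S)=(\me,\mk,\ms)(\hat\w)$ in the deformation-curl-Poisson system. The inner layer solves the resulting linear hyperbolic-elliptic problem: after eliminating one variable the principal part coincides with the second-order mixed-type system analyzed for Theorem \ref{th1}, so the multiplier method of Section 3 yields an $H^1$ energy estimate, while the reflection technique across $\Sigma_1^\pm$, combined with the compatibility conditions \eqref{1-t-2-2}, upgrades this to an $H^3$ estimate for $(U_1,U_3)$ and an $H^4$ estimate for $\Phi$. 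Together with the one-higher-order estimate for $(U_2,K,S)$ gained from the stream-function representation, this produces a self-map on a closed ball in $(H^3)^2\times(H^4)^4$; choosing $\bar r_1^\star$ small (to ensure the linear problem is well-posed uniformly on $(r_0,\bar r_1^\star)$) and $\sigma^\star$ small accordingly makes the outer map a contraction, whose fixed point is the unique solution. The supersonic lower bound \eqref{1-t-4-aa} then follows from \eqref{1-9} applied to the background plus smallness of the perturbation, and \eqref{1-t-7-2}-\eqref{1-t-4-3} follow by the Sobolev embeddings $H^3(\mn)\hookrightarrow C^{1,\alpha}(\overline{\mn})$ and $H^4(\mn)\hookrightarrow C^{2,\alpha}(\overline{\mn})$ in the planar domain $\mn$.

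The central obstacle, and the place where the whole scheme must be designed, is the derivative loss coming from the algebraic recovery of $\omega_2$: estimating $(U_1,U_3)$ in $H^3$ forces the right-hand side of the curl equation to be in $H^3$, which would naively require $(U_2,K,S)\in H^4$, a regularity that the transport equations alone cannot supply since their characteristics are only as smooth as $(U_1,U_3)$. The stream-function representation exactly closes this gap, because $\w$ enjoys one extra derivative through the continuity equation, so $\mf(\w)\in H^4$ whenever $\mf\in C^4([-1,1])$ and $\w\in H^4$. A secondary technical difficulty is propagating the long list of boundary compatibility conditions \eqref{1-t-2-2} through the iteration so that the reflection argument remains legitimate in the cornered domain $\mn$; this is handled by choosing even or odd extensions individually adapted to each unknown, just as in the proof of Theorem \ref{th1}.
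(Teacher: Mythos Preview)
Your proposal is correct and follows essentially the same strategy as the paper: the deformation--curl--Poisson decomposition of \cite{WX19,WS19}, the stream-function representation to regain the lost derivative for $(U_2,K,S)$, a two-layer fixed-point scheme, and the Section~\ref{irrotational} linear theory (multiplier estimate plus reflection) for the hyperbolic--elliptic core. The only minor difference is the ordering of the layers: the paper runs the \emph{outer} iteration on ${\bf W}=(U_2,K,S)$ and the \emph{inner} one on ${\bf V}=(U_1,U_3,\Phi)$, so that the inner fixed point already satisfies the continuity equation and the stream function $\mathscr{L}$ is genuinely well-defined when it is built, whereas your description fixes $(\hat U_1,\hat U_3,\hat\Phi)$ first; both orderings close, but the paper's makes the stream-function step cleaner.
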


\begin{remark}
{\it Theorems \ref{th1} and \ref{th2} state the existence and uniqueness of
three-dimensional smooth supersonic  irrotational flows and smooth axisymmetric supersonic rotational flows, respectively.  Our ultimate goal is  to investigate three-dimensional supersonic  flow  with
nonzero vorticity for the steady Euler-Poisson system in the cylindrical nozzle $\m$. By the deformation-curl-Poisson decomposition developed in \cite{WX19,WS19}, the system \eqref{1-2} is equivalent to the system
\begin{equation}\label{dc}\begin{cases}
\begin{aligned}
&(c^2-U_1^2)\p_r U_1 + \frac{c^2-U_2^2}{r}\p_{\theta} U_2
+ (c^2-U_3^2)\p_{z} U_3 + \frac{c^2 U_1}{r}+\bigg( U_1\p_r+\frac{U_2}{r}\p_\th + U_3 \p_{z} \bigg)\Phi\\
&=U_1(U_2\p_r U_2+ U_3 \p_r U_3)
 + U_2\bigg(\frac{U_1}{r}\p_{\theta} U_1+\frac{U_3}{r}\p_{\theta} U_3\bigg)+ U_3 (U_1\p_{z} U_1+U_2 \p_{z} U_2),\\
&\frac{1}{r}\p_{\theta} U_3- \p_{z} U_2=\omega_1,\\
&\p_{z} U_1- \p_r U_3=\omega_2=\frac{1}{U_1}\bigg(U_2\omega_1+\p_{z}K
		-\frac{e^S\rho^{\gamma-1}}{\gamma-1}\p_{z}S\bigg),\\
&\p_r U_2- \frac{1}{r}\p_\theta U_1 + \frac{U_2}{r}=\omega_3=\frac{1}{U_1}\bigg(U_3\omega_1-\frac{1}{r}\p_{\theta}K
		+\frac{e^S\rho^{\gamma-1}}{\gamma-1}\frac{1}{r}\p_{\theta}S\bigg),\\
&\bigg(\p_r^2+\frac 1 r\p_r+\frac{1}{r^2}\p_{\th}^2+\p_z^2\bigg)\Phi=\rho-b,
\end{aligned}
\end{cases}
\end{equation}
where $\omega_1$ is solved by the transport equation
\begin{equation}\label{dc-1}\begin{aligned}
&\bigg(
	\p_r+\frac{U_2}{rU_1}\p_{\theta}+\frac{U_3}{U_1}\p_{z}
	\bigg)\omega_1
	+\bigg(
	\frac{1}{r}+\frac{1}{r}\p_{\theta}\bigg(\frac{U_2}{U_1}\bigg)+\p_{z}\bigg
(\frac{U_3}{U_1}\bigg)
	\bigg)\omega_1
	+\frac{1}{r}\p_{\theta}\bigg(\frac{1}{U_1}\bigg)\p_{z}K
	-\p_{z}\bigg(\frac{1}{U_1}\bigg)\frac{1}{r}\p_{\theta}K\\
&-\frac{1}{\gamma-1}
\frac{e^S}{r}\p_{\theta}\bigg(\frac{\rho^{\gamma-1}}{U_1}\bigg)\p_{z}S
+\frac{1}{\gamma-1}\p_{z}\bigg(\frac{\rho^{\gamma-1}}{U_1}\bigg)\frac{e^S}{r}\p_{\theta}S=0.
\end{aligned}
\end{equation}
For three-dimensional supersonic  rotational flows, if one looks for a solution $(U_1,U_2,U_3, K,S,\Phi)$ in $\left(H^3(\Omega)\right)^5\times H^4(\Omega)$,  the source terms in the curl system belong to $H^2(\Omega)$, one thus can derive only the $H^2(\Omega)$ norm estimate for the velocity due to the type  of the deformation-curl-Poisson  system in the supersonic region. Thus there is a possible loss of derivatives. This is indeed an essential difficulty. 
For three-dimensional axisymmetric rotational flows, this difficulty can be overcome  by utilizing the one order higher regularity of the stream
function and the fact that the Bernoulli's quantity and the entropy can be represented as  functions
of the stream function.  However, in the three-dimensional case, such a stream
function formulation is unavailable in general, how to overcome this difficult issue will be investigated
in the future.  }
\end{remark}
\section{The stability analysis within  irrotational flows}\label{irrotational}\noindent
\par In this section, we derive a second order linear hyperbolic-elliptic coupled system and  establish a priori estimates
 to obtain the well-posedness of the linearized problem. Then we prove Theorem \ref{th1}.
 \subsection{The linearized problem }\noindent
 \par The first equation in \eqref{2-9} can be rewritten as
\begin{eqnarray}\label{3-1}
 \begin{aligned}
 &\bigg(c^2(\Phi,\n \phi)-(\p_r\phi)^2\bigg)\p_r^2\phi+\frac{1}{r^2}
 \bigg(c^2(\Phi,\n \phi)-\frac{(\p_\th\phi)^2}
{r^2}\bigg)\p_\th^2\phi+ \bigg(c^2(\Phi,\n \phi)-(\p_z\phi)^2\bigg)\p_z^2\phi
-\frac{\p_r\phi\p_\th\phi}{r^2}\p_{r\th}^2\phi\\
&+\frac{\p_r\phi(\p_\th\phi)^2}{r^3}
-\p_r\phi\p_z\phi\p_{rz}^2\phi+\frac{c^2(\rho) \p_r\phi}{r}-\frac{\p_r\phi\p_\th\phi}{r^2}\partial_{\theta r}^2 \phi-\frac{\p_{\th}\phi\p_z\phi}{r^2} \partial_{\theta z}^2 \phi-\p_r\phi \p_z\phi \partial_{zr}^2\phi\\
&-\frac{\p_{\th}\phi\p_z\phi}{r^2}  \partial_{z\th}^2 \phi+\p_r\phi\p_r\Phi+\frac{\p_\th\phi\p_\th\Phi}{r^2}+\p_z\phi\p_z\Phi=0, \ \ {\rm{in}}\ \ \Omega.\\
\end{aligned}
  \end{eqnarray}
  For the  background solution,  $\bar\phi(r)=\int_{r_0}^r \bar U(s)\de s$ solves
\begin{eqnarray}\label{3-2}
(\bar c^2-(\p_{r} \bar{\phi})^2)\p_{r}^2 \bar{\phi}+\frac{\bar c^2 \p_r\bar \phi}{r}+\p_r\bar\phi\p_r\bar\Phi =0, \ \ {\rm{in}}\ \ \Omega.
 \end{eqnarray}
\par   Set
 \begin{eqnarray*}
 \psi(r,\th,z)=\phi(r,\th,z)-\bar\phi(r), \quad \Psi(r,\th,z)=\Phi(r,\th,z)-\bar \Phi(r), \ \  (r,\th,z)\in \Omega.
 \end{eqnarray*}
 Then $ (\psi,\Psi ) $ satisfies
 \begin{equation}\label{3-3}
 \begin{cases}
  A_{11}(r,\th,z,\n\psi,\Psi)\p_r^2\psi+\left(A_{12}(r,\th,z,\n\psi)
  +A_{21}(r,\th,z,\n\psi)\right)
 \p_{r\th}^2\psi\\
 +
 \left(A_{13}(r,\th,z,\n\psi)
+A_{31}(r,\th,z,\n\psi)\right)\p_{rz}^2\psi+  A_{22}(r,\th,z,\n\psi,\Psi)\p_{\th}^2\psi\\
+\left(A_{23}(r,\th,z,\n\psi)
+A_{32}(r,\th,z,\n\psi)\right)\p_{\th z}^2\psi
+A_{33}(r,\th,z,\n\psi,\Psi)\p_{z}^2\psi\\
+\x a_1(r)\p_r\psi+\x b_1(r)\p_r\Psi+\x b_2(r)\Psi=F_1(r,\th,z,
 \n\psi,\n\Psi,\Psi),\ \ &{\rm{in}}\ \ \Omega,\\
 \left(\p_r^2+\frac 1 r\p_r+\frac{1}{r^2}\p_{\th}^2+\p_z^2\right)\Psi+\x a_2(r)\p_r\psi-\x b_3(r)\Psi=F_2(r,\th,z,
 \n\psi,\Psi),\ \ &{\rm{in}}\ \ \Omega,\\
 \psi(r_0,\th,z)=0, \  (\p_r\psi, \p_r\Psi)(r_0,\th,z)=(U_{1,en}^\ast, E_{en}^\ast)(\th,z)-(U_0,E_0),\ \ &{\rm{on}}\ \ \Gamma_{en},\\
\Psi(r_1,\th,z)=\Phi_{ex}^\ast(\th,z)-\bar \Phi(r_1), \ \ &{\rm{on}}\ \ \Gamma_{ex},\\
\p_\th\psi(r,\pm\th_0,z)=\p_\th\Psi(r,\pm\th_0,z)=0, \ \ &{\rm{on}}\ \ \Gamma_{\th_0}^\pm,\\
\p_z\psi(r,\th,\pm 1)=\p_z\Psi(r,\th,\pm 1)=0, \ \ &{\rm{on}}\ \ \Gamma_{1}^\pm,\\
\end{cases}
 \end{equation}
 where
 \begin{equation*}
 \begin{aligned}
&A_{11}(r,\th,z,\n\psi,\Psi)=c^2(\Psi+\bar \Phi,\n \psi+\n \bar\phi)-(\p_r\psi+\bar U)^2, \\ &A_{12}(r,\th,z,\n\psi)=A_{21}(r,\th,z,\n\psi)=-\frac{(\p_r\psi+\bar U)\p_\th\psi}{r^2}, \\
&A_{13}(r,\th,z,\n\psi)=A_{31}(r,\th,z,\n\psi)=-(\p_r\psi+\bar U)\p_z\psi,\\
&A_{22}(r,\th,z,\n\psi,\Psi)=\frac{1}{r^2}
 \bigg(c^2(\Psi+\bar \Phi,\n \psi+\n \bar\phi)-\frac{(\p_\th\psi)^2}{r^2}\bigg), \\
&A_{23}(r,\th,z,\n\psi)=A_{32}(r,\th,z,\n\psi) =-\frac{\p_\th\psi\p_z\psi}{r^2}, \\
&A_{33}(r,\th,z,\n\psi,\Psi)=c^2(\Psi+\bar \Phi,\n \psi+\n \bar\phi)-(\p_z\psi)^2
 ,\\
 &\x a_1(r)=\frac{\bar c^2-(\gamma-1)\bar U^2}{r}-{(\gamma+1)(\bar U\bar U')}+{\bar E},\ \
 \x a_2(r)=\frac{\bar \rho\x U}{\bar c^2},\\
&\x b_1(r)={\bar U},\ \ \x b_2(r)={(\gamma-1)\bar U'}+\frac{(\gamma-1)\bar U  }{r}
, \ \  \x b_3(r)=\frac{\bar \rho}{\bar c^2},\\
&F_1(r,\th,z,
 \n\psi,\n\Psi,\Psi)=-\frac{\p_r\psi}r(c^2(\Phi,\n \phi)-\bar c^2)
 +\bigg(\frac{\gamma-1}2\bigg(\frac{(\p_\th\psi)^2}{r^2}+(\p_z\psi)^2\bigg)
 \frac{\gamma+1}2(\p_r\psi)^2\bigg)\bar U'\\
&\quad +\frac{\gamma-1}{2r}\bigg(\p_r\psi)^2
 \frac{(\p_\th\psi)^2}
{r^2}+(\p_z\psi)^2
\bigg)\bar U-\frac{\p_r\psi(\p_\th\psi)^2}{r^3}-
\p_r\psi\p_r\Psi-\frac{\p_\th\psi\p_\th\Psi}{r^2}+\p_z\psi\p_z\Psi,\\
&F_2(r,\th,z,
 \n\psi,\Psi)=\rho-\bar \rho+\x a_2\p_r\psi-\x b_3\Psi-(b^\ast-b_0).
\end{aligned}
 \end{equation*}
\par  Define the iteration set
\begin{equation}\label{3-4}
\begin{aligned}
  \ma_{\delta^\ast, r_1}&=\bigg\{(\psi,\Psi)\in \left(H^4(\m)\right)^2:\,\,\|(\psi,\Psi)\|_{H^4(\m)}\leq \delta^\ast,
    \p_{\th}^k \psi=\p_{\th}^k \Psi=0, \ \ {\rm{on}} \ \Gamma_{\th_0}^\pm,\\
    &\quad\ \ \ \p_{z}^k \psi=\p_{z}^k \Psi=0, \ \ {\rm{on}} \ \Gamma_{1}^\pm, \ \  {\rm{for}} \ \ k=1,3\bigg\},\\
   \end{aligned}
  \end{equation}
  where the positive constants $\delta^\ast$ and $r_1$ will be determined   later.
  For any given  $ (\e\psi,\e\Psi)\in \ma_{\delta^\ast, r_1} $,  we  consider the following
linearized problem:
  \begin{equation}\label{3-5}
 \begin{cases}
 \e L_1(\psi,\Psi)= A_{11}(r,\th,z,\n\e\psi,\e\Psi)\p_r^2\psi
+2A_{12}(r,\th,z,\n\e\psi)\p_{r\th}^2\psi\\
 \qquad\qquad\ \ +2A_{13}(r,\th,z,\n\e\psi)\p_{rz}^2\psi
 +  A_{22}(r,\th,z,\n\e\psi,\e\Psi)\p_{\th}^2\psi\\
\qquad\qquad\ \ +2A_{23}(r,\th,z,\n\e\psi)\p_{\th z}^2\psi +A_{33}(r,\th,z,\n\e\psi,\e\Psi)\p_{z}^2\psi\\
\qquad\qquad\ \ +\x a_1(r)\p_r\psi+\x b_1(r)\p_r\Psi+\x b_2(r)\Psi\\
\qquad\qquad\ \ =F_1(r,\th,z,
 \n\e\psi,\n\e\Psi,\e\Psi),\ \ &{\rm{in}}\ \ \Omega,\\
\e L_2(\psi,\Psi)= \left(\p_r^2+\frac 1 r\p_r+\frac{1}{r^2}\p_{\th}^2+\p_z^2\right)\Psi+\x a_2(r)\p_r\psi-\x b_3(r)\Psi\\
\qquad\quad \ \ \ =F_2(r,\th,z,
 \n\e\psi,\e\Psi),\ \ &{\rm{in}}\ \ \Omega,\\
 \psi(r_0,\th,z)=0, \ (\p_r\psi, \p_r\Psi)(r_0,\th,z)=(U_{1,en}^\ast, E_{en}^\ast)(\th,z)-(U_0,E_0),\ \ &{\rm{on}}\ \ \Gamma_{en},\\
\Psi(r_1,\th,z)=\Phi_{ex}^\ast(\th,z)-\bar \Phi(r_1), \ \ &{\rm{on}}\ \ \Gamma_{ex},\\
\p_\th\psi(r,\pm\th_0,z)=\p_\th\Psi(r,\pm\th_0,z)=0, \ \ &{\rm{on}}\ \ \Gamma_{\th_0}^\pm,\\
\p_z\psi(r,\th,\pm 1)=\p_z\Psi(r,\th,\pm 1)=0, \ \ &{\rm{on}}\ \ \Gamma_{1}^\pm.\\
 \end{cases}
 \end{equation}
 Set
  \begin{eqnarray*}
  \begin{aligned}
 &\h\Psi(r,\th,z)=\Psi(r,\th,z)-\Psi_{bd}(r,\th,z),  \ \ (r,\th,z)\in \Omega,
 \end{aligned}
 \end{eqnarray*}
 where
 \begin{eqnarray*}
  \Psi_{bd}(r,\th,z)=(r-r_1)(E_{en}^\ast(\th,z)-E_0)-(\Phi_{ex}^\ast(\th,z)-\bar\Phi(r_1)),\ \ (r,\th,z)\in \Omega.
 \end{eqnarray*}
  Then $(\psi,\h\Psi) $ satisfies
 \begin{equation}\label{3-6}
 \begin{cases}
\e L_1(\psi,\h\Psi)= A_{11}(r,\th,z,\n\e\psi,\e\Psi)\p_r^2 \psi+2A_{12}(r,\th,z,\n\e\psi)\p_{r\th}^2 \psi\\
\qquad\qquad\ \ +
 2A_{13}(r,\th,z,\n\e\psi)\p_{rz}^2 \psi
 +  A_{22}(r,\th,z,\n\e\psi)\p_{\th}^2 \psi\\
\qquad\qquad\ \ +2A_{23}(r,\th,z,\n\e\psi)\p_{\th z}^2 \psi +A_{33}(r,\th,z,\n\e\psi,\e\Psi)\p_{z}^2 \psi\\
\qquad\qquad\ \ +\x a_1(r)\p_r\psi+\x b_1(r)\p_r\h\Psi+\x b_2(r)\h\Psi\\
\qquad\qquad\ \ =F_3(r,\th,z,
 \n\e\psi,\n\e\Psi,\e\Psi),\ \ &{\rm{in}}\ \ \Omega,\\
\e L_2(\psi,\h\Psi)= \left(\p_r^2+\frac 1 r\p_r+\frac{1}{r^2}\p_{\th}^2+\p_z^2\right) \h\Psi+\x a_2(r)\p_r \psi-\x b_3(r) \h\Psi\\
\qquad\quad \ \ \ =F_4(r,\th,z,
 \n\e\psi,\e\Psi),\ \ &{\rm{in}}\ \ \Omega,\\
 \p_r\psi(r_0,\th, z)=F_5(\th, z),\ \psi(r_0,\th, z)=\p_r\h\Psi(r_0,\th, z)=0,\ \ &{\rm{on}}\ \ \Gamma_{en},\\
\h\Psi(r_1,\th,z)=0, \ \ &{\rm{on}}\ \ \Gamma_{ex},\\
\p_\th\psi(r,\pm\th_0,z)=\p_\th\h\Psi(r,\pm\th_0,z)=0, \ \ &{\rm{on}}\ \ \Gamma_{\th_0}^\pm,\\
\p_z\psi(r,\th,\pm 1)=\p_z\h\Psi(r,\th,\pm 1)=0, \ \ &{\rm{on}}\ \ \Gamma_{1}^\pm,\\
 \end{cases}
 \end{equation}
 where
 \begin{equation*}
 \begin{aligned}
 &  F_3(r,\th,z,
 \n\e\psi,\n\e\Psi,\e\Psi)= F_1(r,\th,z,
 \n\e\psi,\n\e\Psi,\e\Psi)-\e L_1(0,\Psi_{bd}),\ \ (r,\th,z)\in \Omega,\\
 &  F_4(r,\th,z,
 \n\e\psi,\e\Psi)= F_2(r,\th,z,
 \n\e\psi,\e\Psi)-\e L_2(0,\Psi_{bd}), \qquad \qquad \ (r,\th,z)\in \Omega,\\
 &F_5(\th,z)=U_{1,en}^\ast(\th,z)-U_0, \qquad \qquad\qquad\qquad  \qquad \qquad \qquad \ (\th,z)\in D.
 \end{aligned}
 \end{equation*}
 It   can be directly checked that there exists a constant $C>0$ depending only on $(b_0, \A_0, U_{0},S_{0},E_{0})$ such that
 \begin{equation}\label{3-45-e}
 \begin{aligned}
 &\|  F_3(\cdot,
 \n\e\psi,\n\e\Psi,\e\Psi)\|_{H^3(\m)}+\|  F_4(\cdot,
 \n\e\psi,\e\Psi)\|_{H^2(\m)}+\|F_5\|_{H^3(\overline D)}\\
 &\leq C\left(\|  F_1(\cdot,
 \n\e\psi,\n\e\Psi,\e\Psi)\|_{H^3(\m)}+\|  F_2(\cdot,
 \n\e\psi,\e\Psi)\|_{H^2(\m)}+\sigma(b^\ast,U_{ 1,en}^\ast,E_{ en}^\ast,\Phi_{ ex}^\ast)\right)\\
 &\leq C \left(\Vert(\tilde\psi,\tilde\Psi)\Vert^2_{H^4(\Omega)}
 +\sigma(b^\ast,U_{ 1,en}^\ast,E_{ en}^\ast,\Phi_{ ex}^\ast)\right)\leq
 C\left((\delta^\ast)^2+\sigma(b^\ast,U_{ 1,en}^\ast,E_{ en}^\ast,\Phi_{ ex}^\ast)\right).
 \end{aligned}
 \end{equation}
 \par For a fixed  $ (\e\psi,\e\Psi)\in \ma_{\delta^\ast, r_1} $, denote
  \begin{equation*}
  \begin{aligned}
&\e A_{ii}(r,\th,z):=A_{ii}(r,\th,z,\n\e\psi,\e\Psi),\quad \quad  (r,\th,z)\in \Omega, \  i=1,2,3, \\
&\e A_{ij}(r,\th,z):=A_{ij}(r,\th,z,\n\e\psi),\quad\quad\ \ \ \ (r,\th,z)\in \Omega, \  i\neq j =1,2,3, \\
 &\e F_3(r,\th,z):=F_3 (r,\th,z,\n\e\psi,\n\e\Psi,\e\Psi),\    (r,\th,z)\in \Omega, \\
 &\e F_4(r,\th,z):=F_4 (r,\th,z,\n\e\psi,\e\Psi),\quad\quad    (r,\th,z)\in \Omega.
 \end{aligned}
 \end{equation*}
   Then we have the following proposition.
\begin{proposition}\label{pro2}
 Let  $R_0$ be given by  Proposition \ref{pro1}.
\begin{enumerate}[ \rm (i)]
\item For any $r_1\in(r_0,r_0+R_0)$, set
\begin{eqnarray*}
\begin{aligned}
\bar A_{11}(r)=\bar c^2-{\bar U^2}, \  \
\bar A_{22}(r)=\frac{\bar c^2}{r^2}, \  \
\bar A_{33}(r)={\bar c^2},\ \ \bar A_{12}(r)=
\bar A_{13}(r)=\bar A_{23}(r)=0, \ \    r\in [r_0, r_1].
\end{aligned}
\end{eqnarray*}
 Then there exists a constant $\bar\mu_1\in(0,1)$ depending only on $(b_0, \A_0,U_{0}, S_{0},E_{0})$ such that
\begin{eqnarray}\label{2-7-a}
&&\bar\mu_1\leq-\bar A_{11}(r)\leq \frac{1}{\bar\mu_1},\ \ \bar\mu_1\leq\bar A_{ii}(r)\leq \frac{1}{\bar\mu_1}, \ i=2,3,
\ \  r\in [r_0, r_1] .
\end{eqnarray}
Furthermore, the coefficients $\bar A_{ii}$,   $\x b_i $ and $\x a_j  $ for $ i=1,2,3 $ and  $j=1,2$  are smooth functions. More precisely, for each $k\in\mathbb{Z}^+$, there exists a constant $\bar C_k>0$ depending only on $(b_0, \A_0,U_{0}, S_{0}, E_{0},r_0,r_1,k)$ such that
\begin{equation}\label{2-7-b}
\Vert (\bar A_{11},\bar A_{22},\bar A_{33}, \x a_1,\x a_2, \x b_1, \x b_2,\x b_3)\Vert_{C^k([r_0,r_1])}\leq\bar C_k.
\end{equation}
\item For each $\epsilon_0\in(0, R_0)$,
there exists a positive constant $\delta_0^\ast$ depending only on  $(b_0,   \A_0, U_{0},S_{0},E_{0},\epsilon_0)$ such that   for $r_1\in(r_0,r_0+R_0-\epsilon_0]$ and $\delta^\ast\leq \delta_0^\ast$, the coefficients $ (\e A_{11}, \e A_{12},\e A_{13},\e A_{22},\e A_{23},\e A_{33}) $ for $ (\e\psi,\e\Psi)\in \ma_{\delta^\ast, r_1}  $ with $\ma_{\delta^\ast, r_1} $    given by \eqref{3-4}  satisfy the following
properties.
\begin{enumerate}[ \rm (a)]
 \item There exists a positive constant  $C$ depending only on $(b_0, \A_0, U_{0},S_{0},E_{0},\epsilon_0)$ such that
 \begin{eqnarray}\label{2-7-f}
 \|(\e A_{11}, \e A_{12},\e A_{13},\e A_{22},\e A_{23},\e A_{33})-(\bar A_{11}, 0,0,\bar A_{22},0,\bar A_{33})\|_{H^3(\m)}\leq C\delta^\ast.
\end{eqnarray}
\item The coefficients $ (\e A_{11}, \e A_{12},\e A_{13},\e A_{22},\e A_{23},\e A_{33}) $ satisfy the following compatibility conditions:
\begin{eqnarray}\label{2-7-f-1}
\begin{cases}
\begin{aligned}
&\e A_{12}=\e A_{23}=\p_{\th}\e A_{11}=\p_{\th}\e A_{13}=\p_{\th}\e A_{22}=\p_{\th}\e A_{33} =0, \ \ {\rm{on}} \ \Gamma_{\th_0}^\pm,\\
  &\e A_{13}=\e A_{23}=\p_{z}\e A_{11}=\p_{z}\e A_{12}=\p_{z}\e A_{22}=\p_{z}\e A_{33} =0, \ \ \ {\rm{on}} \ \Gamma_{1}^\pm.
  \end{aligned}
  \end{cases}
  \end{eqnarray}

\item
   There exists a constant  $\mu_1\in (0,1)$  depending only on $(b_0,   \A_0, U_{0},S_{0},E_{0},\epsilon_0)$ so that  the coefficients $ (\e A_{11},\e A_{22},\e A_{33}) $ satisfy
 \begin{equation}\label{2-7-e}
{\mu_1}\leq -\e A_{11}(r,\th,z)\leq\frac1{\mu_1}, \ \ {\mu_1}\leq \e A_{ii}(r,\th,z)\leq\frac1{\mu_1} , \ i=2,3,  \ \ (r,\th,z)\in\overline{\m} .
\end{equation}
 Furthermore, The matrix $[\e A_{ij}]_{i,j=2}^3 $ is positive definite in  $\overline{\m} $ with
\begin{equation}\label{2-7-e-e}
\frac{\mu_1}2(\xi_2^2+\xi_3^2)\leq \sum_{i,j=2}^3 \e A_{ij}\xi_i \xi_j \leq \frac2{\mu_1}(\xi_2^2+\xi_3^2),
\ \ \forall (\xi_2,\xi_3)\in \mathbb{R}^2.
\end{equation}
\end{enumerate}
\end{enumerate}
\end{proposition}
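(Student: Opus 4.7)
The plan is to verify the three assertions of Proposition~\ref{pro2} in order, combining the quantitative properties of the background solution supplied by Proposition~\ref{pro1} with perturbation estimates based on Sobolev embeddings on the three-dimensional domain $\m$.

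For part (i), I would first invoke Proposition~\ref{pro1}: the background is smooth on $[r_0, r_1]\subset[r_0,r_0+R_0)$ and satisfies the strict supersonic bound $\bar U^2(r)>\bar c^2(r)$ together with $U_\sharp<\bar U(r)<U^\sharp$, while $\bar\rho(r)=J_0/(r\bar U(r))$ remains uniformly positive. Since all these functions are continuous on the compact interval $[r_0,r_1]$, the quantity $-\bar A_{11}=\bar U^2-\bar c^2$ attains a strictly positive minimum, and $\bar A_{22}=\bar c^2/r^2$, $\bar A_{33}=\bar c^2$ inherit two-sided positive bounds; taking $\bar\mu_1$ to be the smaller of these minima (and their reciprocals) establishes \eqref{2-7-a}. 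The $C^k$ estimate \eqref{2-7-b} then follows by iteratively differentiating the reduced ODE system \eqref{1-5} to produce $\bar U,\bar E,\bar\rho\in C^\infty([r_0,r_1])$; the explicit algebraic formulas for $\x a_1,\x a_2,\x b_1,\x b_2,\x b_3$ (which involve only smooth combinations together with $1/r$ on a set bounded away from $0$) then yield $C^k$ bounds depending only on $(b_0,\A_0,U_0,S_0,E_0,r_0,r_1,k)$.

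For part (ii)(a), the coefficients $A_{ij}(r,\th,z,\n\e\psi,\e\Psi)$ are smooth functions of their arguments, built from polynomials in $\n\e\psi$ together with the composition with the smooth map $(\e\Psi,\n\e\psi)\mapsto c^2(\e\Psi+\bar\Phi,\n\e\psi+\n\bar\phi)$. Since $\m\subset\mathbb{R}^3$, one has the embedding $H^3(\m)\hookrightarrow C^{1,\alpha}(\overline\m)$ and $H^3(\m)$ is a Banach algebra, so for $\|(\e\psi,\e\Psi)\|_{H^4(\m)}\leq\delta^*\leq\delta_0^*$ chosen sufficiently small, each argument $\n\e\psi,\e\Psi$ stays in a fixed $C(\overline\m)$-neighborhood of the background values on which the nonlinearities are uniformly smooth. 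Taylor-expanding each $A_{ij}$ around $(\n\bar\phi,0)$ expresses $\e A_{ij}-\bar A_{ij}$ as a finite sum of products in which at least one factor is a component of $(\n\e\psi-\n\bar\phi,\e\Psi)$ with $H^3(\m)$-norm bounded by $C\delta^*$, while the remaining factors are uniformly bounded in $H^3(\m)$; the Banach-algebra property then gives \eqref{2-7-f}.

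For part (ii)(b), I would argue by direct inspection. On $\Gamma_{\th_0}^\pm$, the iteration-set conditions give $\p_\th\e\psi=\p_\th\e\Psi=0$, and since $\Gamma_{\th_0}^\pm$ is parameterized by $(r,z)$, the tangential derivatives $\p_r(\p_\th\e\psi),\p_z(\p_\th\e\psi)$ and the analogous derivatives of $\p_\th\e\Psi$ also vanish there. Because $\bar U,\bar\Phi$ are $\th$-independent, $\e A_{12}=-(\p_r\e\psi+\bar U)\p_\th\e\psi/r^2$ and $\e A_{23}=-\p_\th\e\psi\,\p_z\e\psi/r^2$ vanish on $\Gamma_{\th_0}^\pm$, and a chain-rule expansion of $\p_\th\e A_{11},\p_\th\e A_{13},\p_\th\e A_{22},\p_\th\e A_{33}$ shows that each term contains at least one of $\p_\th\e\psi,\p_\th\e\Psi,\p_r\p_\th\e\psi,\p_z\p_\th\e\psi$ as a factor, each of which vanishes on $\Gamma_{\th_0}^\pm$; the $\Gamma_1^\pm$ identities follow by the same reasoning with $\p_z$ in place of $\p_\th$. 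For part (ii)(c), the embedding $H^3(\m)\hookrightarrow C(\overline\m)$ combined with \eqref{2-7-f} yields $\|\e A_{ij}-\bar A_{ij}\|_{C(\overline\m)}\leq C\delta^*$; shrinking $\delta_0^*$ so this is at most $\bar\mu_1/2$ and taking $\mu_1:=\bar\mu_1/2$ gives \eqref{2-7-e}, after which a standard symmetric-matrix perturbation bound applied to $[\e A_{ij}]_{i,j=2}^3$ produces \eqref{2-7-e-e}. The most delicate bookkeeping is part (ii)(b), where one must carefully track how tangential differentiation of the nonlinear dependence on $(\n\e\psi,\e\Psi)$ interacts with the cylindrical symmetry of the background and the Neumann-type traces encoded in $\ma_{\delta^*,r_1}$.
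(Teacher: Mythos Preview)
Your proposal is correct and supplies exactly the kind of routine verification the paper has in mind; in fact the paper states Proposition~\ref{pro2} without proof, treating it as a direct consequence of Proposition~\ref{pro1}, the explicit formulas for the $A_{ij}$, and the compatibility conditions built into $\ma_{\delta^\ast,r_1}$. Your handling of each part---compactness plus the strict supersonic bound for (i), the $H^3$ Banach-algebra/Sobolev-embedding argument for (ii)(a), the tangential-derivative bookkeeping for (ii)(b), and the $C^0$-perturbation for (ii)(c)---is the natural way to fill in these details and matches how the estimates are subsequently used (e.g.\ \eqref{2-7-f-p} and the reflection arguments in Lemma~\ref{pro5}).
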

\subsection{Energy estimates  for the linearized problem }\noindent
\par In this subsection, we  derive the energy estimates for the problem   \eqref{3-6} under the assumptions that $(\e A_{11}, \e A_{12},\e A_{13},\e A_{22},\e A_{23},\e A_{33})\in \left(C^{\infty}(\overline{\Omega})\right)^6$ and   \eqref{2-7-f}-\eqref{2-7-e-e}  hold.
\begin{lemma}\label{pro4}
 Let   $R_0 $ and  $ \delta_0^\ast$  be from  Proposition \ref{pro1} and Proposition \ref{pro2}, respectively.
  For each $\epsilon_0\in(0,R_0)$, there exists a constant $\bar r_1^\ast\in(r_0,r_0+R_0-\epsilon_0]$ depending only $(b_0,   \A_0, U_{0},S_{0},E_{0},\epsilon_0)$ and a sufficiently small constant  $ \delta_1^\ast\in(0,\frac{\delta_0^\ast}2]$
such that for  $r_1\in(r_0,\bar r_1^\ast)$ and $\delta^\ast\leq \delta_1^\ast$, the classical  solution $ (\psi,\h\Psi) $ to  \eqref{3-6} satisfies  the energy estimate
\begin{equation}\label{3-8}
\|(\psi,\h\Psi)\|_{H^1(\m)}\leq C\left(\|\e F_3\|_{L^2(\m)}+\|\e F_4\|_{L^2(\m)}+\|F_5\|_{L^2(\overline D)}\right),
\end{equation}
 where the constant   $C>0$ depends only on $(b_0,   \A_0, U_{0},S_{0},E_{0},\th_0,r_0,r_1,\epsilon_0)$.
\end{lemma}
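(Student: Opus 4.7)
I plan to prove \eqref{3-8} by combining a weighted multiplier argument for the hyperbolic equation in $\psi$ with a standard Dirichlet--Neumann $H^1$ estimate for the elliptic equation in $\h\Psi$, closed through the smallness of $r_1-r_0$ and of $\delta^\ast$. Writing $\e A_{ij} = \bar A_{ij} + (\e A_{ij}-\bar A_{ij})$, the bound \myref{2-7-f} together with Sobolev embedding yields $\|\e A_{ij}-\bar A_{ij}\|_{W^{1,\infty}(\m)}\leq C\delta^\ast$, so the principal part of $\e L_1$ is a small perturbation of the diagonal background operator $\bar A_{11}\p_r^2 + \bar A_{22}\p_\th^2 + \bar A_{33}\p_z^2$, with $\bar A_{11}<0$ and $\bar A_{22},\bar A_{33}>0$. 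The first step is to identify a scalar weight $w(r)>0$, determined from an ODE involving only the background coefficients, and multiply the $\psi$-equation by $w(r)\p_r\psi$.

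The second step is to integrate the weighted identity over $\m$. The Neumann conditions $\p_\th\psi=0$ on $\Gamma_{\th_0}^\pm$ and $\p_z\psi=0$ on $\Gamma_1^\pm$, together with \myref{2-7-f-1}, eliminate all side boundary contributions; at $r=r_0$ the Cauchy data $\psi=0,\,\p_r\psi=F_5$ produce a term bounded by $C\|F_5\|_{L^2(\overline D)}^2$; at $r=r_1$ where no condition on $\psi$ is imposed, the trace evaluates (modulo small perturbations) to
\[
\int_D w\bigl\{\bar A_{11}(\p_r\psi)^2 - \bar A_{22}(\p_\th\psi)^2 - \bar A_{33}(\p_z\psi)^2\bigr\}\big|_{r=r_1}\, \de\th\,\de z,
\]
which, after moving to the left-hand side, is nonnegative because $\bar A_{11}<0$ and $\bar A_{22},\bar A_{33}>0$; this is the gain from the supersonic exit. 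The interior contribution becomes the quadratic form
\[
\int_\m \bigl(-(w\bar A_{11})'-2w\bar a_1\bigr)(\p_r\psi)^2 + (w\bar A_{22})'(\p_\th\psi)^2 + (w\bar A_{33})'(\p_z\psi)^2\,\de {\bf x} + \text{coupling with }\h\Psi,
\]
and I choose $w$ so that the coefficient of $(\p_r\psi)^2$ is bounded below by a fixed $\mu>0$, by solving the linear ODE $(w\bar A_{11})'+2w\bar a_1=-\mu$ with $w(r_0)=1$. Since $\bar A_{11}$ is negative and smooth on the background interval, $w$ stays in a compact range bounded away from $0$ on $[r_0,r_0+R_0-\epsilon_0]$; the remaining interior terms in $(\p_\th\psi)^2$ and $(\p_z\psi)^2$ are then bounded by $C\|\psi\|_{H^1(\m)}^2$, to be absorbed by a Gr\"onwall argument in $r$ or, equivalently, by taking $r_1-r_0$ small.

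The third step is to control the lower-order and coupling terms. Cross-derivative integrals from $\e A_{12},\e A_{13},\e A_{23}$ are of size $C\delta^\ast\|\psi\|_{H^1(\m)}^2$ by the $L^\infty$ smallness coming from \myref{2-7-f} and the compatibility conditions \myref{2-7-f-1}; they are absorbed for $\delta^\ast$ small. The couplings $\int 2w\bar b_1\p_r\h\Psi\p_r\psi$ and $\int 2w\bar b_2\h\Psi\p_r\psi$ and the right-hand side $\int 2w\e F_3\p_r\psi$ are handled by Cauchy--Schwarz with a small parameter $\epsilon$ against the coercive $\mu(\p_r\psi)^2$, producing terms $C_\epsilon(\|\h\Psi\|_{L^2}^2+\|\p_r\h\Psi\|_{L^2}^2+\|\e F_3\|_{L^2}^2)$. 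In parallel, multiplying $\e L_2(\psi,\h\Psi)=\e F_4$ by $\h\Psi$ and integrating, using the Dirichlet condition $\h\Psi|_{\Gamma_{ex}}=0$, the Neumann conditions on the remaining portion of $\p\m$, and Poincar\'e, yields
\[
\|\h\Psi\|_{H^1(\m)}^2 \leq C\bigl(\|\p_r\psi\|_{L^2(\m)}^2 + \|\e F_4\|_{L^2(\m)}^2\bigr),
\]
with the term $\int\bar a_2\p_r\psi\h\Psi$ absorbed by Cauchy--Schwarz and the $\bar b_3\h\Psi^2$ term. Substituting this into the weighted hyperbolic identity and choosing the Cauchy--Schwarz parameter, $\delta^\ast$, and $r_1-r_0$ sufficiently small so that the net coefficient of $\|\p_r\psi\|_{L^2(\m)}^2$ on the right-hand side is strictly less than $\mu$ closes the estimate and yields \myref{3-8}; the smallness requirements fix $\bar r_1^\ast$ and $\delta_1^\ast$.

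The main obstacle is the construction of the weight $w$: the sign mismatch between $\bar A_{11}$ and $\bar A_{22},\bar A_{33}$ already gives a favorable trace at $r=r_1$, but the lower-order term $\bar a_1\p_r\psi$ can destroy interior coercivity without a carefully chosen weight. Solving the linear ODE $(w\bar A_{11})'+2w\bar a_1=-\mu$ tuned to the background coefficients produces a uniform coercive lower bound; after that, the hyperbolic--elliptic coupling is absorbed by smallness of $r_1-r_0$ and $\delta^\ast$ in an essentially mechanical manner.
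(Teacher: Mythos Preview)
Your overall framework matches the paper's: multiply the hyperbolic equation by $w(r)\partial_r\psi$, multiply the elliptic equation by $\hat\Psi$, and combine. The boundary analysis and the treatment of the $\e A_{ij}-\bar A_{ij}$ perturbations are also essentially the same. The difference---and the gap---lies in the choice of the weight and how the hyperbolic--elliptic coupling is closed.

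You solve only a \emph{linear} ODE for $w$, tuned so that the interior coefficient of $(\partial_r\psi)^2$ equals a fixed $\mu>0$, and plan to absorb the remaining tangential and coupling terms afterward. The problem is the last substitution: your elliptic estimate, with $\int\bar a_2\partial_r\psi\,\hat\Psi$ absorbed against the $\bar b_3\hat\Psi^2$ term, produces $\|\hat\Psi\|_{H^1}^2\le C_1\|\partial_r\psi\|_{L^2}^2+\cdots$ where $C_1$ is an $O(1)$ constant fixed by $\bar a_2,\bar b_3$. After you feed this back into the hyperbolic identity through the Cauchy--Schwarz splitting of $\int w\bar b_1\partial_r\hat\Psi\,\partial_r\psi$ and $\int w\bar b_2\hat\Psi\,\partial_r\psi$, the net coefficient of $\|\partial_r\psi\|_{L^2}^2$ on the right is $C_\epsilon C_1$, again $O(1)$. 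Nothing in your argument forces $C_\epsilon C_1<\mu$: taking $r_1-r_0$ small does not shrink $C_\epsilon C_1$, and taking $\delta^\ast$ small is irrelevant since these constants come from the background. (Your Gr\"onwall/smallness remark for the tangential terms is a separate issue; Gr\"onwall in $r$ does handle those, but it is not ``equivalent'' to smallness of $r_1-r_0$, and in any case it does not touch the coupling constants.)

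The paper closes this loop by building the coupling directly into the weight ODE. After bounding $|J_2|$ via Cauchy with the explicit coercivity constant $\xi_{r_1}=\min(\bar b_3-\tfrac{1}{2r^2})>0$ (a consequence of the supersonic background bound $\bar U^3<2r_0J_0$), the required lower bound on the $(\partial_r\psi)^2$ coefficient becomes $\tfrac12(Z\bar A_{11})'-\bar a_1 Z-f(Z,r)\ge\lambda_0$ with $f(Z,r)=2(\bar b_1^2+\bar b_2^2/\xi_{r_1})Z^2+2\bar a_2^2/\xi_{r_1}$. Together with the constraints $-\tfrac12(Z\bar A_{22})'\ge\lambda_0$ and $-\tfrac12(Z\bar A_{33})'\ge\lambda_0$ (so the tangential interior terms are themselves coercive, no Gr\"onwall needed), this forces $Z$ to satisfy a \emph{Riccati} ODE $-Z'-\mathfrak a_2 Z^2+2\mathfrak a_1 Z-\mathfrak a_0=\lambda_0$. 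Its local solvability on a short interval is what fixes $\bar r_1^\ast$. The quadratic term in $Z$ is precisely the contribution of the coupling coefficients $\bar b_1,\bar b_2$; dropping it (as your linear ODE does) is exactly why your closure does not go through.
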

\begin{proof}
We   utilize the multiplier method to derive the energy  estimate  \eqref{3-8}. More precisely,
let $Z(r) $ be  a smooth function defined   in $[r_0,r_1]$ to be determined, set
\begin{equation*}
\begin{aligned}
I_1(\psi,\h\Psi,Z)=\iiint_{\m} {Z}L_1(\psi,\h\Psi)\p_r\psi\de r \de \th\de z, \ \ I_2(\psi,\h\Psi)=\iiint_{\m}L_2(\psi,\h\Psi)\h\Psi\de r \de \th \de z.
\end{aligned}
\end{equation*}
For $ I_1 $,  integration by parts leads to
$$ I_1(\psi,\h\Psi,Z)=\sum_{i=1}^6 I_{1i}(\psi,\h\Psi,Z), $$
where
\begin{equation*}
\begin{aligned}
I_{11}(\psi,\h\Psi,Z)&=\iiint_{\m}Z\e A_{11}\p_r^2 \psi \p_r\psi\de r \de \th \de z\\
&=\iint_{D}\frac12\bigg(Z\e A_{11}(\p_r \psi)^2 \bigg)(r_1,\th,z) \de \th\de z-\iint_{D}\frac12\bigg(Z\e A_{11}(\p_r \psi)^2 \bigg)(r_0,\th,z) \de \th\de z\\
&\quad -\iiint_{\m}\frac12\p_r(Z\e A_{11})(\p_r \psi)^2\de r \de \th\de z,\\
I_{12}(\psi,\Psi,Z)&=\iiint_{\m}\bigg(2Z\e A_{12}\p_{r\th}^2 \psi \p_r\psi+2Z\e A_{13}\p_{rz}^2 \psi \p_r\psi\bigg)\de r \de \th\de z\\
&=-\iiint_{\m}\bigg(\p_{\th}(Z\e A_{12})(\p_r \psi)^2+\p_{z}(Z\e A_{13})(\p_r \psi)^2\bigg)\de r \de \th \de z,\\
I_{13}(\psi,\h\Psi,Z)&=\iiint_{\m}Z\e A_{22}\p_{\th}^2 \psi \p_r\psi\de r \de \th \de z\\
&=-\iiint_{\m}Z\p_\th \e A_{22}\p_r\psi\p_\th\psi \de r\de \th\de z-\iint_{D}\frac12\bigg(Z\e A_{22}(\p_\th \psi)^2 \bigg)(r_1,\th,z) \de \th\de z\\
&\quad+
\iiint_{\m}\frac12\p_r(Z\e A_{22})(\p_\th \psi)^2\de r \de \th\de z,\\
I_{14}(\psi,\h\Psi,Z)&=\iiint_{\m}2Z\e A_{23}\p_{\th z}^2 \psi \p_r\psi\de r \de \th \de z\\
&=\iiint_{\m}\bigg(-Z(\p_\th\e A_{23}\p_z\psi+\p_z\e A_{23}\p_\th\psi)\p_{r}\psi+\p_r(Z\e A_{23})\p_{\th}\psi\p_{z}\psi\bigg)\de r \de \th \de z\\
&\quad-\iint_D\bigg(Z\e A_{23}\p_{\th}\psi\p_{z}\psi\bigg)(r_1,\th,z) \de \th \de z,\\
\end{aligned}
\end{equation*}
\begin{equation*}
\begin{aligned}
I_{15}(\psi,\h\Psi,Z)&=\iiint_{\m}Z\e A_{33}\p_{z}^2 \psi \p_r\psi\de r \de \th \de z\\
&=-\iiint_{\m}Z\p_z \e A_{33}\p_r\psi\p_z\psi \de r\de \th\de z-\iint_{D}\frac12\bigg(Z\e A_{33}(\p_z \psi)^2 \bigg)(r_1,\th,z) \de \th\de z\\
&\quad+
\iiint_{\m}\frac12\p_r(Z\e A_{33})(\p_z \psi)^2\de r \de \th\de z,\\
I_{16}(\psi,\h\Psi,Z)&=\iiint_{\m}Z(\x a_1\p_r\psi+\x b_1\p_r\h\Psi+\x b_2\h\Psi)\p_r\psi\de r \de \th\de z.\\
\end{aligned}
\end{equation*}
For $I_2 $,   integration by parts gives
\begin{equation*}
\begin{aligned}
I_2(\psi,\h\Psi)&=\iiint_{\m}\left(\bigg(\p_r^2+\frac 1 r\p_r+\frac{1}{r^2}\p_{\th}^2+\p_z^2\bigg)\h\Psi+\x a_2\p_r\psi-\x b_3\h\Psi\right)\h\Psi\de r\de \th\de z\\
&=-\iiint_{\m}(\p_r\h\Psi)^2\de r\de \th\de z-\iiint_{\m}\frac1{r^2}(\p_\th\h\Psi)^2\de r\de \th\de z -\iiint_{\m}(\p_z\h\Psi)^2\de r\de \th\de z\\
&\quad-\iint_{D}\left(\frac{\h\Psi^2}{2r}\right)(r_0,\th,z)\de \th\de z+\iiint_{\m}\frac{\h\Psi^2}{2r^2}\de r\de \th \de z+\iiint_{\m}\x a_2\p_r\psi\h\Psi\de r\de \th\de z\\
&\quad-\iiint_{\m}\x b_3\h\Psi^2\de r\de \th\de z.
\end{aligned}
\end{equation*}
\par Therefore, one has
\begin{equation*}
\begin{aligned}
I_1(\psi,\h\Psi,Z)+I_2(\psi,\h\Psi)=-\left(J_1(\psi,\h\Psi,Z)+J_2(\psi,\h\Psi,Z)
+J_3(\psi,\h\Psi,Z)+J_4(\psi,\h\Psi,Z)\right),
\end{aligned}
\end{equation*}
where
\begin{equation*}
\begin{aligned}
J_1(\psi,\h\Psi,Z)&=\iiint_{\m} \bigg(\bigg(\frac12\p_r(Z\e A_{11})+Z\p_\th\e A_{12}+Z\p_z\e A_{13}-\bar a_1Z\bigg)(\p_r\psi)^2-\frac12\p_{r}(Z\e A_{22})(\p_\th\psi)^2\\
&\qquad\qquad-\frac12\p_{r}(Z\e A_{33})(\p_z\psi)^2+(\p_r\h\Psi)^2+\bigg(\x b_3-\frac1{2r^2}\bigg)\h\Psi^2+\frac1{r^2}(\p_\th\h\Psi)^2+(\p_z\h\Psi)^2\bigg)\de r\de \th \de z,
\\
J_2(\psi,\h\Psi,Z)&=\iiint_{\m}\bigg(-Z\x b_1\p_r\h\Psi \p_r\psi-Z\x b_2\h\Psi \p_r\psi -\x a_2\p_r\psi\h\Psi\bigg)\de r\de \th \de z,\\
J_3(\psi,\h\Psi,Z)&=\iiint_{\m}\bigg(Z\p_\th \e A_{22}\p_r\psi\p_\th\psi+Z\p_\th\e A_{23}\p_z\psi\p_{r}\psi+Z\p_z\e A_{23}\p_\th\psi \p_{r}\psi \\
&\qquad\qquad+Z\p_z \e A_{33}\p_r\psi\p_z\psi-\p_r(Z\e A_{23})\p_{\th}\psi\p_{z}\psi\bigg)\de r\de \th \de z,\\
\end{aligned}
\end{equation*}
\begin{equation*}
\begin{aligned}
J_4(\psi,\h\Psi,Z)&=\iint_{D}\bigg(-\frac12\bigg(Z\e A_{11}(\p_r \psi)^2 \bigg)(r_1,\th,z) +\frac12\bigg(Z\e A_{11}(\p_r \psi)^2 \bigg)(r_0,\th,z)+\frac12\bigg(Z\e A_{22}(\p_\th \psi)^2 \bigg)(r_1,\th,z)\\
&\qquad\quad+ \bigg(Z\e A_{23}\p_{\th}\psi\p_{z}\psi\bigg)(r_1,\th,z)+\frac12\bigg(Z\e A_{33}(\p_z \psi)^2 \bigg)(r_1,\th,z)+\bigg(\frac{\h\Psi^2}{2r}\bigg)(r_0,\th,z)\bigg)\de \th\de z.
 \end{aligned}
\end{equation*}
 It follows from Proposition  \ref{pro1} that
$$ \bar U^3(r)<2r_0J_0, \ \  r\in [r_0,r_0+R_0).$$
Thus one  derives
\begin{equation*}
\x b_3(r)-\frac1{2r^2}=\frac{\bar \rho(r)}{\bar c^2(r)}-\frac1{2r^2}>\frac{\bar \rho(r)}{\bar U^2(r)}-\frac1{2r^2}
>\frac{(\bar \rho\bar U)(r)}{2r_0 J_0}-\frac1{2r^2}>0, \ \  r\in [r_0,r_0+R_0).
\end{equation*}
For each $ r_1\in (r_0,r_0+R_0) $, set
\begin{equation}\label{2-19-2}
 \xi_{r_1}=\min_{[r_0,r_1]}\bigg(\x b_3(r)-\frac1{2r^2}\bigg),
\end{equation}
which is a strictly positive constant depending only on $(b_0,  \A_0,U_{0},S_{0},E_{0})$. Then one can use the Cauchy inequality to obtain that
\begin{equation*}
\begin{split}
\vert J_2(\psi,\h\Psi,Z)\vert
&\leq\iiint_{\Omega}\bigg(\frac{1}{8}(\p_r{\h\Psi})^2+\frac{\xi_{r_1}}{4}{\h\Psi}^2
+f(Z,r)(\p_r\psi)^2\bigg)
\mathrm{d}r\mathrm{d}\theta \de z,
\end{split}
\end{equation*}
where
\begin{equation*}
 f(Z,r)=
2\bigg(\bar b_1^2(r)+\frac{\bar b_2^2(r)}{\xi_{r_1}}\bigg)Z^2(r)
+2\frac{\bar a_2^2(r)}{\xi_{r_1}}.
\end{equation*}
\par Note that $ \e A_{ij}$  is close to $ \x A_{ij}$ provided that $ \delta^\ast $ in  the definition \eqref{3-4}  is sufficiently small. We first estimate
 \begin{equation}\label{3-18}
 \begin{aligned}
 &-(\x I_1(\psi,\h\Psi,Z)+\x I_2(\psi,\h\Psi))\\
 &=\iiint_{\m}\bigg(\bigg(\frac12(Z\x A_{11})'-\bar a_1Z\bigg)(\p_r\psi)^2
 -\frac12(Z\x A_{22})'(\p_\th\psi)^2- \frac12(Z\x A_{33})'(\p_z\psi)^2\\
 &\qquad\qquad+(\p_r\h\Psi)^2+\bigg(\x b_3-\frac1{2r^2}\bigg)\h\Psi^2+\frac1{r^2}(\p_\th\h\Psi)^2+(\p_z\h\Psi)^2\bigg)\de r\de \th \de z
 +J_2(\psi,\h\Psi,Z)\\
 &\quad+\iint_{D}\bigg(-\frac12\bigg(Z\x A_{11}(\p_r \psi)^2 \bigg)(r_1,\th,z) \de \th\de z+\frac12\bigg(Z\x A_{11}(\p_r \psi)^2 \bigg)(r_0,\th,z)\\
&\qquad\qquad\ \ +\frac12\bigg(Z\x A_{22}(\p_\th \psi)^2 \bigg)(r_1,\th,z)+\frac12\bigg(Z\x A_{33}(\p_z \psi)^2 \bigg)(r_1,\th,z)+\bigg(\frac{\h\Psi^2}{2r}\bigg)(r_0,\th,z)\bigg)\de \th\de z.\\
 \end{aligned}
\end{equation}
If the function $Z $ satisfies $ Z(r)>0 $ in $[r_0,r_1]$, it holds that
\begin{equation}\label{3-18-e}
 \begin{aligned}
 &-(\x I_1(\psi,\h\Psi,Z)+\x I_2(\psi,\h\Psi))\\
  &\geq \iiint_{\m}\bigg(\bigg(\frac12(Z\x A_{11})'-\bar a_1Z- f(Z,r)\bigg)(\p_r\psi)^2
 -\frac12(Z\x A_{22})'(\p_\th\psi)^2- \frac12(Z\x A_{33})'(\p_z\psi)^2\\
& \qquad\qquad\ \ \ \ +\frac{3\xi_{r_1}}{4}{\h\Psi}^2+
 \frac{7}{8}(\p_{r}{\h\Psi})^2+\frac{1}{r^2}(\p_{\th}{\h\Psi})^2+(\p_z\h\Psi)^2\bigg)
  \mathrm{d}r\mathrm{d}\theta\de z\\
 & \quad
 +
\iint_D\bigg(\x\mu_1\bigg(\frac{Z}{2}\left((\p_r\psi)^2+(\p_\th\psi)^2+(\p_z\psi)^2
\right)\bigg)
(r_1,\th,z)+\bigg(\frac{{\h\Psi}^2}{2r}\bigg)(r_0,\th,z) -\bigg(\frac{ZF_5^2}{2\x\mu_1}\bigg)(r_0,\th,z)\bigg)\mathrm{d}\theta \de z,
\end{aligned}
\end{equation}
where  $\x\mu_1 $ is given in \eqref{2-7-a}.  Hence we need to chose $ Z $ and $ r_1 $  such that for $ r\in[r_0,r_1] $, the function $ Z $ satisfies:
\begin{enumerate}[ \rm (i)]
 \item $ \min_{[r_0,r_1]} Z(r)>0 $;
 \item $\frac12(Z\x A_{11})'(r)-(\bar a_1Z)(r)- f(Z,r)\geq\lambda_0$;
 \item $-\frac12(Z\x A_{22})'(r)\geq\lambda_0$;
 \item $-\frac12(Z\x A_{33})'(r)\geq\lambda_0$,
 \end{enumerate}
    for some positive   constant $ \lambda_0 $.
    \par Set
\begin{equation*}
\xi_{1}=\min_{[r_0,r_0+R_0-\epsilon_0]}\bigg(\x b_3(r)-\frac1{2r^2}\bigg).
\end{equation*}
Then  for $ r_1\in(r_0,r_0+R_0-\epsilon_0]$, one has
\begin{equation}\label{mu_1}
\xi_{r_1}\geq\xi_{1}>0.
\end{equation}
Define
\begin{equation*}
\begin{aligned}
&\mathfrak {a}_0:=
\max_{(r_0,r_0+R_0-\epsilon_0]}\frac{4\bar a_2^2}{\bar\mu_1\xi_{1}},\qquad \
\mathfrak {a}_2:=\max_{(r_0,r_0+R_0-\epsilon_0]}\frac{4}{\bar\mu_1}
\bigg(\bar b_1^2+\frac{\bar b_2^2}{\xi_{1}}\bigg),\\
&\mathfrak {a}_{1,1}:=\min_{(r_0,r_0+R_0-\epsilon_0]}
\bigg(-\frac{1}{\x{A}_{11}}\bigg(\frac{\bar A_{11}'}{2}-\x a_1\bigg)\bigg),\ \
\mathfrak {a}_{1,2}:=\min_{(r_0,r_0+R_0-\epsilon_0]}
\bigg(-\frac{\bar A_{22}'}{2\bar A_{22}}\bigg),\\
 &\mathfrak {a}_{1,3}:=\min_{(r_0,r_0+R_0-\epsilon_0]}
\bigg(-\frac{\bar A_{33}'}{2\bar A_{33}}\bigg), \ \
\mathfrak {a}_1:=\min\{\mathfrak {a}_{1,1},\mathfrak {a}_{1,2},\mathfrak {a}_{1,3}\}.
\end{aligned}
\end{equation*}
It follows from Proposition  \ref{pro2} that there exists a constant $C_b>0$ depending only on $(b_0, \A_0,U_{0}, S_{0}, $\\$E_{0},\epsilon_0)$ such that
\begin{equation*}
0<\mathfrak {a}_i\leq C_b, \ \ i=0,2, \ \ \hbox{and} \ \ \vert \mathfrak {a}_1\vert\leq C_b.
\end{equation*}
Then the conditions ${\rm (i)-\rm (iv)}$ are satisfied as long as $Z>0$ and solves the ODE
\begin{equation}\label{ODE}
-Z'-\mathfrak {a}_2Z^2+2\mathfrak {a}_1Z-\mathfrak {a}_0=\lambda_0, \ \ r\in[r_0,r_1].
\end{equation}
  \eqref{ODE} can be solved by dividing two cases $ \mathfrak {a}_0\mathfrak {a}_2-\mathfrak {a}_1^2>0 $ and $ \mathfrak {a}_0\mathfrak {a}_2-\mathfrak {a}_1^2<0 $. Furthermore,   there exists a  positive constant $\bar r_1^\ast\in(r_0,r_0+R_0-\epsilon_0]$  so that for $r_1\in(r_0,\bar r_1^\ast)$, one can find a  positive constant  $\lambda_0 $  depending only on $(b_0, \A_0, U_{0},S_{0},E_{0},r_0,r_1,\epsilon_0)$   such  that
 the solution $ Z $ satisfies all the four  conditions stated in the above. The details  can be found in      \cite[Page 614-615]{DWY23}.
Therefore, we obtain from \eqref{3-18-e} that
\begin{equation}\label{3-19}
 \begin{aligned}
 &-(\x I_1(\psi,\h\Psi,Z)+\x I_2(\psi,\h\Psi))\\
  &\geq \iiint_{\m}\bigg(\lambda_0\left((\p_r\psi)^2+(\p_\th\psi)^2+(\p_z\psi)^2\right) +\frac{3\xi_{r_1}}{4}{\h\Psi}^2+
 \frac{7}{8}(\p_{r}{\h\Psi})^2+\frac{1}{r^2}(\p_{\th}{\h\Psi})^2+(\p_z\h\Psi)^2\bigg)
  \mathrm{d}r\mathrm{d}\theta\de z\\
 & \quad
 +
\iint_D\bigg(\x\mu_1\bigg(\frac{Z}{2}\left(
(\p_r\psi)^2+(\p_\th\psi)^2+(\p_z\psi)^2
\right)\bigg)
(r_1,\th,z)+\bigg(\frac{{\h\Psi}^2}{2r}\bigg)(r_0,\th,z) -\bigg(\frac{ZF_5^2}{2\x\mu_1}\bigg)(r_0,\th,z)\bigg)\mathrm{d}\theta \de z.
\end{aligned}
\end{equation}
\par By Proposition \ref{pro2}, one can fix a small constant $ \delta_1^\ast\in(0, \frac{\delta_0^\ast}2] $ depending on $ (b_0, \A_0, U_{0},S_{0},E_{0},\epsilon_0)$ so that if the constant $ \delta^\ast $ in  the definition \eqref{3-4} satisfies
$ \delta^\ast \leq \delta_1^\ast, $ the
classical solution $ (\psi,\h\Psi) $ to the linear boundary value problem \eqref{3-6} associated with $ (\e\psi,\e\Psi)\in \ma_{\delta^\ast, r_1}  $ satisfies the estimate
\begin{equation}\label{3-20}
 \begin{aligned}
 &-( I_1(\psi,\h\Psi,Z)+ I_2(\psi,\h\Psi))\\
  &\geq \iiint_{\m}\bigg(\frac{\lambda_0}2\left((\p_r\psi)^2+(\p_\th\psi)^2+(\p_z\psi)^2\right) +\frac{3\xi_{r_1}}{4}{\h\Psi}^2+
 \frac{7}{8}(\p_{r}{\h\Psi})^2+\frac{1}{r^2}(\p_{\th}{\h\Psi})^2+(\p_z\h\Psi)^2\bigg)
  \mathrm{d}r\mathrm{d}\theta\de z\\
 & \quad
 +
\iint_D\bigg(\mu_1\bigg(\frac{Z}{2}\left(
(\p_r\psi)^2+(\p_\th\psi)^2+(\p_z\psi)^2
\right)\bigg)
(r_1,\th,z)+\bigg(\frac{{\h\Psi}^2}{2r}\bigg)(r_0,\th,z) -\bigg(\frac{ZF_5^2}{2\mu_1}\bigg)(r_0,\th,z)\bigg)\mathrm{d}\theta \de z,
\end{aligned}
\end{equation}
where $\mu_1 $ is given in \eqref{2-7-e-e}.
Furthermore, \eqref{3-6} shows that
 \begin{equation}\label{3-21}
 \begin{aligned}
 &-( I_1(\psi,\h\Psi,Z)+ I_2(\psi,\h\Psi))=-\iiint_{\m}\left(Z\e F_3\p_r\psi+\e F_4\h\Psi\right)\de r \de \th\de z\\
 &\leq\iiint_{\Omega}\bigg(\frac{\lambda_0}{4}(\p_r\psi)^2
+\frac{1}{\lambda_0}\vert Z\e F_3\vert^2
+\frac{\xi_{r_1}}{4}{\h\Psi}^2+\frac{1}{\xi_{r_1}}
\e F_4^2\bigg)\mathrm{d}r\mathrm{d}\theta\de z.
 \end{aligned}
 \end{equation}
  Note that $\max_{r\in[r_0,r_1]}Z(r)>0$  depends only on $(b_0,   \A_0, U_{0},S_{0},E_{0},r_0,r_1,\epsilon_0)$.  Then \eqref{3-21} together with \eqref{3-20} and the Poincar\'{e} inequality   yields that
\begin{equation*}
\|(\psi,\h\Psi)\|_{H^1(\m)}\leq C\left(\|\e F_3\|_{L^2(\m)}+\|\e F_4\|_{L^2(\m)}+\|F_5\|_{L^2(\overline D)}\right),
\end{equation*}
 where the constant  $C>0$ depends only on $(b_0,   \A_0, U_{0},S_{0},E_{0},\th_0,r_0,r_1,\epsilon_0)$.
\end{proof}
With the $ H^1 $ energy estimate,  one can  regard the system
 \eqref{3-6} as two individual
boundary value problems for $ \psi $ and $\h\Psi $ by leaving only the principal term on the left-hand side of
the equations to  derive the higher order derivative estimates for $ (\psi,\h\Psi) $.
\begin{lemma}\label{pro5}
  For fixed $\epsilon_0\in(0,R_0)$, let  $\bar r_1^\ast\in(r_0,r_0+R_0-\epsilon_0]$ and $ \delta_1^\ast\in(0, \frac{\delta_0^\ast}2] $ be from  Lemma \ref{pro4}. There exists a positive constant $C$ depending only on $(b_0,   \A_0, U_{0},S_{0},E_{0},\th_0,r_0,r_1,\epsilon_0)$ and a sufficiently small constant  $ \delta_2^\ast\in(0,{\delta_1^\ast}]$
such that for  $ r_1\in (r_0,\bar r_1^\ast) $ and $\delta^\ast\leq \delta_2^\ast$, the classical   solution $ (\psi,\h\Psi) $ to  \eqref{3-6} satisfies
\begin{eqnarray}\label{H^4-1-1-1}
&&\Vert\psi\Vert_{H^4(\Omega)}
\leq C\left(\|\e F_3\|_{H^3(\m)}+\|\e F_4\|_{H^2(\m)}+\|F_5\|_{H^3(\overline D)}\right),\\\label{H^4-2-2}
&&\Vert\h\Psi\Vert_{H^4(\Omega)}
\leq C\left(\|\e F_3\|_{H^2(\m)}+\|\e F_4\|_{H^2(\m)}+\|F_5\|_{H^2(\overline D)}\right).
\end{eqnarray}

\end{lemma}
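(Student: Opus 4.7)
The plan is to decouple the two equations by moving the off-diagonal coupling to the right-hand side: for $\psi$, the terms $\bar b_1 \partial_r \hat\Psi + \bar b_2 \hat\Psi$ are absorbed into the source, reducing the first equation to a second-order equation that is hyperbolic in the $r$-direction (since $\tilde A_{11}<0$ while $[\tilde A_{ij}]_{i,j=2}^3$ is positive definite by \eqref{2-7-e}--\eqref{2-7-e-e}); for $\hat\Psi$, the term $\bar a_2 \partial_r \psi$ is treated as a source, leaving a uniformly elliptic Poisson-type problem with mixed Dirichlet--Neumann data. I would then bootstrap the $H^1$ bound of Lemma \ref{pro4} one derivative at a time; the mismatch between the orders of $\tilde F_3$ and $\tilde F_4$ in \eqref{H^4-1-1-1}--\eqref{H^4-2-2} reflects the usual hyperbolic gain of one derivative versus the elliptic gain of two derivatives.

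To prevent the lateral boundaries $\Gamma^{\pm}_{\th_0}$ and $\Gamma^{\pm}_{1}$ from obstructing differentiation, I would employ the reflection technique across $\{\th = \pm \th_0\}$ and $\{z = \pm 1\}$. The compatibility conditions \eqref{2-7-f-1} on the coefficients are tailored precisely so that even extension of the solution and of the sources (together with appropriate odd extension of $\tilde A_{12},\tilde A_{13},\tilde A_{23}$) produces a system of the same form on an enlarged slab with no lateral boundaries in the tangential directions. On that enlarged domain, the tangential derivatives $\partial_{\th}^{j} \partial_{z}^{\ell}$ with $j+\ell \leq 3$ commute cleanly with the principal part; applying Lemma \ref{pro4} to each differentiated pair $(\partial_{\th}^{j} \partial_{z}^{\ell} \psi,\partial_{\th}^{j} \partial_{z}^{\ell} \hat\Psi)$ produces $L^{2}$ control of all mixed derivatives of total order at most four involving at most one $r$-derivative, in terms of tangential derivatives of $\tilde F_3,\tilde F_4$ up to order three and two respectively, and of $F_5$ up to order $j+\ell$. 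Normal derivatives are then recovered algebraically: solving the hyperbolic equation for $\partial_{r}^{2} \psi$ (legitimate because $|\tilde A_{11}|\geq \mu_1>0$) and the Poisson equation for $\partial_{r}^{2} \hat\Psi$, and then differentiating these identities once or twice more in $r$, supplies the missing $\partial_{r}^{3}$ and $\partial_{r}^{4}$ estimates.

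The hard part is verifying that the reflected object genuinely belongs to $H^{4}$ of the enlarged domain. This requires the solution to satisfy compatibility conditions of the form $\partial_{\th}^{k}\psi = \partial_{\th}^{k}\hat\Psi = 0$ on $\Gamma^{\pm}_{\th_0}$ and $\partial_{z}^{k}\psi = \partial_{z}^{k}\hat\Psi = 0$ on $\Gamma^{\pm}_{1}$ for $k=1,3$, which go beyond the $k=1$ cases imposed explicitly in \eqref{3-6}. The $k=3$ conditions are deduced inductively by differentiating the Neumann conditions tangentially and using the equations to trade one $r$-derivative for tangential ones, a procedure that closes thanks to the vanishing statements in \eqref{2-7-f-1} and the compatibility of $(F_5,\tilde F_3,\tilde F_4)$ inherited from \eqref{1-t-2}. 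Smallness of $\delta^{\ast}$ is used in the final step to absorb the commutator terms produced when tangential differentiation hits the variable coefficients $\tilde A_{ij}$, delivering \eqref{H^4-1-1-1} and \eqref{H^4-2-2}.
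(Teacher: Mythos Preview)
Your plan is sound and would yield the lemma, but it diverges from the paper's execution in the key step of upgrading $\psi$ from $H^1$ to $H^4$. You propose tangential differentiation in $(\theta,z)$ together with a re-application of the coupled $H^1$ estimate of Lemma~\ref{pro4} on a reflected domain, recovering $r$-derivatives afterward algebraically from the equation. The paper goes the other way around: it first differentiates once in $r$ (setting $h_1=\partial_r\psi$), multiplies the resulting equation by $\partial_r h_1$, integrates over the truncated slab $\Omega_t=\{r_0<r<t\}$, and closes via Gronwall in $t$; the remaining purely tangential second derivatives $\partial_\theta^2\psi$, $\partial_{\theta z}^2\psi$, $\partial_z^2\psi$ are then obtained by freezing $r$ and viewing the principal part $\tilde A_{22}\partial_\theta^2+2\tilde A_{23}\partial_{\theta z}^2+\tilde A_{33}\partial_z^2$ as a two-dimensional uniformly elliptic operator on $D$ (this is where \eqref{2-7-e-e} is used), with the variable-coefficient remainder absorbed by the explicit smallness condition $C\delta^\ast\le\tfrac12$. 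The higher-order steps simply iterate this pair of moves.

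The paper's route never re-invokes the multiplier $Z$ of Lemma~\ref{pro4} and therefore never needs to transplant that lemma to the enlarged reflected domain; it also does not need to establish the third-order compatibility $\partial_\theta^3\psi=\partial_z^3\psi=0$ on the lateral walls \emph{before} the $H^4$ bound is in hand (your inductive derivation of these is correct but is an extra layer). Your tangential-commutation scheme is the tidier bookkeeping device when the coefficients are genuinely translation-invariant in the tangential variables; the paper's combination of $r$-differentiation plus slice-ellipticity is arguably more robust here because the only $(\theta,z)$-dependence of the coefficients is a small perturbation, which the 2D elliptic estimate handles directly. Both routes lead to the same bootstrap pattern and to \eqref{H^4-1-1-1}--\eqref{H^4-2-2}.
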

\begin{proof}

The proof is divided into three steps.
 \par { \bf Step 1. A priori $H^2$-estimate.}
\par We first derive the $H^2$ estimate for $\h\Psi$.
   $ \h\Psi $ can be regarded as a classical solution to the linear boundary value problem:
    \begin{equation}\label{3-22}
\begin{cases}
\bigg(\p_r^2+\frac 1 r\p_r+\frac{1}{r^2}\p_{\th}^2+\p_z^2\bigg)\h\Psi-\x b_3\h\Psi=-\x a_2\p_r\psi+\e F_4,\ \ &{\rm{in}}\ \ \Omega,\\
\p_r \h\Psi(r_0,\th,z)=0,\ \ &{\rm{on}}\ \ \Gamma_{en},\\
\h\Psi(r_1,\th,z)=0, \ \ &{\rm{on}}\ \ \Gamma_{ex},\\
\p_\th \h\Psi(r,\pm\th_0,z)=0, \ \  &{\rm{on}}\ \ \Gamma_{\th_0}^\pm,\\
\p_z \h\Psi(r,\th,\pm 1)=0, \ \  &{\rm{on}}\ \ \Gamma_{1}^\pm.\\
\end{cases}
\end{equation}
Then one can   use the method of reflection
and  apply \cite[Theorems 8.8 and 8.12]{GT98} to obtain the estimate
 \begin{equation*}
 \begin{aligned}
\Vert\h\Psi\Vert_{H^2(\Omega)}
\leq C\left(\Vert\e F_4\Vert_{L^2(\Omega)}+\Vert\p_r\psi\Vert_{L^2(\Omega)}
+\Vert\h\Psi\Vert_{L^2(\Omega)}\right).
\end{aligned}
\end{equation*}
Combining this estimate with \eqref{3-8} yields
\begin{equation}\label{3-23}
 \begin{aligned}
\Vert\h\Psi\Vert_{H^2(\Omega)}
\leq C\left(\|\e F_3\|_{L^2(\m)}+\|\e F_4\|_{L^2(\m)}+\|F_5\|_{L^2(\overline D)}\right)
\end{aligned}
\end{equation}
   for the constant $C>0$ depending only on $(b_0,   \A_0, U_{0},S_{0},E_{0},\th_0,r_0,r_1,\epsilon_0)$.
 \par In the following, we derive the $H^2$ estimate for $\psi$.
 The first equation in \eqref{3-6} can be rewritten as
  \begin{equation}\label{3-24}
  \begin{aligned}
&\e A_{11}\p_r^2 \psi+2\e A_{12}\p_{r\th}^2\psi+
 2\e A_{13}\p_{rz}^2\psi
 + \e A_{22}\p_{\th}^2\psi+2\e A_{23}\p_{\th z}^2\psi
  +\e A_{33}\p_{z}^2\psi\\
 &=-\x a_1\p_r\psi -\x b_1\p_r\h\Psi-\x b_2\h\Psi+\e F_3, \ \ {\rm{in}}\ \ \Omega.
\end{aligned}
\end{equation}
Let $ h_1=\p_r\psi $. Differentiating \eqref{3-24} with respect to $r$ yields
\begin{equation}\label{3-25}
  \begin{aligned}
&\e A_{11}\p_r^2 h_1+2\e A_{12}\p_{r\th}^2h_1+
 2\e A_{13}\p_{rz}^2h_1
 + \e A_{22}\p_{\th}^2h_1+2\e A_{23}\p_{\th z}^2h_1+\e A_{33}\p_{z}^2h_1\\
&
=-\p_r\e A_{11}\p_rh_1-2\p_r\e A_{12}\p_{\th}h_1-2\p_r\e A_{13}\p_{z}h_1-\p_r\e A_{22}\p_{\th}^2\psi-2\p_r\e A_{23}\p_{\th z}^2\psi\\
&\quad -\p_r\e A_{33}\p_{z}^2\psi-\x a_1\p_rh_1-\x a_1'h_1+\p_r(-\x b_1\p_r\h\Psi-\x b_2\h\Psi+\e F_3), \  \ {\rm{in}}\ \ \Omega.
\end{aligned}
\end{equation}
For $ r_0<t<r_1 $, set
 $$ \Omega_t=\{(r,\theta,z):r_0<r<t,\, (\th,z)\in D \}.$$  By  integrating by parts with  the  boundary conditions in \eqref{3-6}, one obtains
 \begin{equation*}
\begin{aligned}
V(h_1)&=\iiint_{\Omega_t}\left(\e A_{11}\p_r^2 h_1+2\e A_{12}\p_{r\th}^2h_1+
 2\e A_{13}\p_{rz}^2h_1
 + \e A_{22}\p_{\th}^2h_1+2\e A_{23}\p_{\th z}^2h_1+\e A_{33}\p_{z}^2h_1\right)\p_r h_1 \de r \de \th \de z\\
 &=\iint_{D}\bigg(\frac{\e A_{11}}{2}(\p_r h_1)^2-\frac{\e A_{22}}{2}(\p_\th h_1)^2-\frac{\e A_{33}}{2}(\p_z h_1)^2-\e A_{23}\p_{\th}h_1\p_{z}h_1\bigg)(t,\th,z)\de \th\de z\\
 &\quad-\iint_{D}\bigg(\frac{\e A_{11}}{2}(\p_r h_1)^2
 -\frac{\e A_{22}}{2}(\p_\th h_1)^2-\frac{\e A_{33}}{2}(\p_z h_1)^2-\e A_{23}\p_{\th}h_1\p_{z}h_1\bigg)(r_0,\th,z)\de \th\de z\\
 &\ \ -\iiint_{\Omega_t}\bigg(\bigg(\frac{\p_r\e A_{11}}{2}+\p_\th\e A_{12}+\p_z\e A_{13}\bigg)
 (\p_rh_1)^2
  +\p_\th\e A_{22}\p_\th h_1\p_r h_1-\frac{\p_r\e A_{22}}{2}(\p_\th h_1)^2+\p_z\e A_{33}\p_z h_1\p_r h_1\\
 &\qquad\quad\quad \ \ -\frac{\p_r\e A_{33}}{2}(\p_z h_1)^2+(\p_\th\e A_{23}\p_zh_1
  +\p_z\e A_{23}\p_\th h_1)\p_{r}h_1-\p_r(\e A_{23})\p_{\th}h_1\p_{z}h_1\bigg)\de r \de \th \de z.
 \end{aligned}
 \end{equation*}
  Note that $ \delta_{1}^{\ast}\in(0,\frac{\delta_0^{\ast}}2] $ is fixed  depending only on $(b_0,   \A_0, U_{0},S_{0},E_{0},\epsilon_0)$. Then if
 $\delta^{\ast} \leq  \delta_{1}^{\ast}, $
 it follows from \eqref{2-7-f} that
  \begin{eqnarray}\label{2-7-f-p}
 \|(\e A_{11}, \e A_{12},\e A_{13},\e A_{22},\e A_{23},\e A_{33})\|_{H^3(\m)}\leq C
\end{eqnarray}
for the constant $C>0$ depending only on $(b_0, \A_0, U_{0},S_{0},E_{0},\epsilon_0)$.
  Combining  Proposition \ref{pro2}, the Morrey inequality, the Cauchy inequality and  \eqref{2-7-f-p} yields that \begin{equation}\label{3-26}
\begin{aligned}
V(h_1)&\leq \iint_{D}\frac{1}{\mu_1}\left(\p_r h_1)^2+(\p_\th h_1)^2+(\p_z h_1)^2\right)(r_0,\th,z)\de \th\de z\\
 &\quad-\iint_{D}\frac{\mu_1}{2}\left(\p_r h_1)^2+(\p_z h_1)^2+(\p_\th h_1)^2\right)(t,\th,z)\de \th\de z\\
 &\quad +C\iiint_{\Omega_t}\left((\p_r h_1)^2+(\p_\th h_1)^2+(\p_z h_1)^2\right)\de r \de \th \de z.
 \end{aligned}
 \end{equation}
 Furthermore, it follows from  \eqref{3-25} that
 \begin{equation*}
\begin{aligned}
V(h_1)&=\iiint_{\Omega_t}\left(\e A_{11}\p_r^2 h_1+2\e A_{12}\p_{r\th}^2h_1+
 2\e A_{13}\p_{rz}^2h_1
 + \e A_{22}\p_{\th}^2h_1+2\e A_{23}\p_{\th z}^2h_1+\e A_{33}\p_{z}^2h_1\right)\p_r h_1 \de r \de \th \de z\\
 &=\iiint_{\Omega_t}\bigg(-\p_r\e A_{11}\p_rh_1-2\p_r\e A_{12}\p_{\th}h_1-2\p_r\e A_{13}\p_{z}h_1-\p_r\e A_{22}\p_{\th}^2\psi-2\p_r\e A_{23}\p_{\th z}^2\psi\\
&\qquad\qquad \ \ -\p_r\e A_{33}\p_{z}^2\psi-\x a_1\p_rh_1-\x a_1'h_1+\p_r(-\x b_1\p_r\h\Psi-\x b_2\h\Psi+\e F_3)\bigg)\p_r h_1 \de r \de \th \de z.\\
\end{aligned}
 \end{equation*}
 Then one can   use the Cauchy inequality  and combine \eqref{3-8},    \eqref{3-23},  and \eqref{2-7-f-p} to  obtain
 \begin{equation}
\label{3-27}
\begin{split}
&\iint_{D}\frac{\mu_1}{2}\bigg((\p_r h_1)^2+(\p_\th h_1)^2+(\p_z h_1)^2\bigg)(t,\th,z)\de \th \de z
\leq
C\left(\|\e F_3\|_{L^2(\m)}+\|\e F_4\|_{L^2(\m)}+\|F_5\|_{C^0(\overline D)}\right)^2\\
&+\iint_{D}\mu_1\bigg((\p_r h_1)^2+(\p_\th h_1)^2+(\p_z h_1)^2\bigg)(r_0,\th, z)\de \th \de z
+C\iiint_{\Omega_t}\left((\p_r h_1)^2+(\p_\th h_1)^2+(\p_z h_1)^2\right.\\
&\left.+ (\p_r\e F_3)^2+(\p_{\th}^2\psi)^2+(\p_{\th z}^2\psi)^2+(\p_{z}^2\psi)^2\right)\mathrm{d}r\mathrm{d}\theta \de z.
\end{split}
\end{equation}
The estimate constant $C>0 $ in \eqref{3-26}-\eqref{3-27} depends only on $(b_0,   \A_0, U_{0},S_{0},E_{0},\th_0,r_0,r_1,\epsilon_0)$.
\par Note that $h_1(r_0,\th,z)=F_5(\th,z)$. Then one has
\begin{equation}\label{3-28}
 \iint_{D}\bigg((\p_\th h_1)^2+(\p_z h_1)^2\bigg)(r_0,\th,z)\de \th\de z= \iint_{D}(|\p_\th F_5|^2+|\p_z F_5|^2)(\th,z)\de \th \de z\leq C\|F_5\|_{H^1(D)}^2.
 \end{equation}
In addition,  \eqref{3-24} yields that
   \begin{equation*}
   \begin{aligned}
   &\iint_{D}\bigg(\e A_{11}(\p_r h_1)^2+2\e A_{12}\p_{\th}h_1\p_r h_1+
 2\e A_{13}\p_{z}h_1\p_r h_1
\bigg)(r_0,\th,z) \de \th\de z\\
 &=\iint_{D}\bigg((-\e A_{22}\p_{\th}^2\psi-2\e A_{23}\p_{\th z}^2\psi
  -\e A_{33}\p_{z}^2\psi-\x a_1h-\x b_1\p_r\h\Psi-\x b_2\h\Psi+\e F_3)\p_r h\bigg)(r_0,\th,z) \de \th \de z.
 \end{aligned}
\end{equation*}
 Owing to $ \psi(r_0,\th,z)=\p_r \h\Psi(r_0,\th,z)=0$, it follows from   the Cauchy inequality and the trace theorem  together with  \eqref{3-8},    \eqref{3-23} and \eqref{3-28}    that
\begin{equation}\label{3-29}
  \iint_{D}(\p_r h)^2(r_0,\th,z )\de \th \de z\leq C\left(\|\e F_3\|_{H^1(\m)}+\|\e F_4\|_{L^2(\m)}+\|F_5\|_{H^1(\overline D)}\right)^2.
 \end{equation}
 Then collecting the estimates \eqref{3-27}-\eqref{3-29} leads to
 \begin{equation}
\label{3-30}
\begin{split}
&\iint_{D}\frac{\mu_1}{2}\bigg((\p_r h_1)^2+(\p_\th h_1)^2+(\p_z h_1)^2\bigg)(t,\th,z)\de \th \de z
\leq
C\left(\|\e F_3\|_{H^1(\m)}+\|\e F_4\|_{L^2(\m)}+\|F_5\|_{H^1(\overline D)}\right)^2\\
&+C\iiint_{\Omega_t}\left((\p_r h_1)^2+(\p_\th h_1)^2+(\p_z h_1)^2
+(\p_{\th}^2\psi)^2+(\p_{\th z}^2\psi)^2+(\p_{z}^2\psi)^2\right)\mathrm{d}r\mathrm{d}\theta \de z.
\end{split}
\end{equation}
The estimate constant $C>0 $ in \eqref{3-28}-\eqref{3-30} depends only on $(b_0,   \A_0, U_{0},S_{0},E_{0},\th_0,r_0,r_1,\epsilon_0)$.
 \par Next, we estimate $$\iiint_{\Omega_t}\left(
(\p_{\th}^2\psi)^2+(\p_{\th z}^2\psi)^2+(\p_{z}^2\psi)^2\right)\mathrm{d}r\mathrm{d}\theta \de z. $$
For each $ r\in(r_0,r_1) $,  there holds
\begin{equation}\label{3-31-t}
\begin{aligned}
& \e A_{22}\p_{\th}^2\psi+2\e A_{23}\p_{\th z}^2\psi
  +\e A_{33}\p_{z}^2\psi\\
 &=-\e A_{11}\p_r^2 \psi-2\e A_{12}\p_{r\th}^2\psi-
 2\e A_{13}\p_{rz}^2\psi-\x a_1\p_r\psi -\x b_1\p_r\h\Psi-\x b_2\h\Psi+\e F_3, \ \ {\rm{in}}\ \ D.
 \end{aligned}
 \end{equation}
 Then  it follows from \eqref{2-7-e-e} that   \eqref{3-31-t} can be regarded  as a two-dimensional elliptic equation for $ \psi $ in $D $.
Therefore, for each $ r\in(r_0,r_1) $, we consider the following elliptic problem:
 \begin{equation}\label{3-31}
 \begin{cases}
\x A_{22}\p_{\th}^2\psi
  +\x A_{33}\p_{z}^2\psi=\e \mf_3,\ \ &{\rm{in}}\ \ D,\\
  \p_\th \psi(r,\pm\th_0,z)=0, \ \  &{\rm{on}}\ \ \Gamma_{\th_0}^\pm,\\
   \p_z \psi(r,\th,\pm 1)=0, \ \  &{\rm{on}}\ \ \Gamma_{1}^\pm,\\
  \end{cases}
\end{equation}
where
\begin{equation*}
\begin{aligned}
\e \mf_3&=\e F_3-\x a_1h_1-\x b_1\p_r\h\Psi-\x b_2\h\Psi-\e A_{11}\p_{r}h_1-2\e A_{12}\p_{\th}h+
 2\e A_{13}\p_{z}h+(\x A_{22}-\e A_{22})\p_{\th}^2\psi\\
  &\quad +(\x A_{33}-\e A_{33})\p_{z}^2\psi-2\e A_{23}\p_{z\th}^2\psi.\\
  \end{aligned}
  \end{equation*}
Then one can  apply the   classical elliptic theory   in \cite{GT98} to derive that
   \begin{equation}
\label{3-32}
 \begin{aligned}
  &\iint_{D}(\p_{\th}^2\psi+\p_{\th z}^2\psi+\p_{z}^2\psi)^2
  \mathrm{d}\theta \de z
 \leq C\iint_{D}\e \mf_3^2 \mathrm{d}\theta \de z.\\
\end{aligned}
\end{equation}
Integrating the above inequality with respect to $r$ over $[r_0,t]$ and combining  the Cauchy inequality  and \eqref{2-7-f} give that
\begin{equation}\label{3-33}
 \begin{aligned}
&\iiint_{\m_t}\left((\p_{\th}^2\psi)^2+(\p_{\th z}^2\psi)^2+(\p_{z}^2\psi)^2\right)\mathrm{d}r\mathrm{d}\theta \de z
\leq C\iiint_{\m_t}\left(\e F_3^2+h_1^2+\h\Psi^2+(\p_r\h\Psi)^2\right)\mathrm{d}r\mathrm{d}\theta \de z\\
&+C\iiint_{\m_t}\left((\p_r h_1)^2+(\p_\th h_1)^2+(\p_z h_1)^2\right)\mathrm{d}r\mathrm{d}\theta \de z
+C\delta^\ast\iiint_{\m_t}\left((\p_{\th}^2\psi)^2+(\p_{\th z}^2\psi)^2+(\p_{z}^2\psi)^2\right)\mathrm{d}r\mathrm{d}\theta \de z,
\end{aligned}
\end{equation}
where the constant  $C>0 $ depends only on $(b_0,   \A_0, U_{0},S_{0},E_{0},\epsilon_0)$.
 Let $ \delta_2^\ast\in(0,\delta_1^{\ast}] $ be   sufficiently small depending only on $(b_0,   \A_0, U_{0},S_{0},E_{0},\epsilon_0)$ such that
 $C\delta^\ast \leq  C\delta_2^{\ast}\leq \frac12. $ Then
   \eqref{3-33} together with  \eqref{3-8} and     \eqref{3-23} yields that
 \begin{equation}\label{3-34}
 \begin{aligned}
&\iiint_{\Omega_t}\left((\p_{\th}^2\psi)^2+(\p_{\th z}^2\psi)^2+(\p_{z}^2\psi)^2\right)\mathrm{d}r\mathrm{d}\theta \de z\\
&\leq
2C\left(\|\e F_3\|_{L^2(\m)}+\|\e F_4\|_{L^2(\m)}+\|F_5\|_{L^2(\overline D)}\right)^2+2C\iiint_{\Omega_t}\left((\p_r h_1)^2+(\p_\th h_1)^2+(\p_z h_1)^2\right)\mathrm{d}r\mathrm{d}\theta \de z.
\end{aligned}
\end{equation}

\par Collecting the estimates \eqref{3-30} and \eqref{3-34} leads to
\begin{equation}\label{3-35}
\begin{aligned}
 &\frac{\de\left(\iiint_{\Omega_t}\left((\p_r h_1)^2+(\p_\th h_1)^2+(\p_z h_1)^2\right)\de r\mathrm{d}\theta \de z\right)}
 {\de t} \\
 &\leq \mc_1\left(\|\e F_3\|_{H^1(\m)}+\|\e F_4\|_{L^2(\m)}+\|F_5\|_{H^1(\overline D)}\right)^2+\mc_2\iiint_{\Omega_t}\left((\p_r h_1)^2+(\p_\th h_1)^2+(\p_z h_1)^2\right)\de r\mathrm{d}\theta \de z.
 \end{aligned}
\end{equation}
Then the  Gronwall inequality yields that
\begin{equation}\label{3-36}
\begin{aligned}
 \iiint_{\Omega}\left((\p_r h_1)^2+(\p_\th h_1)^2+(\p_z h_1)^2\right)\de r\mathrm{d}\theta \de z
 \leq \mc_3\left(\|\e F_3\|_{H^1(\m)}+\|\e F_4\|_{L^2(\m)}+\|F_5\|_{H^1(\overline D)}\right)^2.
 \end{aligned}
\end{equation}
The estimate constants  $\mc_i>0 \ (i=1,2,3) $ depend only on $(b_0,   \A_0, U_{0},S_{0},E_{0},\th_0,r_0,r_1,\epsilon_0)$. Therefore, \eqref{3-36} together with \eqref{3-34}  yields
\begin{equation}\label{H2-estimate-vm-final}
  \|\psi\|_{H^2(\m)}\leq C\left(\|\e F_3\|_{H^1(\m)}+\|\e F_4\|_{L^2(\m)}+\|F_5\|_{H^1(\overline D)}\right)
\end{equation}
for the constant $C>0$ depending only on $(b_0, \A_0, U_{0},S_{0},E_{0},\th_0,\epsilon_0,r_0,r_1,\epsilon_0)$.
\par { \bf Step 2. A priori $H^3$-estimate.}
\par A priori $H^3$-estimate  can be directly given by the method
of reflection just as in a priori $H^2$-estimate. But a lengthy computation must be done in order to verify it.
So, in this step, we only describe main differences in establishing the estimate for higher order
weak derivatives of $(\psi,\h\Psi)$ in $\m $.
\par 
 Note that the extension of $-\x a_2\p_r\psi+\e F_4$ given by even reflection about $\Gamma_{\th_0}^\pm$  ( or $\Gamma_{1}^\pm$) is $H^1$ across $\Gamma_{\th_0}^\pm$  (or $\Gamma_{1}^\pm$) without any additional compatibility condition. Therefore, back to \eqref{3-22}, applying   the method of reflection  and \cite[Theorems 8.8 and 8.12]{GT98} and combining \eqref{3-23} and \eqref{H2-estimate-vm-final} yield
 \begin{eqnarray}\label{H^4-5-1}
\Vert \h\Psi\Vert_{H^3(\Omega)}
\leq C\left(\|\e F_3\|_{H^1(\m)}+\|\e F_4\|_{H^1(\m)}+\|F_5\|_{H^1(\overline D)}\right)
\end{eqnarray}
for the constant $C>0 $ depending only on $(b_0, \A_0, U_{0},S_{0},E_{0},\th_0,r_0,r_1,\epsilon_0)$.
\par In order to get a priori $H^3$
estimate of $\psi$, a
similar condition is required.  For each $ r\in(r_0,r_1) $, the function $\mf_3(r,\cdot) $ given in \eqref{3-31} is defined on $ D $.
  As a function of $(\th,z) $, it is easy to check that  the extension of $\mf_3$ given by even reflection about $\Gamma_{\th_0}^\pm$  ( or $\Gamma_{1}^\pm$) is $H^1$ across $\Gamma_{\th_0}^\pm$  (or $\Gamma_{1}^\pm$) without any additional compatibility condition. Therefore, the similar argument as in Step 2 yields that the following $H^3$  estimate of  $\psi$:
\begin{eqnarray}\label{H^4-5}
\Vert\psi\Vert_{H^3(\Omega)}
\leq C\left(\|\e F_3\|_{H^2(\m)}+\|\e F_4\|_{H^1(\m)}+\|F_5\|_{H^2(\overline D)}\right),
\end{eqnarray}
 where the constant $C>0 $ depends only on $(b_0, \A_0, U_{0},S_{0},E_{0},\th_0,r_0,r_1,\epsilon_0)$.

\par { \bf Step 3. A priori $H^4$-estimate.}
\par Firstly, a simple computation shows
\begin{equation*}
\begin{cases}
\p_{\th}(-\x a_2\p_r\psi+\e F_4)=0, \ \ {\rm{on}} \ \ \Gamma_{\th_0}^\pm, \\
\p_{z}(-\x a_2\p_r\psi+\e F_4)=0, \ \ {\rm{on}} \ \ \Gamma_{1}^\pm.\\
\end{cases}
\end{equation*}
Then  the extension of $-\x a_2\p_r\psi+\e F_4$ given by even reflection about $\Gamma_{\th_0}^\pm$  ( or $\Gamma_{1}^\pm$) is $H^2$ across $\Gamma_{\th_0}^\pm$  (or $\Gamma_{1}^\pm$). Therefore, a priori $H^4$-estimate of $\h\Psi$ can be obtained by applying \cite[Theorems 8.8 and 8.12]{GT98} and  the reflection argument. Furthermore, with the aid of \eqref{H^4-5}, one has
\begin{eqnarray}\label{H^4-5-1-1-z}
\Vert\h\Psi\Vert_{H^4(\Omega)}
\leq C\left(\|\e F_3\|_{H^2(\m)}+\|\e F_4\|_{H^2(\m)}+\| F_5\|_{H^2(\overline D)}\right).
\end{eqnarray}
\par Next, for each $ r\in(r_0,r_1) $,  it follows from the expression of $\mf_3 $ together with direct computations that
\begin{equation*}
\begin{cases}
\p_{\th}\mf_3=0, \ \ {\rm{on}} \ \ \Gamma_{\th_0}^\pm, \\
\p_{z}\mf_3=0, \ \ {\rm{on}} \ \ \Gamma_{1}^\pm.\\
\end{cases}
\end{equation*}
 Therefore, as a function of $(\th,z) $, the extension of $\mf_3$ given by even reflection about $\Gamma_{\th_0}^\pm$  ( or $\Gamma_{1}^\pm$) is $H^2$ across $\Gamma_{\th_0}^\pm$  (or $\Gamma_{1}^\pm$). Adapting the argument in Step 2 and using the estimate \eqref{H^4-5-1-1-z} yield
\begin{eqnarray}\label{H^4-5-1-1}
\Vert\psi\Vert_{H^4(\Omega)}
\leq C\left(\|\e F_3\|_{H^3(\m)}+\|\e F_4\|_{H^2(\m)}+\|F_5\|_{H^3(\overline D)}\right).
\end{eqnarray}
The above constant $ C>0 $   depends only on $(b_0, \A_0, U_{0},S_{0},E_{0},\th_0,r_0,r_1,\epsilon_0)$. The proof of Lemma \ref{pro5} is completed.
\end{proof}
\subsection{The well-posedness of the linearized  problem}\noindent
 \par With the aid of  Lemmas \ref{pro4} and \ref{pro5}, the following well-posedness of the linearized  problem \eqref{3-6} can be obtained.
\begin{proposition}\label{pro6}

  Fix  $\epsilon_0\in(0,R_0)$,   let  $\bar r_1^\ast\in(r_0,r_0+R_0-\epsilon_0]$ and $ \delta_2^\ast\in(0, {\delta_1^\ast}] $ be from Lemma \ref{pro4} and  Lemma \ref{pro5}, respectively.    For  $r_1\in(r_0,\bar r_1^\ast)$ and $\delta^\ast\leq \delta_2^\ast$, let the iteration set $\ma_{\delta^\ast, r_1} $ be given by \eqref{3-4}. Then for each $ (\e\psi,\e\Psi)\in \ma_{\delta^\ast, r_1} $, the  linear boundary value problem \eqref{3-6}    has a unique solution  $ (\psi,\h\Psi)\in \left(H^4(\m)\right)^2$ satisfying
\begin{eqnarray}\label{H^4-1}
&&\Vert\psi\Vert_{H^4(\Omega)}
\leq C\left(\|\e F_3\|_{H^3(\m)}+\|\e F_4\|_{H^2(\m)}+\|F_5\|_{H^3(\overline D)}\right),\\\label{H^4-2}
&&\Vert\h\Psi\Vert_{H^4(\Omega)}
\leq C\left(\|\e F_3\|_{H^2(\m)}+\|\e F_4\|_{H^2(\m)}+\|F_5\|_{H^2(\overline D)}\right),
\end{eqnarray}
where the constant $C>0$ depends only on $(b_0,   \A_0, U_{0},S_{0},E_{0},\th_0,r_0,r_1,\epsilon_0)$.
 Furthermore, the solution $(\psi,{\Psi})$ satisfies the compatibility conditions
\begin{equation}\label{H^4-1-c}
\begin{cases}
\p_{\th}^k\psi=\p_{\th}^k\h\Psi=0, \ \ {\rm{on}} \ \Gamma_{\th_0}^\pm,\  \  {\rm{for}} \ \ k=1,3,\\
\p_{z}^k\psi=\p_{z}^k\h\Psi=0, \ \ {\rm{on}} \ \Gamma_{1}^\pm,\ \ \  {\rm{for}} \ \ k=1,3.
\end{cases}
\end{equation}
\end{proposition}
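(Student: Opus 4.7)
\emph{Proof proposal.} Uniqueness is immediate from Lemma \ref{pro4}: the difference of any two $H^4$ solutions of \eqref{3-6} satisfies the same system with $\e F_3=\e F_4=F_5=0$, hence vanishes in $H^1(\m)$. Since Lemma \ref{pro5} already provides the a priori estimates \eqref{H^4-1-1-1}--\eqref{H^4-2-2} that coincide with the desired \eqref{H^4-1}--\eqref{H^4-2}, the whole proposition reduces to constructing an $H^4$ solution that also satisfies \eqref{H^4-1-c}.

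The plan is a Galerkin approximation in the tangential variables $(\th,z)$ built on the cosine basis
$$e_{km}(\th,z)=\cos\!\Bigl(\tfrac{k\pi(\th+\th_0)}{2\th_0}\Bigr)\cos\!\Bigl(\tfrac{m\pi(z+1)}{2}\Bigr),\qquad k,m\in\mathbb N_0,$$
which is complete and orthogonal in $L^2(D)$ and satisfies $\p_\th^j e_{km}|_{\Gamma_{\th_0}^\pm}=\p_z^j e_{km}|_{\Gamma_1^\pm}=0$ for every odd $j$. Hence the boundary conditions on $\Gamma_{\th_0}^\pm$ and $\Gamma_1^\pm$ in \eqref{3-6} and the higher-order compatibility \eqref{H^4-1-c} are automatically honored inside every truncation $V_N=\mathrm{span}\{e_{km}:k,m\le N\}$. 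The compatibility \eqref{1-t-2} of the data together with the symmetries built into $\ma_{\delta^\ast,r_1}$ via \eqref{3-4} ensure that $\e F_3,\e F_4,F_5$ expand cleanly in $\{e_{km}\}$. For each $N$, seek
$$\psi_N=\sum_{k,m\le N}a_{km}(r)\,e_{km},\qquad \h\Psi_N=\sum_{k,m\le N}b_{km}(r)\,e_{km},$$
and determine the coefficients by $L^2(D)$-testing \eqref{3-6} against every $e_{k'm'}$ with $k',m'\le N$.

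This reduces the PDE system to a coupled $2(N+1)^2$-dimensional system of second-order linear ODEs in $r$. The $a$-block is hyperbolic in $r$: the principal-part matrix $M^a_{km,k'm'}(r)=\int_D \e A_{11}\,e_{km}e_{k'm'}\,\de\th\de z$ is negative definite because $\e A_{11}\le -\mu_1<0$, with Cauchy data $a_{km}(r_0)=0$ and $a_{km}'(r_0)=\langle F_5,e_{km}\rangle_{L^2(D)}$. The $b$-block is elliptic with mixed data $b_{km}'(r_0)=0$, $b_{km}(r_1)=0$, and its projected zeroth-order term on the $(k,m)$-mode is $-\x b_3-\tfrac{k^2\pi^2}{4\th_0^2 r^2}-\tfrac{m^2\pi^2}{4}<0$, which makes the corresponding BVP uniquely solvable by standard linear BVP theory. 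The coupling is closed by a contraction on $[r_0,r_1]$: fix $\{b_{km}\}$, solve the $a$-IVP by standard linear ODE theory, substitute it into the $b$-block and solve the BVP, then iterate; the smallness $r_1<\x r_1^\ast$ makes this map contractive. The resulting $(\psi_N,\h\Psi_N)\in C^\infty(\overline\m)$ satisfy all hypotheses of Lemmas \ref{pro4} and \ref{pro5}, so those lemmas' estimates hold for them uniformly in $N$. A standard weak compactness argument in $(H^4(\m))^2$ then yields a solution $(\psi,\h\Psi)$ with the bounds \eqref{H^4-1}--\eqref{H^4-2} and with \eqref{H^4-1-c} preserved in the sense of trace.

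The main obstacle is checking that the multiplier-plus-reflection a priori arguments of Lemmas \ref{pro4}--\ref{pro5} survive the Galerkin truncation with constants independent of $N$. This is in fact automatic: the multiplier $Z(r)$ of Lemma \ref{pro4} depends only on $r$, and the even reflections across $\Gamma_{\th_0}^\pm$ and $\Gamma_1^\pm$ exploited in Lemma \ref{pro5} commute with projection onto $V_N$, so the integral identities used in those proofs remain valid termwise for $(\psi_N,\h\Psi_N)$ and sum to the desired uniform bounds; weak convergence and lower semicontinuity of the $H^4$-norm then complete the passage to the limit.
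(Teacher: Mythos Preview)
Your overall strategy---Galerkin approximation in the cosine basis, uniform a priori estimates from Lemmas \ref{pro4}--\ref{pro5}, then weak compactness in $H^4$---is exactly the paper's. Two points deserve comment.

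First, a minor omission: the paper inserts a preliminary mollification step (its Step~1), extending $(\e\psi,\e\Psi)$ evenly across $\Gamma_{\th_0}^\pm$ and $\Gamma_1^\pm$ and convolving with a smooth kernel so that the coefficients $\e A_{ij}^\eta$ become $C^\infty(\overline\m)$. This is needed because Lemmas \ref{pro4} and \ref{pro5} are stated for smooth coefficients; after solving the mollified problem one passes to the limit $\eta\to 0$. You skip this, and while the estimates likely survive for $H^3$ coefficients, it is cleanest to restore the mollification.

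Second, and more substantively: your mechanism for solving the truncated ODE system is a decoupled iteration (fix $\{b_{km}\}$, solve the $a$-IVP, feed into the $b$-BVP, repeat), and you assert that ``the smallness $r_1<\bar r_1^\ast$ makes this map contractive''. But $\bar r_1^\ast$ was fixed in Lemma \ref{pro4} for a different purpose---to make the multiplier $Z$ satisfy the four differential inequalities that yield the coupled $H^1$ estimate---and there is no reason it should coincide with the threshold needed for your iteration to contract. A contraction factor can indeed be extracted (roughly $O((r_1-r_0)^{1/2})$, uniformly in $N$, by combining the wave-energy bound for $L_1^{-1}$ with elliptic regularity for $L_2^{-1}$), but you have not shown this, and it may require shrinking $r_1$ beyond $\bar r_1^\ast$. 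The paper sidesteps the issue entirely: it rewrites the truncated system as a first-order system $\mathbf X'=\mathbb A\mathbf X+\mathbf H$ with the mixed endpoint conditions, reformulates it as $(\mathbf I-\mathfrak E)\mathbf X=\text{data}$ for a compact integral operator $\mathfrak E$ on $C^1([r_0,r_1])$, and invokes the Fredholm alternative. Injectivity of $\mathbf I-\mathfrak E$ is precisely the statement that the homogeneous Galerkin system has only the trivial solution, which follows immediately from the $H^1$ estimate \eqref{3-8}---and \emph{that} is what $r_1<\bar r_1^\ast$ guarantees. Thus the paper converts the uniqueness you already have directly into existence, with no separate smallness condition to verify.
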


 The
 proof is based on the  Galerkin approximations and a limiting argument.
We  divide  three steps  to prove Proposition \ref{pro6}.

\par  { \bf Step 1. Approximation of \eqref{3-6} by a problem with smooth coefficients.}
\par Note that $ (\e\psi,\e\Psi)\in  \left(H^4(\m)\right)^2 $ satisfies
\begin{equation*}
\begin{cases}
\p_{\th}^k\e\psi=\p_{\th}^k\e\Psi=0, \ \ {\rm{on}} \ \Gamma_{\th_0}^\pm,\  \  {\rm{for}} \ \ k=1,3,\\
\p_{z}^k\e\psi=\p_{z}^k\e\Psi=0, \ \ {\rm{on}} \ \Gamma_{1}^\pm,\ \ \  {\rm{for}} \ \ k=1,3.
\end{cases}
\end{equation*}
Then one can extend $ (\e\psi,\e\Psi) $ onto $ \Omega^e=(r_0,r_1)\times(-3\th_0,3\th_0)\times(-3,3)$ by
\begin{equation*}
(\e \psi^e,\e \Psi^e)(r,\theta,z)=
\begin{cases}
(\e \psi,\e \Psi)(r,\theta,z), \ \ &  (r,\th,z)\in (r_0,r_1)\times[-\theta_0,\theta_0]\times[-1,1],\\
(\e \psi,\e \Psi)(r,\theta,-2-z), \ \  &  (r,\th,z)\in (r_0,r_1)\times[-\theta_0,\theta_0]\times(-3,-1),\\
(\e \psi,\e \Psi)(r,\theta,2-z), \ \ &  (r,\th,z) \in(r_0,r_1)\times[-\theta_0,\theta_0]\times(1,3),\\
(\e \psi,\e \Psi)(r,-2\theta_0-\theta,z), \ \  &  (r,\th,z)\in (r_0,r_1)\times(-3\theta_0,-\theta_0)\times[-1,1],\\
(\e \psi,\e \Psi)(r,-2\theta_0-\theta,-2-z), \ \  &  (r,\th,z)\in (r_0,r_1)\times(-3\theta_0,-\theta_0)\times(-3,-1),\\
(\e \psi,\e \Psi)(r,-2\theta_0-\theta,2-z), \ \  &  (r,\th,z)\in (r_0,r_1)\times(-3\theta_0,-\theta_0)\times(1,3),\\
(\e \psi,\e \Psi)(r,2\theta_0-\theta,z), \ \  &  (r,\th,z)\in (r_0,r_1)\times(\theta_0,3\theta_0)\times[-1,1],\\
(\e \psi,\e \Psi)(r,2\theta_0-\theta,-2-z), \ \  &  (r,\th,z)\in (r_0,r_1)\times(\theta_0,3\theta_0)\times(-3,-1),\\
(\e \psi,\e \Psi)(r,2\theta_0-\theta,2-z), \ \  &  (r,\th,z)\in (r_0,r_1)\times(\theta_0,3\theta_0)\times(1,3).\\
\end{cases}
\end{equation*}
 It is easy to check that $(\e \psi^e,\e \Psi^e)\in \left(H^4(\m^e)\right)^2$. For $  (r,\th,z)\in \m^e $, we  define  $(\e \psi^\eta,\e\Psi^\eta) $  by the convolution of $(\e \psi^e,\e \Psi^e) $ with a  radially symmetric  standard mollifier $\chi_\eta $ with a compact support in a disk of radius $\eta>0$, i.e,
$$\e \psi^\eta:=\e \psi^e*\chi_\eta ,\ \ \e \Psi^\eta:=\e \Psi^e*\chi_\eta .$$
Then we have
$(\e \psi^\eta,\e\Psi^\eta)\in \left(C^{\infty}(\overline{\Omega})\right)^2$
and  the following compatibility conditions:
\begin{equation*}
\begin{cases}
\p_{\th}^k\e\psi^\eta=\p_{\th}^k\e\Psi^\eta=0, \ \ {\rm{on}} \ \Gamma_{\th_0}^\pm,\  \  {\rm{for}} \ \ k=1,3,\\
\p_{z}^k\e\psi^\eta=\p_{z}^k\e\Psi^\eta=0, \ \ {\rm{on}} \ \Gamma_{1}^\pm,\  \  \ {\rm{for}} \ \ k=1,3.
\end{cases}
\end{equation*}
 Furthermore, as $ \eta\rightarrow0 $, one derives
 \begin{equation*}
 \|\e \psi^{\eta}-\e \psi\|_{H^4(\Omega)}\rightarrow0 \ \ {\rm{and}} \ \ \|\e \Psi^{\eta}-\e \Psi\|_{H^4(\Omega)}\rightarrow0.
 \end{equation*}
\par For  any fixed $\eta>0$, define
 \begin{equation*}
  \begin{aligned}
 &\e A_{ii}^{\eta}(r,\th,z):=A_{ii}(r,\th,z,\n\e\psi^{\eta},\e\Psi^{\eta}),\quad \ \quad (r,\th,z)\in \Omega, \  i=1,2,3,, \\
&\e A_{ij}^{\eta}(r,\th,z):=A_{ij}(r,\th,z,\n\e\psi^{\eta}),\quad \quad \quad \ \ \   (r,\th,z)\in \Omega,\  i\neq j =1,2,3, \\
 &\e F_3^{\eta}(r,\th,z):=F_3 (r,\th,z,\n\e\psi^{\eta},\n\e\Psi^{\eta},\e\Psi^{\eta}), \    (r,\th,z)\in \Omega, \\
 &\e F_4^{\eta}(r,\th,z):=F_4 (r,\th,z,\n\e\psi^{\eta},\e\Psi^{\eta}),  \ \ \qquad    (r,\th,z)\in \Omega.
 \end{aligned}
 \end{equation*}
 It is easy to see that $\e A_{ij}^{\eta}\in C^{\infty}(\overline\m)$ for $ i,j=1,2,3 $ and $\e F_i^{\eta}\in C^2(\overline\m)$ for $ i=3,4 $. Then a  approximation problem with smooth coefficients of \eqref{3-6} is obtained as follows:
  \begin{equation}\label{3-41}
 \begin{cases}
\e L_1^{\eta}(\psi,\h\Psi)= \e A_{11}^n\p_r^2 \psi+2\e A_{12}^n\p_{r\th}^2 \h\psi+
 2 \e A_{13}^{\eta}\p_{rz}^2 \psi
 +  \e A_{22}^{\eta}\p_{\th}^2 \psi+2\e A_{23}^{\eta}\p_{\th z}^2 \psi +\e A_{33}^{\eta}\p_{z}^2 \psi\\
\qquad\qquad\ \ +\x a_1\p_r\psi+\x b_1\p_r\h\Psi+\x b_2(r)\h\Psi=\e F_3^{\eta},\ \ &{\rm{in}}\ \ \Omega,\\
\e L_2(\psi,\h\Psi)= \left(\p_r^2+\frac 1 r\p_r+\frac{1}{r^2}\p_{\th}^2+\p_z^2\right) \h\Psi+\x a_2\p_r \psi-\x b_3 \h\Psi=\e F_4^{\eta},\ \ &{\rm{in}}\ \ \Omega,\\
 \p_r\psi(r_0,\th, z)=F_5(\th, z),\ \psi(r_0,\th, z)=\p_r\h\Psi(r_0,\th, z)=0,\ \ &{\rm{on}}\ \ \Gamma_{en},\\
\h\Psi(r_1,\th,z)=0, \ \ &{\rm{on}}\ \ \Gamma_{ex},\\
\p_\th\psi(r,\pm\th_0,z)=\p_\th\h\Psi(r,\pm\th_0,z)=0, \ \ &{\rm{on}}\ \ \Gamma_{\th_0}^\pm,\\
\p_z\psi(r,\th,\pm 1)=\p_z\h\Psi(r,\th,\pm 1)=0, \ \ &{\rm{on}}\ \ \Gamma_{1}^\pm.\\
 \end{cases}
 \end{equation}
 \par  { \bf Step 2. Galerkin approximations.}
\par  Firstly, For each $i\in\mathbb{Z}^+$, define
\begin{equation}\label{or-1}
\vartheta_0(\theta)=\sqrt{\frac{1}{2\theta_0}}, \ \ \vartheta_i(\theta)=
\sqrt{\frac{1}{\theta_0}}\cos\left(\frac{i\pi}{\theta_0}\theta\right), \ i=1,2, \cdots.
\end{equation}
Then the set $=\{\vartheta_i\}_{i=0}^{\infty}$   is the collection of all eigenfunctions associated to the eigenvalue problem
\begin{equation*}
\begin{cases}
-\vartheta^{\prime\prime}(\theta)=\mu \vartheta(\theta), \\
\vartheta^{\prime}(\pm\theta_0)=0,
\end{cases}
\end{equation*}
and   the corresponding eigenvalues are $\left\{\mu_i:\mu_i=\left(\frac{i\pi}{\theta_0}\right)^2\right\}$. It is easy to see that
 the set $\{\vartheta_i(\th)\}_{i=0}^{\infty}$ forms an orthonormal basis in $L^2(-\th_0,\th_0)$ and an orthogonal basis in $H^1(-\th_0,\th_0)$.
\par Next, for each $j\in\mathbb{Z}^+$, define
\begin{equation}\label{or-2}
\beta_0(z)=\frac{1}{\sqrt{2}},\ \
\beta_j(z)=\cos\left(j{\pi}z\right),\ j=1,2, \cdots.
\end{equation}
 The set $\{\beta_j\}_{j=0}^{\infty}$   is the collection of all eigenfunctions to the eigenvalue problem
\begin{equation*}
\begin{cases}
-\beta^{\prime\prime}(z)=\tau\beta(z), \\
\beta^{\prime}(\pm 1)=0,
\end{cases}
\end{equation*}
and   the corresponding eigenvalues are $\{\tau_j:\tau_j=\left({j\pi}\right)^2\}$.
Then the set $\mathcal{S}:=\{\vartheta_i(\th)\beta_j(z)\}_{i,j=0}^{\infty}$ forms a complete orthonormal basis in $L^2((-\th_0,\th_0))\times (-1,1))$ and an orthogonal basis in $H^1((-\th_0,\th_0)\times(-1,1))$.

\par  For  any fixed $\eta>0$, we consider the linear boundary value problem \eqref{3-41}. For each $m=1,2,\cdots$, let $(\psi_m^\eta, \Psi_m^\eta)$ be of the form
\begin{equation}\label{m}
\psi_m^\eta(r,\theta,z)=\sum_{i,j=0}^{m}\my_{i,j}^\eta(r)\vartheta_i(\th)\beta_j(z),\ \  \h\Psi_m^\eta(r,\theta,z)=\sum_{i,j=0}^{m}\ml_{i,j}^\eta(r)\vartheta_i(\th)\beta_j(z), \ \ (r,\th,z)\in \m.
\end{equation}
 Then we find a solution $(\psi_m^\eta,\h\Psi_m^\eta)$ satisfying
\begin{equation}\label{Leqs4}
\begin{cases}
\begin{aligned}
&\iint_D \e{L}^{\eta}_1(\psi_m^\eta, \h\Psi_m^\eta)\vartheta_{k_1}(\th)\beta_{k_2}(z)\de \th \de z= \iint_D\e F^{\eta}_3\vartheta_{k_1}(\th)\beta_{k_2}(z)\de \th \de z,\quad  \ \ & r\in (r_0, r_1),\\
&\iint_D \e{L}_2(\psi_m^\eta, \h\Psi_m^\eta)\vartheta_{k_1}(\th)\beta_{k_2}(z)\de \th \de z= \iint_D\e F^{\eta}_4\vartheta_{k_1}(\th)\beta_{k_2}(z)\de \th \de z,\quad\ \   & r\in (r_0, r_1),\\
&\psi_m^\eta=\partial_r\h\Psi_m^\eta=0,  \qquad\qquad\qquad\qquad\qquad\qquad\qquad \qquad\quad\ \ \ \ \quad &\hbox{on} \ \ \Gamma_{en},\\
&\partial_r\psi_m^\eta=\sum_{i,j=0}^{m}\vartheta_i(\th)\beta_j(z)\iint_D F_5\vartheta_i(\th)\beta_j(z)\de \th \de z,  \ &\hbox{on} \ \ \Gamma_{en},\\
&\h\Psi_m^\eta=0,\ \  &\hbox{on}\ \ \Gamma_{ex},\\
&\p_\th\psi_m^\eta=\p_\th\h\Psi_m^\eta=0,     \quad &\hbox{on}\ \ \Gamma_{\th_0}^\pm,\\
&\p_z\psi_m^\eta=\p_z\h\Psi_m^\eta=0,  \quad &\hbox{on}\ \ \Gamma_1^\pm,
\end{aligned}
\end{cases}
\end{equation}
for  all $k_1=0,1,...,m$ and $k_2=0,1,...,m$.  If $\psi_m^\eta  $ and $\h\Psi_m^\eta$ are smooth functions and solve \eqref{Leqs4},  one obtains
\begin{equation*}
\begin{split}
&-\iiint_{\Omega}\e{L}^\eta_1(\psi_m^\eta,\h{\Psi}_m^\eta)Z(r)\partial_r\psi_m^\eta
\mathrm{d}r\mathrm{d}\theta \de z
-\iiint_{\Omega}\e{L}_2(\psi_m^\eta,\h{\Psi}_m^\eta)\h\Psi_m^\eta\mathrm{d}r\mathrm{d}\theta \de z\\
&=-\int_{r_0}^{r_1}Z(r)\iint_D\sum_{i,j=0}^{m}(\my_{i,j}^\eta)^{\prime}(r)
\e{L}^\eta_1(\psi_m^\eta, \h\Psi_m^\eta)\vartheta_{i}(\th)\beta_{j}(z)\de \th \de z\de r\\
&\quad -\int_{r_0}^{r_1}\iint_D\sum_{i,j=0}^{m}\ml_j^\eta(r)
\e{L}_2(\psi_m^\eta, \h\Psi_m^\eta)\vartheta_{i}(\th)\beta_{j}(z)\de \th \de z\de r\\
&=-\iiint_{\Omega}Z(r)\e F^\eta_3\partial_r\psi_m^\eta\mathrm{d}r\de \th \de z
-\iiint_{\Omega}\e F^\eta_4\h\Psi_m^\eta\mathrm{d}r \de \th \de z,
\end{split}
\end{equation*}
 where   the function $ Z $  is constructed in Lemma \ref{pro4}. Then it holds that
\begin{equation}\label{H1}
\Vert(\psi_m^\eta,\h\Psi_m^\eta)\Vert_{H^1(\Omega)}
\leq C\left(\Vert\e F_3^\eta\Vert_{L^2(\Omega)}+\Vert\e F_4^\eta\Vert_{L^2(\Omega)}
+\Vert F_5\Vert_{L^2(\overline D)}\right),
\end{equation}
where the constant $ C>0 $  is independent of $\eta$ and $m$.
\par Next, by the orthonormality of the set $\mathcal{S}$ in $L^2((-\th_0,\th_0)\times(-1,1))$, the problem \eqref{Leqs4} can be rewritten
\begin{equation}\label{3-42}
\begin{cases}
\begin{aligned}
&\sum_{i,j=0}^{m}\bigg(\me_{1,i,j}^{\eta,k_1,k_2}(r)(\my^\eta_{i,j})''(r)
+\me_{2,i,j}^{\eta,k_1,k_2}(r)(\my^\eta_{i,j})'(r)+\me_{3,i,j}^{\eta,k_1,k_2}(r)
(\my^\eta_{i,j})'(r)\\
& -\me_{4,i,j}^{\eta,k_1,k_2}(r)\my_{i,j}^\eta(r)
+\me_{5,i,j}^{\eta,k_1,k_2}(r)\my_{i,j}^\eta(r)-\me_{6,i,j}^{\eta,k_1,k_2}(r)
\my_{i,j}^\eta(r)\bigg)
 +\x a_1(r)(\my_{k_1,k_2}^\eta)'(r)\\
& +\x b_1(r)(\ml_{k_1,k_2}^\eta)'(r)
+\x b_2(r)\ml_{k_1,k_2}^\eta(r)=F^{\eta,k_1,k_2}_3(r), \ \ r\in (r_0, r_1),\\
&(\ml^\eta_{k_1,k_2})''(r)+\frac{1}{r}(\ml^\eta_{k_1,k_2})'(r)-\bigg(
\frac{1}{r^2}\left(\frac{k_1\pi}{\theta_0}\right)^2+(k_2\pi)^2\bigg)
\ml_{k_1,k_2}^\eta(r)\\
& +\x a_2(r)(\my^\eta_{k_1,k_2})'
-\x b_3(r)\ml_{k_1,k_2}^\eta(r)=F^{\eta,k_1,k_2}_4(r),\ \ \ \ \ \  r\in (r_0, r_1),\\
&\my_{k_1,k_2}^\eta(r_0)=(\ml_{k_1,k_2}^\eta)'(r_0)=0,\quad
(\my_{k_1,k_2}^\eta)'(r_0)=\iint_D F_5\vartheta_{k_1}(\th)\beta_{k_2}(z)\de \th \de z,\\
&\ml_{k_1,k_2}^\eta(r_1)=0,
\end{aligned}
\end{cases}
\end{equation}
where
\begin{equation*}
\begin{aligned}
 &\me_{1,i,j}^{\eta,k_1,k_2}(r)=\iint_D\e A_{11}^\eta \vartheta_i(\th)\beta_j(z)\vartheta_{k_1}(\th)\beta_{k_2}(z)\de \th \de z,\ \
 \me_{2,i,j}^{\eta,k_1,k_2}(r)=2\iint_D\e A_{12}^\eta \vartheta_i'(\th)\beta_j(z)\vartheta_{k_1}(\th)\beta_{k_2}(z)\de \th \de z,\\
 &\me_{3,i,j}^{\eta,k_1,k_2}(r)=2\iint_D\e A_{13}^\eta \vartheta_i(\th)\beta_j'(z)\vartheta_{k_1}(\th)\beta_{k_2}(z)\de \th \de z,\
 \me_{4,i,j}^{\eta,k_1,k_2}(r)=\left(\frac{i\pi}{\theta_0}\right)^2\iint_D\e A_{22}^\eta \vartheta_i(\th)\beta_j(z)\vartheta_{k_1}(\th)\beta_{k_2}(z)\de \th \de z,\\
 &\me_{5,i,j}^{\eta,k_1,k_2}(r)=2\iint_D\e A_{23}^\eta \vartheta_i'(\th)\beta_j'(z)\vartheta_{k_1}(\th)\beta_{k_2}(z)\de \th \de z,\
 \me_{6,i,j}^{\eta,k_1,k_2}(r)=(j\pi)^2\iint_D\e A_{33}^\eta \vartheta_i(\th)\beta_j(z)\vartheta_{k_1}(\th)\beta_{k_2}(z)\de \th \de z,\\
& F^{\eta,k_1,k_2}_3(r)=\iint_D\e F^\eta_3\vartheta_{k_1}(\th)\beta_{k_2}(z)\de \th \de z, \qquad \qquad\quad\ \ F^{\eta,k_1,k_2}_4(r)=\iint_D\e F^\eta_4\vartheta_{k_1}(\th)\beta_{k_2}(z)\de \th \de z.
 \end{aligned}
\end{equation*}
\par For each $m\in\mathbb{N}$, set
\begin{equation*}
\begin{split}
\mathbf{X}_1&:=(\my_{0,0}^\eta,...,\my_{m,m}^\eta), \ \ \mathbf{X}_2:=((\my_{0,0}^\eta)',...,(\my_{m,m}^\eta)'),\\
\mathbf{X}_3&:=(\ml_{0,0}^\eta,...,\ml_{m,m}^\eta), \ \ \mathbf{X}_4:=((\ml_{0,0}^\eta)',...,(\ml_{m,m}^\eta)'),\\
\mathbf{X}&:=(\mathbf{X}_1,\mathbf{X}_2,\mathbf{X}_3, \mathbf{X}_4)^{T},
\end{split}
\end{equation*}
and  define a projection mapping $\Pi_{i}\ (i=1,2)$ by
\begin{equation*}
\Pi_1(\mathbf{X})=(\mathbf{X}_1,\mathbf{X}_2,\mathbf{0},\mathbf{X}_4)^{T} \quad \hbox{and} \quad
\Pi_2(\mathbf{X})=(\mathbf{0},\mathbf{0},\mathbf{X}_3,\mathbf{0})^{T}.
\end{equation*}
Then \eqref{3-42} can be reduced to a first order ODE system:
\begin{equation}\label{3-43}
\begin{cases}
\begin{aligned}
&\mathbf{X}^{\prime}=\mathbb{A}\mathbf{X}+\mathbf{H},\\
&\Pi_1(\mathbf{X})(r_0)=\bigg(\mathbf{0},\iint_D F_5\vartheta_0(\th)\beta_0(z)\de \th \de z,\cdots,\iint_D F_5\vartheta_{m}(\th)\beta_{m}(z)\de \th \de z,\mathbf{0},\mathbf{0}\bigg)^{T}:=\mathbf{P}_0,\\
 &\Pi_2(\mathbf{X})(r_1)=\mathbf{0},
 \end{aligned}
\end{cases}
\end{equation}
for smooth mappings $\mathbb{A}:(r_0,r_1)\rightarrow \mathbb{R}^{4(m+1)^2\times 4(m+1)^2}$ and $\mathbf{H}:(r_0,r_1)\rightarrow \mathbb{R}^{4(m+1)^2}$ with respect to the coefficients and right terms of \eqref{3-42}.
Therefore, $\mathbf{X}$ can be given by solving the following integral equations:
\begin{equation}\label{X}
\begin{cases}
\begin{aligned}
&\Pi_1(\mathbf{X})(r)-\int_{r_0}^r\Pi_1\mathbb{A}(\mathbf{X})(s)\mathrm{d}s
=\mathbf{P}_0+\int_{r_0}^r\Pi_1\mathbf{H}(s)\mathrm{d}s,\\
&\Pi_2(\mathbf{X})(r)-\int_{r_1}^r\Pi_2\mathbb{A}(\mathbf{X})(s)\mathrm{d}s
=\int_{r_1}^r\Pi_2\mathbf{H}(s)\mathrm{d}s.
\end{aligned}
\end{cases}
\end{equation}
\par We define a linear operator $\mathfrak{E}:C^1([r_0,r_1];\mathbb{R}^{4(m+1)^2})\rightarrow C^1([r_0,r_1];\mathbb{R}^{4(m+1)^2})$ by
\begin{equation*}
\mathfrak{E}\mathbf{X}(r):=\Pi_1\int_{r_0}^r\mathbb{A}(\mathbf{X})(s)\mathrm{d}s
+\Pi_2\int_{r_1}^r\mathbb{A}(\mathbf{X})(s)\mathrm{d}s.
\end{equation*}
 Note that $\mathbb{A}:(r_0,r_1)\rightarrow \mathbb{R}^{4(m+1)^2\times4(m+1)^2}$ is smooth. Then  there exists a constant
$C_0 > 0$ such that
\begin{equation*}
\|\mathfrak{E}\mathbf{X}\|_{C^2([r_0, r_1])}\leq C_0\|\mathbf{X}\|_{C^1([r_0, r_1])}, \ \ \rm{{for \ all}} \  \mathbf{X}\in \mathfrak{E}.
\end{equation*}
Thus the Arzel\'{a}-Ascoli theorem shows that $\mathfrak{E}$ is compact.  Furthermore, one  can rewrite   \eqref{X} as
\begin{equation}\label{R}
(\mathbf{I}-\mathfrak{E})\mathbf{X}(r)=\mathbf{P}_0+\Pi_1\int_{r_0}^r
\mathbf{H}(s)\mathrm{d}s
+\Pi_2\int_{r_1}^r\mathbf{H}(s)\mathrm{d}s.
\end{equation}
Suppose that $(\mathbf{I}-\mathfrak{E})\mathbf{X}_*=0$. Then $ \mathbf{X}_* $ solves \eqref{3-42}   with $(\mathbf{P}_0,\mathbf{H})=(\mathbf{0},\mathbf{0})$ and  the  corresponding $(\psi_m^\eta,\h\Psi_m^\eta)$ solves \eqref{Leqs4} with $\e F^\eta_3=\e F^\eta_4=F_5=0$. The estimate \eqref{H1} implies $\psi_m^\eta=\h\Psi_m^\eta=0$,  from which one gets $\mathbf{X}_*=\mathbf{0}$ on $[r_0,r_1]$. Hence the  Fredholm alternative theorem  yields  that  \eqref{R} has a unique solution $\mathbf{X}\in C^1([r_0,r_1];\mathbb{R}^{4(m+1)})$.
\par Furthermore, the bootstrap argument for \eqref{X} gives the smoothness of the solution $\mathbf{X}$ on $[r_0,r_1]$. Therefore, $\psi_m^\eta  $ and $\h\Psi_m^\eta$ are smooth functions and solve \eqref{Leqs4}. Then
the following higher order derivatives estimate can be derived similarly as in Lemma \ref{pro5}:
\begin{eqnarray}\label{H^4-1-1-a}
&&\Vert\psi_m^\eta\Vert_{H^4(\Omega)}
\leq C\left(\|\e F_3^\eta\|_{H^3(\m)}+\|\e F_4^\eta\|_{H^2(\m)}+\| F_5\|_{H^3(\overline D)}\right),\\\label{H^4-2-2-b}
&&\Vert\h\Psi_m^\eta\Vert_{H^4(\Omega)}
\leq C\left(\|\e F_3^\eta\|_{H^2(\m)}+\|\e F_4^\eta\|_{H^2(\m)}+\|F_5\|_{H^2(\overline D)}\right).
\end{eqnarray}
The above estimate constant $C>0$  is independent of $\eta$ and $m$.
\par  { \bf Step 3. Taking the limit.}
\par For  any fixed $\eta>0$, let  $ (\psi_m^\eta,\h\Psi_m^\eta) $ given in the form \eqref{m} be the solution to the problem \eqref{Leqs4}. It follows from \eqref{H^4-1-1-a} and \eqref{H^4-2-2-b} that the sequence $\{(\psi_m^\eta,\h\Psi_m^\eta)\}_{m=1}^{\infty}$ is bounded in $[H^4(\Omega)]^2$. Owing to $ \vartheta_i^{\prime}(\pm\theta_0)=\vartheta_i^{\prime\prime\prime}(\pm\theta_0)=0 $ and $\beta_j^{\prime}(\pm 1)=\beta_j^{\prime\prime\prime}(\pm 1)=0$ for $i,j=0,1,...$, thus each $(\psi_m^\eta,\h\Psi_m^\eta)$ satisfies the compatibility conditions $\p_{\th}^k\psi_m^\eta=\p_{\th}^k\h\Psi_m^\eta=0$ on $\Gamma_{\th_0}^\pm $ and $\p_{z}^k\psi_m^\eta=\p_{z}^k\h\Psi_m^\eta=0$ on $\Gamma_1^\pm $ for $k=1,3$. By  the weak compactness property of $H^4(\Omega)$,  there exists a subsequence $\{(\psi_{m_k}^\eta,\h\Psi_{m_k}^\eta)\}_{k=1}^{\infty}$ with  $ m_k\rightarrow \infty$ as $ k\rightarrow \infty$ converging weakly to  a limit $(\psi_*^\eta,\h\Psi_*^\eta)$  in $H^4(\Omega)$. Furthermore,  the Morrey inequality implies that the sequence $\{(\psi_m^\eta,\h\Psi_m^\eta)\}_{m\in\mathbb{N}}$ is bounded in $[C^{2,\frac{1}{2}}(\overline{\Omega})]^2$. Then it follows the Arzel$\mathrm{\grave{a}}$-Ascoli theorem that the subsequence $(\psi_{m_k}^\eta,\h\Psi_{m_k}^\eta)$   converges to $(\psi_*^\eta,\h\Psi_*^\eta) $  in $ [C^{2,\frac{1}{4}}(\overline{\Omega})]^2 $.
  This shows that $(\psi_*^\eta,\h\Psi_*^\eta)$ is a classical solution to \eqref{3-41}.
\par Next, it follows from
 \eqref{H^4-1-1-a}-\eqref{H^4-2-2-b} and the $H^4$ convergence of $\{(\psi_{m_k}^\eta,\h\Psi_{m_k}^\eta)\}$ that one obtains
\begin{eqnarray}\label{*H^4_1}
&&\Vert\psi_*^\eta \Vert_{H^4(\Omega)}
\leq C\left(\Vert\e F_3^\eta\Vert_{H^3(\Omega)}+\Vert\e F_4^\eta\Vert_{H^2(\Omega)}
+\Vert F_5\Vert_{H^3(\overline D)}\right),\\
&&\Vert\h\Psi_*^\eta\Vert_{H^4(\Omega)}\label{*H^4_2}
\leq C\left(\Vert\e F_3^\eta\Vert_{H^2(\Omega)}+\Vert\e F_4^\eta\Vert_{H^2(\Omega)}
+\Vert F_5\Vert_{H^2(\overline D)}\right),
\end{eqnarray}
and \begin{equation}\label{*c}
\begin{cases}
\p_{\th}^{k}\psi^\ast=\p_{\th}^k\h\Psi^\ast=0,
\ \ {\rm{on}} \ \Gamma_{\th_0}^\pm,\  \  &{\rm{for}} \ \ k=1,3,\\
\p_{z}^k\psi^\ast=\p_{z}^k\h\Psi^\ast=0,
\ \ {\rm{on}} \ \Gamma_{1}^\pm,\  \  &{\rm{for}} \ \ k=1,3.
\end{cases}
\end{equation}
The estimate  constant $ C>0 $ in \eqref{*H^4_1}-\eqref{*H^4_2} is independent of $\eta$.
\par  For   any fixed $\eta>0$, let $(\psi_\ast^\eta,\h\Psi_\ast^\eta)\in[H^4(\Omega)]^2$ be the solution to  \eqref{3-41}. It follows from  \eqref{*H^4_1}-\eqref{*c} and the Morrey inequality that the sequence $\{(\psi_\ast^\eta,\h\Psi_\ast^\eta)\}_{n\in\mathbb{N}}$ is bounded in $[C^{2,\frac{1}{2}}(\overline{\Omega})]^2$. By adjusting the limiting argument in Step 2, we extract a subsequence $\{(\psi_\ast^{\eta_k},\h\Psi_\ast^{\eta_k})\}_{k=1}^{\infty}$ with $ \eta_k\rightarrow 0$ as $ k\rightarrow \infty$ such that its limit $(\psi_\ast,\h\Psi_\ast)\in[H^4(\Omega)\cap C^{2,\frac{1}{4}}(\overline{\Omega})]^2$ is a classical solution to  \eqref{3-6}  and satisfies
 \eqref{H^4-1}-\eqref{H^4-1-c}.
This completes the proof of Proposition \ref{pro6}.
\subsection{Proof of Theorem 2.4}\noindent
\par We are now ready to prove Theorem \ref{th1}.  Fix $\epsilon_0\in(0,R_0)$ and $ r_1\in (r_0,\bar r_1^\ast) $ for  $\bar r_1^\ast$ from Lemma \ref{pro4}.  Assume that
\begin{equation}\label{3-44}
 \delta^\ast\leq \delta_2^\ast,
 \end{equation}
 where $ \delta_2^\ast $ is given by Lemma \ref{pro5}.
For any fixed  $ (\e\psi,\e\Psi)\in \ma_{\delta^\ast, r_1} $,  one can define a mapping
\begin{equation*}
\mt(\e\psi,\e\Psi)=(\psi,\Psi), \quad{\rm{ for \ each}} \ \ (\e\psi,\e\Psi)\in \ma_{\delta^\ast, r_1}.
\end{equation*}
 Proposition \ref{pro6} shows that there exists a unique solution
$ (\psi,\h\Psi)\in \left(H^4(\m)\right)^2$  to the problem  \eqref{3-6} with the estimate
\begin{equation}\label{3-45}
\begin{aligned}
 \Vert(\psi,\h\Psi)\Vert_{H^4(\Omega)}
\leq C\left(\|\e F_3\|_{H^3(\m)}+\|\e F_4\|_{H^2(\m)}+\|F_5\|_{H^3(\overline D)}\right).
 \end{aligned}
 \end{equation}
   Here the constant  $C>0 $  depends only on $(b_0,   \A_0, U_{0},S_{0},E_{0},\th_0,r_0,r_1,\epsilon_0)$.
\par Use the abbreviation
    \begin{equation*}
    \sigma_{p}=\sigma(b^\ast,U_{ 1,en}^\ast,E_{ en}^\ast,\Phi_{ ex}^\ast),
    \end{equation*}
where $\sigma(b^\ast,U_{ 1,en}^\ast,E_{ en}^\ast,\Phi_{ ex}^\ast)$  is  defined in \eqref{1-9-a}.  Then   \eqref{3-45}, together with  \eqref{3-45-e}, yields
  \begin{equation}\label{3-45-z-e-z}
 \Vert(\psi,\Psi)\Vert_{H^4(\Omega)}
\leq \mc_3^\ast\left((\delta^\ast)^2+\sigma_p\right)
 \end{equation}
 for the constant $\mc_3^\ast>0 $  depending only on $(b_0,   \A_0, U_{0},S_{0},E_{0},\th_0,r_0,r_1,\epsilon_0)$.
 Set
 \begin{equation}\label{3-46}
  \sigma_1^\ast=\frac{1}{16((\mc_3^\ast)^2+\mc_3^\ast)}  \ \ {\rm{and}} \ \ \delta^\ast=4\mc_3^\ast\sigma_p.
\end{equation}
Then if \begin{equation}\label{4-4}
\sigma_p\leq \sigma_1^\ast,
\end{equation}
one can follow from \eqref{3-45-z-e-z} to obtain that
\begin{equation*}
\begin{aligned}
\Vert(\psi,\Psi)\Vert_{H^4(\Omega)}
\leq \mc_3^\ast\bigg((\delta^\ast)^2+\sigma_p\bigg)
\leq \frac12\delta^\ast.
\end{aligned}
\end{equation*}
This implies that the mapping $\mt$ maps $\ma_{\delta^\ast, r_1}$ into itself.
\par It remains to show that the mapping $\mt$ is contractive in a low order norm. Let $ (\e\psi^{(i)},\e\Psi^{(i)})\in \ma_{\delta^\ast, r_1} $, $ i=1,2 $, one has $\mt(\e\psi^{(i)},\e\Psi^{(i)})=(\psi^{(i)},\Psi^{(i)}).$ Set
   \begin{equation*}
  (\e Y_1,\e Y_2) =(\e\psi^{(1)},\e\Psi^{(1)})-(\e\psi^{(2)},\e\Psi^{(2)}), \quad  ( Y_1, Y_2) =(\psi^{(1)},\Psi^{(1)})-(\psi^{(2)},\Psi^{(2)}).
  \end{equation*}
  Then $   ( Y_1, Y_2) $ satisfies
  \begin{equation}\label{3-47}
 \begin{cases}
 A_{11}(r,\th,z,\n\e\psi^{(1)},\e\Psi^{(1)})\p_r^2Y_1
 +2A_{12}(r,\th,z,\n\e\psi^{(1)})\p_{r\th}^2Y_1\\
 +
 2A_{13}(r,\th,z,\n\e\psi^{(1)})\p_{rz}^2Y_1
 +  A_{22}(r,\th,z,\n\e\psi^{(1)},\e\Psi^{(1)})\p_{\th}^2Y_1\\
 +2A_{23}(r,\th,z,\n\e\psi^{(1)})\p_{\th z}^2Y_1
 +A_{33}(r,\th,z,\n\e\psi^{(1)},\e\Psi^{(1)})\p_{z}^2Y_1\\
 +\x a_1(r)\p_rY_1+\x b_1(r)\p_rY_2
 +\x b_2(r)Y_2\\
 =\mf_1(r,\th,z,
 \n\e\psi^{(1)},\n\e\Psi^{(1)},\e\Psi^{(1)},
 \n\e\psi^{(2)},\n\e\Psi^{(2)},\e\Psi^{(2)}), \ \ &{\rm{in}}\ \ \Omega,\\
 \left(\p_r^2+\frac 1 r\p_r+\frac{1}{r^2}\p_{\th}^2+\p_z^2\right)Y_2+\x a_2(r)\p_rY_1-\x b_3(r)Y_2\\
 =\mf_2(r,\th,z,
 \n\e\psi^{(1)},\e\Psi^{(1)},
 \n\e\psi^{(2)},\e\Psi^{(2)})
 , \ \ &{\rm{in}}\ \ \Omega,\\
 Y_1(r_0,\th,z)=0, \ \p_rY_1(r_0,\th,z)= \p_rY_2(r_0,\th,z)=0,\ \ &{\rm{on}}\ \ \Gamma_{en},\\
Y_2(r_1,\th,z)=0, \ \ &{\rm{on}}\ \ \Gamma_{ex},\\
\p_\th Y_1(r,\pm\th_0,z)=\p_\th Y_2(r,\pm\th_0,z)=0, \ \ &{\rm{on}}\ \ \Gamma_{\th_0}^\pm,\\
\p_zY_1(r,\th,\pm 1)=\p_zY_2(r,\th,\pm 1)=0, \ \ &{\rm{on}}\ \ \Gamma_{1}^\pm,\\
 \end{cases}
 \end{equation}
 where
 \begin{equation*}
 \begin{aligned}
 &\mf_1(r,\th,z,
 \n\e\psi^{(1)},\n\e\Psi^{(1)},\e\Psi^{(1)},
 \n\e\psi^{(2)},\n\e\Psi^{(2)},\e\Psi^{(2)})\\
 &=F_1(r,\th,z,
 \n\e\psi^{(1)},\n\e\Psi^{(1)},\e\Psi^{(1)})-F_1(r,\th,z,
 \n\e\psi^{(2)},\n\e\Psi^{(2)},\e\Psi^{(2)})\\
 &\quad - \left(A_{11}(r,\th,z,\n\e\psi^{(1)},\e\Psi^{(1)})
 -A_{11}(r,\th,z,\n\e\psi^{(2)},\e\Psi^{(2)})\right)\p_r^2\psi^{(2)}\\
&\quad -2\left(A_{12}(r,\th,z,\n\e\psi^{(1)})-A_{12}(r,\th,z,\n\e\psi^{(2)})\right)
\p_{r\th}^2\psi^{(2)}\\
 &\quad -
 2\left(A_{13}(r,\th,z,\n\e\psi^{(1)})-A_{13}(r,\th,z,\n\e\psi^{(2)})\right
 )\p_{rz}^2\psi^{(2)}\\
 &\quad- \left(A_{22}(r,\th,z,\n\e\psi^{(1)},\e\Psi^{(1)})
 -A_{22}(r,\th,z,\n\e\psi^{(2)},\e\Psi^{(2)})\right)
 \p_{\th}^2\psi^{(2)}\\
 &\quad-2\left(A_{23}(r,\th,z,\n\e\psi^{(1)})
 -A_{23}(r,\th,z,\n\e\psi^{(2)})\right)\p_{\th z}^2\psi^{(2)}\\
 &\quad-\left(A_{33}(r,\th,z,\n\e\psi^{(1)},\e\Psi^{(1)})
 -A_{33}(r,\th,z,\n\e\psi^{(2)},\e\Psi^{(2)})\right)
 \p_{z}^2\psi^{(2)},\\
 &\mf_2(r,\th,z,
 \n\e\psi^{(1)},\e\Psi^{(1)},
 \n\e\psi^{(2)},\e\Psi^{(2)})\\
 &=F_2(r,\th,z,
 \n\e\psi^{(1)},\e\Psi^{(1)})-F_2(r,\th,z,
 \n\e\psi^{(2)},\e\Psi^{(2)}).\\
 \end{aligned}
 \end{equation*}
 Since $(\e\psi^{(i)},\e\Psi^{(i)})$, $(\psi^{(i)},\Psi^{(i)}) \in \ma_{\delta^\ast, r_1}$, for $i= 1, 2$, the $H^1$ estimate in Lemma \ref{pro4} shows that
 \begin{equation}\label{3-48}
 \begin{aligned}
\|(Y_1,Y_2)\|_{H^1(\m)}
&\leq C\bigg(
 \| \mf_1(\cdot, \n\e\psi^{(1)},
 \n\e\Psi^{(1)},\e\Psi^{(1)},
 \n\e\psi^{(2)},\n\e\Psi^{(2)},\e\Psi^{(2)})\|_{L^2(\m)}\\
 &\qquad+\|\mf_2(\cdot,
 \n\e\psi^{(1)},\e\Psi^{(1)},
 \n\e\psi^{(2)},\e\Psi^{(2)})\|_{L^2(\m)}\bigg)\\
 &\leq \mc_4^\ast\delta^\ast\|(\e Y_1,\e Y_2)\|_{H^1(\m)}
  \leq 4\mc_3^\ast\mc_4^\ast\sigma_p,
 \end{aligned}
\end{equation}
where the constant $\mc_4^\ast>0 $  depends only on $(b_0,   \A_0, U_{0},S_{0},E_{0},\th_0,r_0,r_1,\epsilon_0)$. Set
 \begin{equation}\label{3-49}
\sigma_2^\ast=\frac{1}{8\mc_3^\ast\mc_4^\ast}.
\end{equation}
Then if
$$ \sigma_p\leq \sigma_2^\ast,$$ $\mt$ is a contractive mapping in $H^1(\m)$ norm and there exists a unique $(\psi,\Psi) \in \ma_{\delta^\ast, r_1}$ such that $\mt (\psi,\Psi) = (\psi,\Psi)$.
\par Finally, we choose
\begin{equation*}
\sigma^\ast=\min\left\{\frac{\delta_2^\ast}{4\mc_3^\ast},\sigma_1^\ast,\sigma_2^\ast\right
\}.
\end{equation*}
Then if $ \sigma_p\leq \sigma^\ast$,  the problem \eqref{3-3} has a unique solution $ (\psi,\Psi)\in \left(H^4(\m)\right)^2$  with the estimate
\begin{equation}\label{3-49-1}
\Vert(\psi,\Psi)\Vert_{H^4(\Omega)}\leq 2\mc_3^\ast\sigma_p,
\end{equation}
and the compatibility conditions
\begin{equation}
\label{comp-cond-nlbvp-full-p}
\begin{cases}
\p_{\th}^k\psi=\p_{\th}^k\Psi=0,  \ \ {\rm{on}} \ \Gamma_{\th_0}^\pm,\  \  {\rm{for}} \ \ k=1,3,\\
\p_{z}^k\psi=\p_{z}^k\Psi=0,  \ \ {\rm{on}} \ \Gamma_{1}^\pm,\  \  {\rm{for}} \ \ \ k=1,3.
\end{cases}
\end{equation}
That is, the background supersonic   flow is structurally stable within irrotational flows under three-dimensional perturbations of the boundary
conditions in \eqref{1-c}.
\section{The stability analysis within axisymmetric rotational flows}\label{rotational}\noindent
\par In this section, we first  utilize the deformation-curl-Poisson decomposition  to reformulate the  steady axisymmetric Euler-Poisson system.  Then  we  establish the well-posedness of   the associated linearized hyperbolic-elliptic coupled system. Finally, we  prove  Theorem \ref{th2}.
\subsection{The deformation-curl-Poisson decomposition  }\noindent
\par  The steady axisymmetric  Euler-Poisson system   is hyperbolic-elliptic mixed in the supersonic region.  Thus the
solvability of nonlinear boundary value problems for such mixed system is extremely difficult.   To resolve Problem \ref{probl2},
    one needs
to  effectively decouple the elliptic and hyperbolic modes for further mathematical analysis. Here
  we  utilize the deformation-curl-Poisson decomposition developed in \cite{WX19,WS19}  to deal with the
hyperbolic-elliptic coupled structure in the system \eqref{2-10}. This decomposition
is based on a simple observation that one can rewrite the continuity equation as a Frobenius inner
product of a symmetric matrix and the deformation matrix by employing the Bernoulli's law and
representing the density as a function of the Bernoulli's quantity, the entropy,  the velocity field and the eletrostatic potential.
The vorticity is resolved by an algebraic equation of the Bernoulli¡¯s quantity and the entropy. 
\par Firstly,    the angular velocity,  the Bernoulli's quantity and  the entropy   are transported by the following equations:
\begin{align}\label{5-1}
&\bigg( U_1\p_r + U_3 \p_{z} \bigg)(rU_2)=0, \\\label{5-c}
 &\bigg( U_1\p_r + U_3 \p_{z} \bigg)K=0, \\\label{5-x}
   &\bigg( U_1\p_r + U_3 \p_{z} \bigg)S=0.
 \end{align}
Next, using the Bernoulli's fucntion,  the density $ \rho $  can be represented  as
\begin{equation}\label{5-3}
\rho=\mh(K,S,\Phi,U_1, U_2, U_3)=
\left(\frac{\gamma-1}{\gamma e^S}(K+\Phi-\frac{1}{2}(U_1^2+U_2^2+U_3^2))\right)
^{\frac{1}{\gamma-1}}.
\end{equation}
Substituting \eqref{5-3}   into the first equation in \eqref{2-10} derives that
\begin{equation}\label{5-5-a}
\begin{aligned}
&\bigg(c^2(K,U_1,  U_2,U_3,\Phi)-U_1^2\bigg)\p_r U_1+\bigg(c^2(K,U_1,  U_2,U_3,\Phi)-U_3^2\bigg)\p_z U_3
-{U_1U_3}\bigg(\p_rU_3+\p_z U_1\bigg)\\
&+{\bigg( U_1\p_r + U_3 \p_{z} \bigg)\Phi}
+\frac {c^2(K,U_1,  U_2,U_3,\Phi)U_1}{r}=U_2{\bigg( U_1\p_r + U_3 \p_{z} \bigg)U_2}-{\bigg( U_1\p_r + U_3 \p_{z} \bigg)K} \\
&+\frac{c^2(K,U_1,  U_2,U_3,\Phi)}{\gamma-1}{\bigg( U_1\p_r + U_3 \p_{z} \bigg)S}.\\
\end{aligned}
\end{equation}
Furthermore, one can follow from the fourth equation in \eqref{2-10} to obtain that
\begin{equation}\label{5-4}
{U_1}\bigg(\partial_rU_3-\partial_{z} U_1\bigg)=U_2\p_zU_2+\frac{e^S\mh^{\gamma-1}(K,S,\Phi,U_1, U_2, U_3) }{\gamma-1}{\p_z S}-\p_z K.
\end{equation}

\par Therefore, if a smooth flow does not contain the vacuum and the stagnation points, then the system \eqref{2-10} is equivalent to the following system:
\begin{equation}\label{5-5}
\begin{cases}
\begin{aligned}
&\bigg(c^2(K,U_1,  U_2,U_3,\Phi)-U_1^2\bigg)\p_r U_1+\bigg(c^2(K,U_1,  U_2,U_3,\Phi)-U_3^2\bigg)\p_z U_3
-{U_1U_3}\bigg(\p_rU_3+\p_z U_1\bigg)\\
&+{\bigg( U_1\p_r + U_3 \p_{z} \bigg)\Phi}
+\frac {c^2(K,U_1,  U_2,U_3,\Phi)U_1}{r}=U_2{\bigg( U_1\p_r + U_3 \p_{z} \bigg)U_2}-{\bigg( U_1\p_r + U_3 \p_{z} \bigg)K} \\
&+\frac{c^2(K,U_1,  U_2,U_3,\Phi)}{\gamma-1}{\bigg( U_1\p_r + U_3 \p_{z} \bigg)S},\\
&{U_1}\bigg(\partial_rU_3-\partial_{z} U_1\bigg)=U_2\p_zU_2+\frac{e^S\mh^{\gamma-1}(K,S,\Phi,U_1, U_2, U_3),
 }{\gamma-1}{\p_z S}-\p_z K,\\
 &\bigg(\p_r^2+\frac 1 r\p_r+\p_{z}^2\bigg)\Phi=\mh(K,S,\Phi,U_1, U_2, U_3)-b,\\
&\bigg( U_1\p_r + U_3 \p_{z} \bigg) (rU_2,K,S)=0.
\end{aligned}
\end{cases}
\end{equation}
 Set
\begin{equation*}
\begin{aligned}
&(V_1,V_2,V_3)(r,z)=(U_1,U_3,\Phi)(r,z)-(\bar U,0,\bar \Phi)(r), \ \ \  (r,z)\in \mn, \\
&(W_1,W_2,W_3)(r,z)=(U_2,K,S)(r,z)-(0,K_0,S_0), \ \ \ \   (r,z)\in \mn,\\
&{\bf V}=(V_1,V_2,V_3), \ \ \bar{\bf V}=(\bar U,0,\bar \Phi), \ \ {\bf W}=(W_1,W_2,W_3) , \ \ {\bf W}_0=(0, K_0,S_0).\\
\end{aligned}
\end{equation*}
Then one can use $ {\bf V}$ and $ {\bf W} $ rewrite the system \eqref{5-5}  in $\mn$   as follows
\begin{equation}\label{5-6}
\begin{cases}
\begin{aligned}
&B_{11}(r,z,{\bf V},{\bf W})\p_r V_1+ B_{22}(r,z,{\bf V},{\bf W})\p_z V_2
+B_{12}(r,z,{\bf V})\p_rV_2+B_{21}(r,z,{\bf V})\p_z V_1\\
&+\x a_1(r)V_1+\x b_1(r)\p_r V_3+\x b_2(r)V_3=G_1(r,z,{\bf V},{\bf W})
\\
&\partial_r V_2-\partial_{z} V_1=G_2(r,z,{\bf V},{\bf W}) ,\\
&\bigg(\p_r^2+\frac 1 r\p_r+\p_{z}^2\bigg)V_3+\x a_2(r)V_1-\x b_3(r)V_3=G_3(r,z,{\bf V},{\bf W}),\\
&\bigg( U_1\p_r + U_3 \p_{z} \bigg) (rW_1)=0,\\
&\bigg( U_1\p_r + U_3 \p_{z} \bigg) W_2=0,\\
&\bigg( U_1\p_r + U_3 \p_{z} \bigg) W_3=0,\\
\end{aligned}
\end{cases}
\end{equation}
where
\begin{equation*}
\begin{aligned}
&B_{11}(r,z,{\bf V},{\bf W})={{c^2({\bf V}+\bar {\bf V},W_1,W_2+ K_0)-(\bar U +V_1)^2}}, \\
&B_{22}(r,z,{\bf V},{\bf W})={c^2({\bf V}+\bar {\bf V},W_1,W_2+K_0)- V_2^2}, \\
&B_{12}(r,z,{\bf V})=B_{21}(r,z,{\bf V})=-{(\bar U +V_1) V_2},\\
&\x a_1(r)=\frac{\bar c^2-(\gamma-1)\bar U^2}{r}-{(\gamma+1)(\bar U\bar U')}+{\bar E(r)}, \ \ \x a_2(r)=\frac{\bar \rho\bar U}{\bar c^2},\\
 &\x b_1(r)={\bar U},\ \ \x b_2(r)={(\gamma-1)\bar U'}+\frac{(\gamma-1)\bar U  }{r}
, \ \  \x b_3(r)=\frac{\bar \rho}{\bar c^2},\\
&G_1(r,z,{\bf V},{\bf W})=-\frac{V_1}r(c^2({\bf V}+\bar {\bf V},W_1,W_2+K_0)-\bar c^2)
-\bigg((\gamma-1)(W_2-\frac12W_1^2-\frac12V_2^2)-\frac{\gamma+1}{2}V_1^2\bigg)\bar U'\\
&\quad  -\frac{1}r\bigg((\gamma-1)(W_2-\frac12W_1^2-\frac12V_2^2-\frac{1}{2}V_1^2)\bigg)\bar U -V_1\p_rV_3-V_2\p_z V_3+W_1{\bigg( (\bar U+V_1)\p_r + U_3 \p_{z} \bigg)W_1}
\\
\end{aligned}
\end{equation*}
\begin{equation*}
\begin{aligned}
&\quad -\bigg( (\bar U+V_1)\p_r + U_3 \p_{z} \bigg)W_2+ \frac{c^2({\bf V}+\bar {\bf V},W_1,W_2+K_0)}{\gamma-1}\bigg( (\bar U+V_1)\p_r + U_3 \p_{z} \bigg)W_3,\\
&G_2(r,z,{\bf V},{\bf W})=\frac{1}{V_1+\bar U}\bigg(W_1\p_zW_1+\frac{e^{(W_2+S_0)}\mh^{\gamma-1}({\bf V}+\bar {\bf V},{\bf W}+ {\bf W}_0) }{\gamma-1}{\p_z W_3}-\p_z W_2\bigg),\\
&G_3(r,z,{\bf V},{\bf W})=\mh({\bf V}+\bar {\bf V},{\bf W}+ {\bf W}_0)-\mh(\bar {\bf V}, {\bf W}_0)+\x a_2V_1-\x b_3V_3-(b-b_0).\\
\end{aligned}
\end{equation*}
Furthermore, the system \eqref{5-6} is  supplemented with the following boundary
conditions:
\begin{equation}\label{5-7}
\begin{cases}
(V_1, V_2,\p_r V_3)(r_0,z)=(U_{1,en}^\star,   U_{3,en}^\star,E_{en}^\star)(z)
-(U_0,0,E_0),\ \  &{\rm{on}} \ \ \Sigma_{en},\\
(W_1,W_2,W_3)(r_0,z)=(U_{2,en}^\star,K_{en}^\star,S_{en}^\star)(z)
-(0,K_0,S_0),\ \ &{\rm{on}} \ \ \Sigma_{en},\\
V_3(r_1,z)=\Phi_{ex}^\star(z)-\bar\Phi(r_1), \ \ &{\rm{on}}\ \  \Sigma_{ex},\\
V_2(r,\pm 1)=\p_z V_3(r,\pm 1)=0,  \ \  &{\rm{on}}\ \ \Sigma_{1}^\pm.\\
\end{cases}
\end{equation}
\par Note that $G_2({\bf V},{\bf W})$ only belongs to $H^2(\mn)$ if one looks for the solution $({\bf V},{\bf W})$ in $\left(H^3(\mn)\right)^2\times H^4(\mn)\times \left(H^3(\mn)\right)^3$.   The first two equations in \eqref{5-6} can be regarded as a first order system for $ (V_1, V_2)$,  the energy estimates obtained in the previous section for irrotational flows
indicate that the regularity of the solutions $V_1$ and $ V_2$ would be at best same as the source terms on the right hand sides in general. Hence  it seems that only $H^2(\mn)$ regularity for $(V_1,V_2)$ is possible and there appears a loss of derivatives. To recover the loss of derivatives,   we use the continuity equation to introduce the stream function which has the advantage of one order higher regularity than the  radial velocity and the vertical velocity. The angular velocity, the Bernoulli's quantity and the entropy can be represented as  functions of the stream function.  This
will enable us to overcome the possibility of losing derivatives. To achieve this, we will choose some
appropriate function spaces and design an elaborate two-layer iteration scheme to prove Theorem \ref{th2}.
\subsection{The linearized problem }\noindent
\par  To solve the nonlinear boundary value problem \eqref{5-6} with \eqref{5-7} by the method of iteration, we define the following the iteration sets:
\begin{equation}\label{5-8}
\begin{aligned}
  \mj_{\delta_{1}^\star, r_1}&=\bigg\{{\bf W}=(W_1,W_2,W_3)\in
  \left(H^4(\mn)\right)^3:\,\,\|(W_1,W_2,W_3)\|_{H^4(\mn)}\leq \delta_{1}^\star,\\
   &\quad\ \ \ \p_{z}^k W_1=\p_{z}^k W_2=\p_{z}^k W_3=0, \ \ {\rm{on}} \ \Sigma_{1}^\pm,  \ \  {\rm{for}} \ \ k=1,3\bigg\},\\
   \end{aligned}
  \end{equation}
  and
  \begin{equation}
  \label{5-9}
  \begin{aligned}
\mj_{\delta_{2}^\star, r_1}&=\bigg\{{\bf V}=(V_1,V_2,V_3)\in \left(H^3(\mn)\right)^2\times H^4(\mn):\,\,\|(V_1,V_2)\|_{H^3(\mn)}+\|V_3\|_{H^4(\mn)}\leq \delta_{2}^\star,\\
&\quad\quad \p_{z} V_1=\p_z^{k-1}V_2=\p_{z}^kV_3=0,  \ \ {\rm{on}} \ \Sigma_{1}^\pm,  \ \  {\rm{for}} \ \ k=1,3\bigg\},
\end{aligned}
\end{equation}
where the positive constants $\delta_{1}^\star$, $\delta_{2}^\star$ and $r_1$ will be determined later.
\par Next, for   fixed  $ \h {\bf W}\in \mj_{\delta_{1}^\star, r_1} $ and for any function $\h{\bf V}\in \mj_{\delta_{2}^\star, r_1} $, we first construct an
operator $ \mb_1^{\h {\bf W}}:\h{\bf V}\in \mj_{\delta_{2}^\star, r_1}\mapsto{\bf V}\in \mj_{\delta_{2}^\star, r_1} $  by resolving the following boundary value problem:
\begin{equation}\label{5-10}
\begin{cases}
B_{11}(r,z,\h{\bf V},\h{\bf W})\p_r V_1+ B_{22}(r,z,\h{\bf V},\h{\bf W})\p_z V_2
+B_{12}(r,z,\h{\bf V})\p_rV_2\\
+B_{21}(r,z,\h{\bf V})\p_z V_1
+\x a_1(r)V_1+\x b_1(r)\p_r V_3+\x b_2(r)V_3=G_1(r,z,\h{\bf V},\h{\bf W}), \ \ &{\rm{in}} \ \ \mn,\\
\partial_r V_2-\partial_{z} V_1=G_2(r,z,\h {\bf V},\h {\bf W}), \ \ &{\rm{in}} \ \ \mn,\\
\bigg(\p_r^2+\frac 1 r\p_r+\p_{z}^2\bigg)V_3+\x a_2(r)V_1-\x b_3(r)V_3=G_3(r,z,\h{\bf V},\h{\bf W}), \ \ &{\rm{in}} \ \ \mn,\\
(V_1, V_2,\p_r V_3)(r_0,z)=(U_{1,en}^\star,   U_{3,en}^\star,E_{en}^\star)(z)
-(U_0,0,E_0),\ \  &{\rm{on}} \ \ \Sigma_{en},\\
V_3(r_1,z)=\Phi_{ex}^\star(z)-\bar\Phi(r_1), \ \ &{\rm{on}}\ \  \Sigma_{ex},\\
V_2(r,\pm 1)=\p_z V_3(r,\pm 1)=0,  \ \  &{\rm{on}}\ \ \Sigma_{1}^\pm.\\
\end{cases}
\end{equation}
To  simplicity the notation, we denote
\begin{equation*}
\begin{aligned}
&\h B_{ii}(r,z):=B_{ii}(r,z,\h{\bf V},\h{\bf W}),\ \  (r,z)\in \mn, \  i=1,2,  \\
&\h B_{ij}(r,z):=B_{ij}(r,z,\h{\bf V}),\quad \ \ \   (r,z)\in \mn, \  i\neq j=1,2, \\
 &\h G_i(r,z):=G_i (r,z,\h{\bf V},\h {\bf W}), \ \  \  (r,z)\in \mn, \ i=1,2,3.
 \end{aligned}
\end{equation*}
Then we have the following proposition.
\begin{proposition}\label{pro5-2}
Let  $R_0$ be given by  Proposition \ref{pro1}.
\begin{enumerate}[ \rm (i)]
\item
For any $r_1\in(r_0,r_0+R_0)$, set
\begin{eqnarray*}
\begin{aligned}
\bar B_{11}(r)=\bar c^2-{\bar U^2}, \ \ \bar B_{12}(r)=\bar B_{21}(r)=0, \ \
\bar B_{22}(r)=\bar c^2, \ \   r\in [r_0, r_1].
\end{aligned}
\end{eqnarray*}
 Then there exists a constant $\bar\mu_2\in(0,1)$ depending only on $(b_0, \A_0,U_{0}, S_{0},E_{0})$ such that
\begin{eqnarray}\label{5-7-a}
&&\bar\mu_2\leq-\bar B_{11}(r)\leq \frac{1}{\bar\mu_2},\ \ \bar\mu_2\leq \x B_{22}(r)\leq \frac{1}{\bar\mu_2},
\ \   r\in [r_0, r_1].
\end{eqnarray}
Furthermore, the coefficients $\bar B_{ii}$,   $\x a_i  $ and  $\x b_j $ for $ i=1,2 $ and  $j=1,2,3$  are smooth functions. More precisely, for each $k\in\mathbb{Z}^+$, there exists a constant $\bar C_k>0$ depending only on $(b_0, \A_0,U_{0}, S_{0}, E_{0},r_0,R,k)$ such that
\begin{equation}\label{5-7-b}
\Vert (\bar B_{11},\bar B_{22}, \x a_1,\x a_2, \x b_1, \x b_2,\x b_3)\Vert_{C^k([r_0,r_1])}\leq\bar C_k.
\end{equation}
\item
For each $\epsilon_0\in(0, R_0)$ ,
there exists a positive constant $\delta_3^\star$ depending only on  $(b_0,   \A_0, U_{0},S_{0},E_{0},\epsilon_0)$ such that    for $r_1\in(r_0,r_0+R_0-\epsilon_0]$ and $\max\{\delta_1^\star,\delta_2^\star\}\leq \delta_3^\star$, the coefficients $ (\h B_{11}, \h B_{12},\h B_{22}) $ for $ (\h {\bf W},\h{\bf V})\in \mj_{\delta_{1}^\star, r_1} \times \in \mj_{\delta_{2}^\star, r_1} $ with $\mj_{\delta_{1}^\star, r_1} $ and $  \mj_{\delta_{2}^\star, r_1} $   given by \eqref{5-8} and \eqref{5-9} satisfy the following
properties.
\begin{enumerate}[ \rm (a)]
 \item There exists a positive constant  $C$ depending only on $(b_0, \A_0, U_{0},S_{0},E_{0},\epsilon_0)$ such that
\begin{eqnarray}\label{5-7-f}
 \|(\h B_{11}, \h B_{12},\h B_{22})-(\bar B_{11}, 0,\bar B_{22})\|_{H^3(\mn)}\leq C(\delta_1^\star+\delta_2^\star).
\end{eqnarray}
\item The coefficients $ (\h B_{11}, \h B_{12},\h B_{22}) $ satisfy the following compatibility conditions:
\begin{eqnarray}\label{5-7-f-1}
  \h B_{12}=\p_{z}\h B_{11}=\p_{z}\h B_{22}=0, \ \ {\rm{on}} \ \Sigma_{1}^\pm.
  \end{eqnarray}
 \item There exists a constant  $\mu_2\in (0,1)$  depending only on $(b_0, \A_0, U_{0},S_{0},E_{0},\epsilon_0)$ so that the coefficients $ (\h B_{11},\h B_{22}) $ satisfy
\begin{equation}\label{5-7-e}
{\mu_2}\leq -\h B_{11}(r,z)\leq\frac1{\mu_2}, \
\ {\mu_2}\leq \h B_{22}(r,z)\leq\frac1{\mu_2},  \ \ (r,z)\in\overline{\mn}.
\end{equation}


\end{enumerate}
\end{enumerate}
\end{proposition}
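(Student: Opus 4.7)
The plan for part (i) is to read off the bounds directly from Proposition \ref{pro1}. By construction $\bar B_{11}(r)=\bar c^2(r)-\bar U^2(r)$, and the supersonic separation in \eqref{1-9} together with the continuity of $\bar U^2-\bar c^2$ on the compact interval $[r_0,r_1]$ yields a uniform positive lower bound on $-\bar B_{11}$; an upper bound is automatic because $\bar U,\bar c$ are bounded through the bounds \eqref{1-9} and $\bar\rho=J_0/(r\bar U)$. For $\bar B_{22}=\bar c^2$, positivity and two-sided bounds follow from the positivity and smoothness of $\bar\rho$. The $C^k$ estimate \eqref{5-7-b} is then obtained by bootstrapping the smoothness of the solution $(\bar U,\bar E)$ to the ODE system \eqref{1-5}: since the right-hand sides in \eqref{1-5} are smooth functions of their arguments away from the sonic line, and \eqref{1-9} keeps the trajectory strictly supersonic on $[r_0,r_0+R_0-\epsilon_0]$, iterated differentiation of \eqref{1-5} gives the $C^k$ bound of the coefficients $\bar B_{ii},\bar a_j,\bar b_j$ with constants depending only on the data.

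For part (ii)(a), I would exploit the fact that the domain $\mn$ is two-dimensional, so that $H^3(\mn)$ is a Banach algebra continuously embedded in $C^{1,\alpha}(\overline{\mn})$. Every coefficient $\h B_{ij}$ is the image of $(\h{\bf V},\h{\bf W})$ under a smooth (in fact real-analytic in a neighborhood of the background state, away from vacuum and stagnation) nonlinear map, constructed from $c^2$, the Bernoulli-based density representation \eqref{5-3}, and polynomial expressions. Writing
\begin{equation*}
\h B_{ii}-\bar B_{ii}=\int_0^1\frac{d}{d\tau}B_{ii}(r,z,\tau\h{\bf V},\tau\h{\bf W})\,d\tau,
\end{equation*}
a standard Moser estimate for compositions in $H^3$, together with the algebra property, bounds $\|\h B_{ii}-\bar B_{ii}\|_{H^3(\mn)}$ by $C(\|\h{\bf V}\|_{H^3}+\|\h{\bf W}\|_{H^3})\leq C(\delta_1^\star+\delta_2^\star)$, provided $\delta_3^\star$ is chosen small enough that the arguments remain in a fixed compact neighborhood of the background. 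The same argument applied to $\h B_{12}=-(\bar U+\h V_1)\h V_2$ is even simpler as this term is polynomial.

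The compatibility conditions \eqref{5-7-f-1} are precisely the point where the trace conditions encoded in the iteration sets \eqref{5-8}--\eqref{5-9} are used. On $\Sigma_1^\pm$ one has $\h V_2=0$, $\p_z\h V_1=\p_z\h V_3=0$, and $\p_z\h W_j=0$ for $j=1,2,3$, while $\bar U(r)$ and $\bar\Phi(r)$ are independent of $z$. Since $\h B_{12}=-(\bar U+\h V_1)\h V_2$, the factor $\h V_2$ kills it on $\Sigma_1^\pm$. For $\p_z\h B_{11}$ and $\p_z\h B_{22}$, the chain rule produces only terms of the form (smooth function of arguments) times one of $\p_z\h V_1,\p_z\h V_3,\p_z\h W_j$, or of the form (smooth function) times $\h V_2$ coming from $\p_z(\h V_2^2)=2\h V_2\p_z\h V_2$; all such factors vanish on $\Sigma_1^\pm$, establishing \eqref{5-7-f-1}.

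Finally, the two-sided bound \eqref{5-7-e} is obtained from part (i) by a smallness-of-perturbation argument. Using the embedding $H^3(\mn)\hookrightarrow C^0(\overline{\mn})$ and the estimate \eqref{5-7-f}, we have $\|\h B_{ii}-\bar B_{ii}\|_{C^0(\overline\mn)}\leq C(\delta_1^\star+\delta_2^\star)$; choosing $\delta_3^\star$ so that $C\delta_3^\star\leq \bar\mu_2/4$ and setting $\mu_2=\bar\mu_2/2$ preserves the uniform bounds from \eqref{5-7-a}. The only genuinely delicate point in the proof is the compatibility step: one must verify that all trace factors actually come with either a $\p_z$-derivative of an iteration variable or with $\h V_2$, which follows from a careful bookkeeping of the chain rule and the definition of $c^2$ in terms of the Bernoulli quantity, potential, and velocity components.
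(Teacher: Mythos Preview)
Your proposal is correct and follows the natural route; in fact the paper does not write out a proof of this proposition at all (nor of its three-dimensional analogue, Proposition~\ref{pro2}), treating the claimed properties as direct consequences of the definitions and of Proposition~\ref{pro1}. Your argument supplies exactly the details one would expect: compactness and the strict supersonic separation \eqref{1-9} for part~(i), the Banach-algebra property of $H^3$ in two space dimensions together with a Moser-type composition estimate for \eqref{5-7-f}, a chain-rule bookkeeping using the trace conditions built into $\mj_{\delta_1^\star,r_1}$ and $\mj_{\delta_2^\star,r_1}$ for \eqref{5-7-f-1}, and a Sobolev-embedding perturbation for \eqref{5-7-e}. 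One small remark: in part~(i) your compactness argument on $[r_0,r_1]$ a priori produces a constant $\bar\mu_2$ that could depend on $r_1$, whereas the paper asserts dependence only on the entrance data; to match the stated dependence you should invoke the uniform bounds $U_\sharp<\bar U<U^\sharp$ from \eqref{1-9} (which hold on all of $[r_0,r_0+R_0)$) together with $\bar\rho=J_0/(r\bar U)$ to get two-sided bounds on $\bar c^2$ and $\bar U^2$ that are uniform in $r_1$.
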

\subsection{A second-order linear  hyperbolic-elliptic coupled system}\noindent
\par Firstly, we consider  the following problem:
\begin{equation}\label{5-12}
\begin{cases}
\partial_r^2 \varphi_1+\partial_{z}^2 \varphi_1=\h G_2 ,\ \ &{\rm{in}}\ \ \mn,\\
 \p_r\varphi_1(r_0,z)=0, &{\rm{on}}\ \ \Sigma_{en},\\
 \p_r\varphi_1(r_1,z)=0, &{\rm{on}}\ \ \Sigma_{ex},\\
 \varphi_1(r,\pm 1)=0, &{\rm{on}}\ \ \Sigma_{1}^\pm.\\
 \end{cases}
\end{equation}
A simple computation shows that
\begin{equation}\label{5-7-g}
 \|\h G_2\|_{H^3(\mn)}\leq C\delta_1^\star \ \ {\rm{and}} \ \  \h G_2 =\p_{z}^2\h G_2=0, \  {\rm{on}}\ \ \Sigma_{1}^\pm.
 \end{equation}
Hence one can use the standard symmetric extension
technique to deal with the regularity  near the corner.   We extend $ \varphi_1 $ and $  \h G_2 $ onto $ \mn^e=(r_0,r_1)\times(-3,3)$ by
\begin{equation}\label{2-13}
(\varphi_1^e,\h G_2^e)(r,z)=
\begin{cases}
(\varphi_1,\h G_2)(r,z), \ \ &  (r,z)\in (r_0,r_1)\times[-1,1],\\
-(\varphi_1,\h G_2)(r,-2-z), \ \ &  (r,z)\in (r_0,r_1)\times(-3,-1),\\
-(\varphi_1,\h G_2)(r,2-z), \ \ &  (r,z)\in (r_0,r_1)\times(1,3).\\
\end{cases}
\end{equation}
Then   $ \varphi_1^e $ satisfies
\begin{equation}\label{5-14}
\begin{cases}
\p_r^2\varphi_1^e+\p_z^2\varphi_1^e=\h G_2^e, \ \ &{\rm{in}}\ \ \mn^e,  \\
\p_r\varphi_1^e(r_0,z)=\p_r\varphi_1^e(r_1,z)=0,&  z\in (-3,3),\\
\varphi_1^e(r,-3)=\varphi_1^e(r,3)=0, & r\in (r_0,r_1).
\end{cases}
\end{equation}
Therefore, the standard  elliptic theory in \cite{GT98} shows that the problem \eqref{5-12} has a unique solution $ \varphi_1\in H^{5}(\mn) $ satisfying \begin{equation}\label{5-15}
\|\varphi_1\|_{H^5(\mn)} \leq C\|\h G_2\|_{H^3(\mn)} \ \ {\rm{and}} \ \  \p_{z}^2\varphi_1 =\p_{z}^4\varphi_1 =0,\ \ {\rm{on}} \ \Sigma_{1}^\pm.
 \end{equation}
 Furthermore, for $ \alpha\in (0,1) $, the Sobolev inequality yields that
  \begin{equation}\label{5-15-1}
\|\varphi_1\|_{C^{3,\alpha}(\overline\mn)} \leq C\|\h G_2\|_{H^3(\mn)}.
 \end{equation}
 \par
 Set
 \begin{equation*}
 \begin{aligned}
 &Q_1(r,z)=V_1(r,z)+\p_z \varphi_1(r,z), \ \ Q_2(r,z)=V_2(r,z)-\p_r \varphi_1(r,z)-U_{3,en}^\star(z), \ & (r,z)\in \mn,\\ &\Psi(r,z)=V_3(r,z)-(r-r_1)(E_{en}^\star(z)-E_0)-(\Phi_{ex}^\star(z)-\bar\Phi(r_1)), \ & (r,z)\in \mn.
 \end{aligned}
  \end{equation*}
  Then \eqref{5-10} can be transformed into
  \begin{equation}\label{5-16}
\begin{cases}
\h B_{11}\p_r Q_1+ \h B_{22}\p_z Q_2
+\h B_{12}\p_rQ_2+\h B_{21}\p_z Q_1\\
+\x a_1Q_1+\x b_1\p_r \Psi+\x b_2\Psi=\h G_4, \ \ &{\rm{in}} \ \ \mn,\\
\partial_r Q_2-\partial_{z} Q_1=0, \ \ &{\rm{in}} \ \ \mn,\\
\bigg(\p_r^2+\frac 1 r\p_r+\p_{z}^2\bigg)\Psi+\x a_2Q_1-\x b_3\Psi=\h G_5, \ \ &{\rm{in}} \ \ \mn,\\
Q_1(r_0,z)=\h G_6(z),\ \ Q_2(r_0,z)=\p_r \Psi(r_0,z)=0,\ \  &{\rm{on}} \ \ \Sigma_{en},\\
\Psi(r_1,z)=0, \ \ &{\rm{on}}\ \  \Sigma_{ex},\\
Q_2(r,\pm 1)=\p_z \Psi(r,\pm 1)=0,  \ \  &{\rm{on}}\  \ \Sigma_{1}^\pm,\\
\end{cases}
\end{equation}
where
\begin{equation*}
\begin{aligned}
\h G_4(r,z)&=\h G_1-\bigg((-\h B_{11}\p_{rz}^2 \varphi_1+ \h B_{22})\p_{rz}^2 \varphi_1+ \h B_{22}(U_{3,en}^\star)' +\h B_{12}(\p_r^2\varphi_1-\p_z^2 \varphi_1)
-\x a_1\p_z\varphi_1+\x b_1 (E_{en}^\star-E_0)\\
&\qquad\quad \ \ +\x b_2((r-r_1)(E_{en}^\star-E_0)+(\Phi_{ex}^\star-\bar\Phi(r_1)))\bigg), \ \ (r,z)\in \mn,\\
\h G_5(r,z)&=\h G_3-\bigg(\frac1r (E_{en}^\star-E_0 ) +(r-r_1)(E_{en}^\star)''+(\Phi_{ex}^\star)''-\x a_2\p_z \varphi_1\\
&\qquad\quad\ \ -\x b_3((r-r_1)(E_{en}^\star-E_0)+(\Phi_{ex}^\star-\bar\Phi(r_1)))\bigg),\ \ (r,z)\in \mn,\\
 G_6(z)&=U_{1,en}^\star(z)-U_0+\p_z \varphi_1(r_0,z), \ \ z\in[-1,1].
\end{aligned}
\end{equation*}
It  follows from the expressions of  $ \h G_1 $ and $ \h G_3 $ together with direct computations that one obtains
    \begin{eqnarray}\label{5-7-h}
\|\h G_1\|_{H^3(\mn)}+ \|\h G_3\|_{H^2(\mn)}\leq C\left(\delta_1^\star+(\delta_2^\star)^2+\sigma(b^\star,U_{ 1,en}^\star,U_{2, en}^\star,U_{3, en}^\star,K_{en}^\star,S_{en}^\star,E_{ en}^\star,\Phi_{ ex}^\star)\right),
 \end{eqnarray}
 and the  compatibility conditions
 \begin{eqnarray}\label{5-7-j}
\p_{z}\h G_1=\p_{z}\h G_3= 0,\ \ {\rm{on}} \ \Sigma_{1}^\pm.
 \end{eqnarray}
Then one can use  \eqref{5-15}-\eqref{5-15-1} and \eqref{5-7-h}-\eqref{5-7-j} to  derive that
\begin{equation}\label{5-17}
\begin{aligned}
&\|\h G_4\|_{H^3(\mn)}+\|\h G_5\|_{H^2(\mn)}+\| G_6\|_{H^3([-1,1])}\\
&\leq C\left(\|\h G_1\|_{H^3(\mn)}+ \|\h G_3\|_{H^2(\mn)}+\sigma(b^\star,U_{ 1,en}^\star,U_{2, en}^\star,U_{3, en}^\star,K_{en}^\star,S_{en}^\star,E_{ en}^\star,\Phi_{ ex}^\star)\right),
\end{aligned}
\end{equation}
and
\begin{equation}\label{5-17-c}
 \p_z\h G_4=\p_z\h G_5=0,\ \ {\rm{on}} \ \Sigma_{1}^\pm, \ \  G_6'(\pm 1)=0,
\end{equation}
where the constant $C>0$ depends only on $(b_0, \A_0, U_{0},S_{0},E_{0})$.
\par  The second equation in \eqref{5-16} implies that there exists a potential function $ \psi(r,z)$ such that
$$ \p_z\psi=      Q_2, \quad \p_r\psi=Q_1, \ \ \psi(r_0,-1)=0, \ \ {\rm{in}}\ \ \mn.$$
Then $ \psi$ and $   \Psi $ satisfy the following problem:
\begin{equation}\label{5-18}
\begin{cases}
\h L_1(\psi,\Psi)=\h B_{11}\p_r^2\psi+ \h B_{22}\p_z^2\psi
+2\h B_{12}\p_{rz}^2\psi+\x a_1\p_r\psi\\
\qquad\qquad+\x b_1\p_r\Psi+\x b_2\Psi=\h G_4, \ \ &{\rm{in}} \ \ \mn,\\
\h L_2(\psi,\Psi)=
\bigg(\p_r^2+\frac 1 r\p_r+\p_{z}^2\bigg) \Psi+\x a_2\p_r\psi-\x b_3\Psi=\h G_5, \ \ &{\rm{in}} \ \ \mn,\\
\p_r\psi(r_0,z)= G_6(z),\ \psi(r_0,z)=\p_r \Psi(r_0,z)=0,\ \  &{\rm{on}} \ \ \Sigma_{en},\\
\Psi(r_1,z)=0, \ \ &{\rm{on}}\ \  \Sigma_{ex},\\
\p_z\psi(r,\pm 1)=\p_z \Psi(r,\pm 1)=0,  \ \  &{\rm{on}}\ \ \Sigma_{1}^\pm.\\
\end{cases}
\end{equation}
\par Next, by the similar arguments in Lemmas \ref{pro4} and \ref{pro5}, we can obtain  the energy estimates for the problem   \eqref{5-18} under the assumptions that $(\h B_{11}, \h B_{12},\h B_{22})\in \left(C^{\infty}(\overline{\mn})\right)^3$ and   \eqref{5-7-f}-\eqref{5-7-e}  hold.
\begin{lemma}\label{pro5-3}
(A priori $H^1$-estimate)
 Let   $R_0 $ and  $ \delta_3^\star$  be from  Proposition \ref{pro1} and  Proposition \ref{pro5-2}, respectively.
  For each $\epsilon_0\in(0,R_0)$, there exists a constant $\bar r_1^\star\in(r_0,r_0+R_0-\epsilon_0]$ depending only $(b_0,   \A_0, U_{0},S_{0},E_{0},\epsilon_0)$ and a sufficiently small constant  $ \delta_4^\star\in(0,\frac{\delta_3^\star}2]$
such that for  $r_1\in(r_0,\bar r_1^\star)$ and $\max\{\delta_1^\star,\delta_2^\star\}\leq \delta_4^\star$, the classical  solution $ (\psi,\Psi) $ to  \eqref{5-18} satisfies the energy estimate
\begin{equation}\label{5-19}
\|(\psi,\Psi)\|_{H^1(\mn)}\leq C\left(\|\h G_4\|_{L^2(\mn)}+\|\h G_5\|_{L^2(\mn)}+\| G_6\|_{L^2([-1,1])}\right)
\end{equation}
for the constant $C>0$ depending only on $(b_0,   \A_0, U_{0},S_{0},E_{0},r_0,r_1,\epsilon_0)$.
\end{lemma}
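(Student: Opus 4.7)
The strategy mirrors the multiplier argument of Lemma \ref{pro4}, now adapted to the two-dimensional axisymmetric setting \eqref{5-18}. Concretely, we choose a smooth positive multiplier $Z(r)$ on $[r_0,r_1]$ (to be determined by an ODE below) and form the two energy identities
\begin{equation*}
I_1(\psi,\Psi,Z)=\iint_{\mn}Z\,\h L_1(\psi,\Psi)\,\p_r\psi\,\de r\de z,\qquad
I_2(\psi,\Psi)=\iint_{\mn}\h L_2(\psi,\Psi)\,\Psi\,\de r\de z.
\end{equation*}
Integrating by parts and exploiting the boundary conditions in \eqref{5-18} (namely $\psi(r_0,\cdot)=0$, $\p_r\Psi(r_0,\cdot)=0$, $\Psi(r_1,\cdot)=0$, together with $\p_z\psi=\p_z\Psi=0$ on $\Sigma_1^\pm$) converts the second-order bulk terms into quadratic forms in $(\p_r\psi,\p_z\psi,\Psi,\p_r\Psi,\p_z\Psi)$ plus boundary integrals on $\Sigma_{en}$, $\Sigma_{ex}$, and $\Sigma_1^\pm$. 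The cross terms $\bar b_1 Z\,\p_r\Psi\,\p_r\psi$, $\bar b_2 Z\,\Psi\,\p_r\psi$ and $\bar a_2\,\p_r\psi\,\Psi$ are handled by Cauchy's inequality after noting, as in \eqref{2-19-2}, that $\bar b_3-\tfrac{1}{2r^2}\geq \xi_1>0$ uniformly on $[r_0,r_0+R_0-\epsilon_0]$.

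Next, by \eqref{5-7-f} the coefficients $\h B_{ij}$ differ from their background counterparts $\bar B_{ij}$ in $H^3$-norm (hence in $C^0$-norm) by at most $C(\delta_1^\star+\delta_2^\star)$. So one can first carry out the analysis with the background coefficients $\bar B_{11}(r),\bar B_{22}(r)$ and then absorb the perturbation by choosing $\max\{\delta_1^\star,\delta_2^\star\}\leq \delta_4^\star$ small. The positivity of the resulting bulk quadratic form reduces to finding $Z>0$ on $[r_0,r_1]$ satisfying
\begin{enumerate}[\rm (i)]
\item $\tfrac12(Z\bar B_{11})'-\bar a_1 Z - f(Z,r)\geq \lambda_0$,
\item $-\tfrac12(Z\bar B_{22})'\geq \lambda_0$,
\end{enumerate}
for some $\lambda_0>0$ and some $f(Z,r)$ of the form $2\bigl(\bar b_1^2+\bar b_2^2/\xi_1\bigr)Z^2+2\bar a_2^2/\xi_1$ exactly as in \eqref{ODE}. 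Defining $\mathfrak a_0,\mathfrak a_1,\mathfrak a_2$ from $\bar B_{ii},\bar a_i,\bar b_j$ as in Lemma \ref{pro4}, the existence of such $Z$ on a suitably short interval $[r_0,\bar r_1^\star]\subset[r_0,r_0+R_0-\epsilon_0]$ follows by solving the same Riccati-type ODE $-Z'-\mathfrak a_2 Z^2+2\mathfrak a_1 Z-\mathfrak a_0=\lambda_0$; this is the point at which the smallness of $r_1-r_0$ enters. The boundary term on $\Sigma_{en}$ involving $(\p_r\psi)^2(r_0,z)=G_6^2(z)$ is absorbed via Cauchy into the $L^2([-1,1])$-norm of $G_6$, while the boundary term $\int\tfrac{\Psi^2}{2r}(r_0,z)\de z$ arising from $I_2$ is nonnegative and discarded. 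Combining the two identities then yields
\begin{equation*}
\iint_{\mn}\bigl(|\n\psi|^2+\Psi^2+|\n\Psi|^2\bigr)\de r\de z
\leq C\bigl(\|\h G_4\|_{L^2}^2+\|\h G_5\|_{L^2}^2+\|G_6\|_{L^2}^2\bigr),
\end{equation*}
and a final application of the Poincar\'e inequality, using $\psi(r_0,\cdot)=0$ and $\Psi(r_1,\cdot)=0$, upgrades this to the claimed estimate \eqref{5-19}.

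The main obstacle is the same as in Lemma \ref{pro4}: constructing the positive multiplier $Z$ so that the bulk quadratic form dominates all derivative terms simultaneously, which forces a smallness constraint on $r_1-r_0$ and a delicate bookkeeping of the lower-order couplings $\bar a_1,\bar a_2,\bar b_1,\bar b_2,\bar b_3$ against the principal symbol. Once $Z$ is fixed, everything else is a standard Cauchy--Schwarz and perturbation argument; the 2D geometry (absence of a $\theta$-variable and of the lateral faces $\Gamma_{\theta_0}^\pm$) makes the boundary accounting strictly simpler than in the three-dimensional case.
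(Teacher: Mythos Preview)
Your proposal is correct and follows essentially the same route the paper takes: the paper explicitly states that Lemma \ref{pro5-3} is obtained ``by the similar arguments in Lemmas \ref{pro4} and \ref{pro5}'', i.e., via the multiplier $Z(r)\p_r\psi$ and $\Psi$ energy identities, the same Riccati-type ODE for $Z$, the perturbation from $\bar B_{ij}$ to $\h B_{ij}$ under small $\delta_1^\star+\delta_2^\star$, and a closing Poincar\'e inequality. The only difference from the three-dimensional case is the one you already noted---the absence of the $\theta$-variable drops one of the sign conditions on $Z$---so your sketch matches the paper's intended proof.
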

\begin{lemma}\label{pro5-4}
(A priori $H^4$-estimate)
  For fixed $\epsilon_0\in(0,R_0)$, let  $\bar r_1^\star\in(r_0,r_0+R_0-\epsilon_0]$ and $ \delta_4^\star\in(0, \frac{\delta_3^\star}2] $ be from  Lemma \ref{pro5-3}. There exists a positive constant $C$ depending only on $(b_0,   \A_0, U_{0},S_{0},E_{0},r_0,r_1,\epsilon_0)$ and  a  sufficiently small constant  $ \delta_5^\star\in(0,{\delta_4^\star}]$
such that for  $ r_1\in (r_0,\bar r_1^\star) $ and $\max\{\delta_1^\star,\delta_2^\star\}\leq \delta_5^\star$, the classical   solution $ (\psi,\Psi) $ to  \eqref{5-18} satisfies
\begin{eqnarray}\label{5-H^4-1-1-1}
&&\Vert\psi\Vert_{H^4(\mn)}
\leq C\left(\|\h G_4\|_{H^3(\mn)}+\|\h G_5\|_{H^2(\mn)}+\| G_6\|_{H^3([-1,1])}\right),\\\label{5-H^4-2-2}
&&\Vert\Psi\Vert_{H^4(\mn)}
\leq C\left(\|\h G_4\|_{H^2(\mn)}+\|\h G_5\|_{H^2(\mn)}+\| G_6\|_{H^2([-1,1])}\right).
\end{eqnarray}

 \end{lemma}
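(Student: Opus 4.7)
The plan is to follow the three-step bootstrap scheme established in Lemma \ref{pro5} for the three-dimensional irrotational case, exploiting the fact that the axisymmetric domain $\mn$ has one fewer spatial direction and a simpler reflection geometry. First I would derive the $H^2$ estimate. For the elliptic part, I treat \eqref{5-18}$_2$ as a standard Poisson-type equation for $\Psi$ with $-\bar a_2\p_r\psi+\h G_5$ as source, Neumann data on $\Sigma_{en},\Sigma_1^\pm$ and Dirichlet data on $\Sigma_{ex}$, then apply \cite[Theorems 8.8, 8.12]{GT98} after extending across $\Sigma_1^\pm$ by even reflection; combined with Lemma \ref{pro5-3}, this yields $\|\Psi\|_{H^2(\mn)}\leq C(\|\h G_4\|_{L^2}+\|\h G_5\|_{L^2}+\|G_6\|_{L^2})$. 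For $\psi$, I set $h_1=\p_r\psi$, differentiate the hyperbolic equation in $r$, multiply by $\p_r h_1$, and integrate by parts over $\mn_t=(r_0,t)\times(-1,1)$. The quadratic form controlled by $-\h B_{11}(\p_r h_1)^2+\h B_{22}(\p_z h_1)^2$ is coercive thanks to \eqref{5-7-e}, and the boundary contributions at $r=r_0$ are handled via the trace of $G_6$ and the hyperbolic equation restricted to $\Sigma_{en}$ (using $\psi(r_0,\cdot)=\p_r\Psi(r_0,\cdot)=0$). The missing $\|\p_z^2\psi\|_{L^2}$ term is recovered by treating \eqref{5-18}$_1$ as a one-dimensional elliptic equation in $z$ at each fixed $r$, which is genuinely elliptic because $\h B_{22}>0$; smallness of $\delta_i^\star$ absorbs the perturbative cross terms. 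Gr\"onwall's inequality then closes the $H^2$ estimate \eqref{H2-estimate-vm-final}.

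For the $H^3$ estimate, I would extend $-\bar a_2\p_r\psi+\h G_5$ onto $\mn^e=(r_0,r_1)\times(-3,3)$ by even reflection across $z=\pm1$; this extension lies in $H^1(\mn^e)$ without any extra compatibility condition because the original function is $L^2$ and the reflection is even. Applying \cite[Theorems 8.8, 8.12]{GT98} on the extended domain and combining with the $H^2$ bound yields $\|\Psi\|_{H^3}\leq C(\|\h G_4\|_{H^1}+\|\h G_5\|_{H^1}+\|G_6\|_{H^1})$. For $\psi$, at each fixed $r$ the one-dimensional elliptic problem \eqref{3-31} (adapted to $\mn$, so with only the $z$-variable remaining) has source $\mathfrak{f}_3$ whose even reflection across $z=\pm1$ is again $H^1$; integrating the resulting $H^3$ estimate in $z$ against $r$ and combining with the differentiated energy identity for $\p_r h_1$ gives the $H^3$ bound on $\psi$.

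For the $H^4$ estimate the key observation is the compatibility \eqref{5-17-c}, namely $\p_z\h G_4=\p_z\h G_5=0$ on $\Sigma_1^\pm$ and $G_6'(\pm1)=0$, together with the induced compatibility $\p_z\psi=\p_z\Psi=0$ on $\Sigma_1^\pm$ proved inductively from the lower-order estimates. These allow the even reflections of $-\bar a_2\p_r\psi+\h G_5$ and of $\mathfrak{f}_3$ to be $H^2$ across $\Sigma_1^\pm$, so that one more application of \cite[Theorems 8.8, 8.12]{GT98} combined with the cross-section elliptic argument promotes both estimates to $H^4$, yielding \eqref{5-H^4-1-1-1} and \eqref{5-H^4-2-2}.

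The main obstacle, as in Lemma \ref{pro5}, is the interplay between reflection-based regularity and the boundary compatibility conditions: at each stage one must verify that the even reflection of the right-hand side (which involves $\p_r\psi$ or $\p_z^2\psi$) lies in the correct Sobolev space, and this in turn forces the inductively obtained compatibility conditions $\p_z^k\psi=\p_z^k\Psi=0$ on $\Sigma_1^\pm$ for $k=1,3$. Controlling the cross term $2\h B_{12}\p_{rz}^2\psi$ in the hyperbolic equation during the multiplier argument also requires the smallness constant $\delta_5^\star$ to absorb it into the coercive part, exactly as in the passage from \eqref{3-33} to \eqref{3-34}.
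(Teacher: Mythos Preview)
Your proposal is correct and follows essentially the same approach as the paper: the paper gives no explicit proof of this lemma, stating only that ``by the similar arguments in Lemmas \ref{pro4} and \ref{pro5}'' the estimates follow, and your three-step bootstrap (elliptic regularity for $\Psi$ via reflection, energy identity for $h_1=\p_r\psi$ combined with the cross-section equation $\bar B_{22}\p_z^2\psi=\mathfrak{f}_3$ in $z$, and iteration using the compatibility conditions \eqref{5-17-c}) is exactly the axisymmetric reduction of that argument. The only simplification you correctly exploit is that the cross-section elliptic problem \eqref{3-31} collapses to a one-dimensional ODE in $z$, which makes the reflection geometry and the absorption of the $\h B_{12}$ term cleaner than in the three-dimensional case.
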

  \subsection{The well-posedness of the linearized  problem}\noindent
  \par With the aid of Lemmas \ref{pro5-3} and \ref{pro5-4}, we have the following well-posedness of the linearized  problem \eqref{5-18}.
  \begin{proposition}\label{pro5-5}

  Fix  $\epsilon_0\in(0,R_0)$,  let    $\bar r_1^\star\in(r_0,r_0+R_0-\epsilon_0]$ and $ \delta_5^\star\in(0, {\delta_4^\star}] $ be from  Lemma \ref{pro5-3} and  Lemma \ref{pro5-4}, respectively.
  Then there  exists a  positive constant $C$ depending only on $(b_0, \A_0,U_{0}, S_{0},E_{0},r_0,r_1,\epsilon_0)$  such that for $r_1\in(r_0,\bar r_1^\star)$ and $\max\{\delta_1^\star,\delta_2^\star\}\leq \delta_5^\star$, the  linear boundary value problem \eqref{5-18} associated with $ (\h {\bf W},\h{\bf V})\in \mj_{\delta_{1}^\star, r_1} \times \in \mj_{\delta_{2}^\star, r_1} $  has a unique solution  $ (\psi,\Psi)\in \left(H^4(\mn)\right)^2$ satisfying
  \begin{eqnarray}\label{5-5-H^4-1-1-1}
&&\Vert\psi\Vert_{H^4(\mn)}
\leq C\left(\|\h G_4\|_{H^3(\mn)}+\|\h G_5\|_{H^2(\mn)}+\| G_6 \|_{H^3([-1,1])}\right),\\\label{5-5-H^4-2-2}
&&\Vert\Psi\Vert_{H^4(\mn)}
\leq C\left(\|\h G_4\|_{H^2(\mn)}+\|\h G_5\|_{H^2(\mn)}+\| G_6\|_{H^2([-1,1])}\right).
\end{eqnarray}
In addition, the solution $(\psi,{\Psi})$ satisfies the compatibility conditions
\begin{equation}\label{5-5-H^4-1-1-c}
\p_{z}^k\psi=\p_{z}^k\Psi=0, \ \ {\rm{on}} \ \Sigma_{1}^\pm, \ \  {\rm{for}} \ \ k=1,3.
\end{equation}
\end{proposition}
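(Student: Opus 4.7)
The plan is to mimic the three-step strategy used for Proposition \ref{pro6}, adapted to the axisymmetric setting where the Fourier expansion is in $z$ alone. The a priori estimates \eqref{5-5-H^4-1-1-1}--\eqref{5-5-H^4-2-2} have already been obtained in Lemma \ref{pro5-4}, so the remaining work is the construction of a solution together with verification of the compatibility conditions \eqref{5-5-H^4-1-1-c}; uniqueness then follows from linearity and the $H^1$ estimate \eqref{5-19} in Lemma \ref{pro5-3}.

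First I would regularize the coefficients $(\h B_{11},\h B_{12},\h B_{22})$ so that Lemma \ref{pro5-4}, stated for smooth coefficients, can be applied. Using the compatibility property \eqref{5-7-f-1}, extend $\h B_{11},\h B_{22}$ by even reflection and $\h B_{12}$ by odd reflection across $\Sigma_1^\pm$ into a strip $\mn^e=(r_0,r_1)\times(-3,3)$; the resulting functions remain in $H^3$ across the reflected interface. Mollifying with a radially symmetric kernel $\chi_\eta$ yields $(\h B_{11}^\eta,\h B_{12}^\eta,\h B_{22}^\eta)\in (C^\infty(\overline\mn))^3$ which preserve \eqref{5-7-f-1}, satisfy \eqref{5-7-e} for small enough $\eta$ and $\max\{\delta_1^\star,\delta_2^\star\}$, and converge to $(\h B_{11},\h B_{12},\h B_{22})$ in $H^3(\mn)$ as $\eta\to 0$. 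The right-hand sides $\h G_4,\h G_5$ can be extended analogously thanks to \eqref{5-17-c}.

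Next I would carry out a Galerkin approximation with respect to the basis $\{\beta_j(z)\}_{j=0}^\infty$ defined in \eqref{or-2}, writing
\begin{equation*}
\psi_m^\eta(r,z)=\sum_{j=0}^m \my_j^\eta(r)\beta_j(z),\qquad \Psi_m^\eta(r,z)=\sum_{j=0}^m \ml_j^\eta(r)\beta_j(z),
\end{equation*}
and projecting the equations in \eqref{5-18} against each $\beta_{k}(z)$ to obtain an $(m+1)\times(m+1)$ linear second-order ODE system in $r$ with mixed boundary data at $r_0$ and $r_1$. As in Step 2 of the proof of Proposition \ref{pro6}, one recasts this as a first-order system, converts it to an integral equation of the form $(\mathbf I-\mathfrak E)\mathbf X=\mathbf F$ with $\mathfrak E$ compact, and then invokes the Fredholm alternative; injectivity of $\mathbf I-\mathfrak E$ follows by applying the multiplier/$H^1$ argument behind Lemma \ref{pro5-3} (with the multiplier $Z(r)$ from that proof) to any solution of the homogeneous problem, forcing it to vanish. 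A bootstrap then gives smoothness of the ODE coefficients, hence of $(\psi_m^\eta,\Psi_m^\eta)$, and Lemma \ref{pro5-4} supplies uniform $H^4$ bounds
\begin{equation*}
\Vert\psi_m^\eta\Vert_{H^4(\mn)}+\Vert\Psi_m^\eta\Vert_{H^4(\mn)}\leq C\left(\|\h G_4^\eta\|_{H^3(\mn)}+\|\h G_5^\eta\|_{H^2(\mn)}+\|G_6\|_{H^3([-1,1])}\right)
\end{equation*}
independent of $m$ and $\eta$.

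Finally, weak compactness in $H^4(\mn)$ and the Arzelà--Ascoli theorem in $C^{2,1/4}(\overline\mn)$ yield a limit $(\psi_*^\eta,\Psi_*^\eta)$ as $m\to\infty$ that classically solves the mollified problem; since each $\beta_j$ satisfies $\beta_j'(\pm 1)=\beta_j'''(\pm 1)=0$, the compatibility conditions \eqref{5-5-H^4-1-1-c} are preserved in the limit. Sending $\eta\to 0$ by the same compactness argument gives the desired solution $(\psi,\Psi)$. The main subtlety is verifying the compatibility conditions \eqref{5-5-H^4-1-1-c} persist throughout both limits, and ensuring that the injectivity step in the Fredholm argument is legitimate at the non-smooth ODE coefficient level; both are handled by the mollification stage, which is why it must precede the Galerkin construction rather than follow it.
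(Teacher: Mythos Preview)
Your proposal is correct and follows essentially the same approach as the paper: regularize the coefficients and right-hand sides via even/odd reflection and mollification (preserving the boundary compatibility), run a Galerkin scheme in the cosine basis $\{\beta_j\}$, solve the resulting ODE system by the Fredholm alternative with injectivity coming from the $H^1$ estimate, and then pass to the limits $m\to\infty$, $\eta\to 0$ using the uniform $H^4$ bounds. Your remark that mollification must precede the Galerkin step so that Lemma \ref{pro5-4} applies to smooth coefficients is exactly the point of the paper's Step~1.
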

\begin{proof}
\par It follows from \eqref{5-7-f}-\eqref{5-7-f-1} and \eqref{5-17}-\eqref{5-17-c} that  $(\h B_{11},\h B_{12},\h B_{22},\h G_4,\h G_5)\in \left(H^3(\mn)\right)^4\times H^2(\mn)$ satisfying
\begin{equation*}
\h B_{12} =\p_{z}\h B_{11}=\p_{z}\h B_{22}=\p_{z}\h G_4=\p_{z}\h G_5=0, \ \ {\rm{on}} \ \Sigma_{1}^\pm.
\end{equation*}
Then one can extend $ (\h B_{11},\h B_{12},\h B_{22},\h G_4,\h G_5) $ onto $ \mn^e=(r_0,r_1)\times(-3,3)$ by
\begin{equation*}
(\h B_{11}^e,\h B_{22}^e,\h G_4^e,\h G_5^e)(r,z)=
\begin{cases}
(\h B_{11},\h B_{22},\h G_4,\h G_5)(r,z), \ \ &  (r,z)\in (r_0,r_1)\times[-1,1],\\
(\h B_{11},\h B_{22},\h G_4,\h G_5)(r,-2-z), \ \ &  (r,z)\in (r_0,r_1)\times(-3,-1),\\
(\h B_{11},\h B_{22},\h G_4,\h G_5)(r,2-z), \ \ &  (r,z)\in (r_0,r_1)\times(1,3),\\
\end{cases}
\end{equation*}
and
\begin{equation*}
\h B_{12}^e(r,z)=
\begin{cases}
\h B_{12}(r,z), \ \ &  (r,z)\in (r_0,r_1)\times [-1,1],\\
-\h B_{12}(r,-2-z), \ \ &  (r,z)\in (r_0,r_1)\times(-3,-1),\\
-\h B_{12}(r,2-z), \ \ &  (r,z)\in (r_0,r_1)\times (1,3).\\
\end{cases}
\end{equation*}
  It is easy to check that $(\h B_{11}^e,\h B_{12}^e,\h B_{22}^e,\h G_4^e,\h G_5^e)\in \left(H^3(\mn^e)\right)^4\times H^2(\mn^e)$.   Let $\chi_\eta $ be a  radially symmetric  standard mollifier  with a compact support in a disk of radius $\eta>0$. For $  (r,z)\in \mn^e $,  define
$$\h B_{11}^\eta:=\h B_{11}^e*\chi_\eta,\ \ \h B_{i2}^\eta:=\h B_{i2}*\chi_\eta, \ \ i=1,2, \ \ \h G_j^\eta:= \h G_j^e*\chi_\eta,\ \ j=4,5.$$
Then we have  $(\h B_{11}^\eta,\h B_{12}^\eta,\h B_{22}^\eta,\h G_4^\eta,\h G_5^\eta )\in \left(C^{\infty}(\overline \mn)\right)^5 $
 and
\begin{equation*}
\h B_{12}^{\eta} =\p_{z}\h B_{11}^\eta=\p_{z}\h B_{22}^\eta=\p_{z}\h G_4^\eta=\p_{z}\h G_5^\eta=0, \ \ {\rm{on}} \ \Sigma_{1}^\pm.
\end{equation*}
 Furthermore, as $ \eta\rightarrow0 $, one derives
$$ \|\h B_{11}^{\eta}-\h B_{11}\|_{H^3(\mn)}\rightarrow0, \ \ \|\h B_{i2}^{\eta}-\h B_{i2}\|_{H^3(\mn)}\rightarrow0,\ i=1,2,\ \  \|\h G_4^{\eta}-\h G_4\|_{H^3(\mn)}\rightarrow0, \  \|\h G_5^{\eta}-\h G_5\|_{H^2(\mn)}\rightarrow0. $$
Therefore, a  approximation problem with smooth coefficients of \eqref{5-18} is obtained as follows:
\begin{equation}\label{5-20}
\begin{cases}
\h L_1^\eta(\psi,\Psi)=\h B_{11}^\eta\p_r^2\psi+ \h B_{22}^\eta\p_z^2\psi
+2\h B_{12}^\eta\p_{rz}^2\psi++\x a_1\p_r\psi\\
\qquad\qquad+\x b_1\p_r\Psi+\x b_2\Psi=\h G_4^\eta, \ \ &{\rm{in}} \ \ \mn,\\
\h L_2(\psi,\Psi)=
\bigg(\p_r^2+\frac 1 r\p_r+\p_{z}^2\bigg) \Psi+\x a_2\p_r\psi-\x b_3\Psi=\h G_5^\eta, \ \ &{\rm{in}} \ \ \mn,\\
\p_r\psi(r_0,z)= G_6(z),\ \psi(r_0,z)=\p_r \Psi(r_0,z)=0,\ \  &{\rm{on}} \ \ \Sigma_{en},\\
\Psi(r_1,z)=0, \ \ &{\rm{on}}\ \  \Sigma_{ex},\\
\p_z\psi(r,\pm 1)=\p_z \Psi(r,\pm 1)=0,  \ \  &{\rm{on}}\ \ \Sigma_{1}^\pm.\\
\end{cases}
\end{equation}
\par Next, one can follow the arguments  in Step 2 of Proposition \ref{pro6} to find smooth functions
\begin{equation*}
\psi_m^\eta(r,z)=\sum_{j=0}^{m}\my_j^\eta(r)\beta_j(z) \ \ {\rm{and}} \ \ \Psi_m^\eta(r,z)=\sum_{j=0}^{m}\ml_j^\eta(r)\beta_j(z),\ \  (r,z)\in \mn,
\end{equation*}
which solve \eqref{5-20} and satisfy
\begin{eqnarray}\label{5-H^4-1-1-a}
&&\Vert\psi_m^\eta\Vert_{H^4(\mn)}
\leq C\left(\|\h G_4^\eta\|_{H^3(\mn)}+\|\h G_5^\eta\|_{H^2(\m)}+\|  G_6\|_{H^3([-1,1])}\right),\\\label{5-H^4-2-2-b}
&&\Vert\Psi_m^\eta\Vert_{H^4(\mn)}
\leq C\left(\|\h G_4^\eta\|_{H^2(\mn)}+\|\h G_5^\eta\|_{H^2(\m)}+\|  G_6\|_{H^2([-1,1])}\right),
\end{eqnarray}
where  the constant $ C>0 $ is independent of $\eta$ and $m$.
\par Finally,  by the arguments  in Step 3 of Proposition \ref{pro6}, we take the limit $m\rightarrow \infty $ of $\{(\psi_m^\eta,\Psi_m^\eta)\}_{m\in\mathbb{N}}$ in order to obtain a $H^4 $
solution to the approximated boundary value problem \eqref{5-20}, then take
another limit $ \eta \rightarrow 0 $  to obtain a $H^4 $
solution to the boundary value problem \eqref{5-18} which satisfies the estimates \eqref{5-5-H^4-1-1-1}-\eqref{5-5-H^4-2-2} and the compatibility condition \eqref{5-5-H^4-1-1-1}.
Thus Proposition \ref{pro5-5} is
proved.

\end{proof}
\par From Proposition \ref{pro5-5}, the well-posedness of \eqref{5-10} directly follows.
\begin{corollary}\label{cor1}
 Fix  $\epsilon_0\in(0,R_0)$,  let    $\bar r_1^\star\in(r_0,r_0+R_0-\epsilon_0]$ and $ \delta_5^\star\in(0, {\delta_4^\star}] $ be from  Lemma \ref{pro5-3} and \ref{pro5-4}, respectively. For  $r_1\in(r_0,\bar r_1^\star)$ and $\max\{\delta_1^\star,\delta_2^\star\}\leq \delta_5^\star$, let the iteration sets $\mj_{\delta_{1}^\star, r_1} $ and $  \mj_{\delta_{2}^\star, r_1} $ be  given by \eqref{5-8} and \eqref{5-9}, respectively.    Then for each $ (\h {\bf W},\h{\bf V})\in \mj_{\delta_{1}^\star, r_1} \times \in \mj_{\delta_{2}^\star, r_1} $, the associated linear boundary value problem \eqref{5-10} has a unique solution  $ (V_1,V_2,V_3)\in \left(H^3(\mn)\right)^2\times H^4(\mn)$ that satisfies
\begin{equation}\label{5-1-e}
\begin{aligned}
\Vert(V_1,V_2)\Vert_{H^3(\mn)}+\Vert V_3\Vert_{H^4(\mn)}
&\leq C\bigg(\|(\h G_1,\h G_2)\|_{H^3(\m)}+\|\h G_3\|_{H^2(\m)}\\
&\quad\quad  +\sigma(b^\star,U_{ 1,en}^\star,U_{2, en}^\star,U_{3, en}^\star,K_{en}^\star,S_{en}^\star,E_{ en}^\star,\Phi_{ ex}^\star)\bigg),\\
\end{aligned}
\end{equation}
 for the constant  $C>0$ depending only on $(b_0, \A_0, U_{0},S_{0},E_{0},r_0,r_1,\epsilon_0)$. Furthermore, the solution $(V_1,V_2,V_3)$ satisfies the compatibility conditions
\begin{equation}\label{5-1-c}
\p_{z} V_1=\p_z^{k-1}V_2=\p_{z}^kV_3=0, \ \ {\rm{on}} \ \Sigma_{1}^\pm,\  \  {\rm{for}} \ \ k=1,3.
\end{equation}
\end{corollary}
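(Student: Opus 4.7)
The plan is to derive Corollary \ref{cor1} directly from Proposition \ref{pro5-5} by unwinding the two changes of variables set up in \eqref{5-12}--\eqref{5-18}. First, I solve the auxiliary Neumann--Dirichlet Poisson problem \eqref{5-12} for $\varphi_1$, using the symmetric extension \eqref{2-13} and standard elliptic theory to obtain $\varphi_1\in H^5(\mn)$ satisfying \eqref{5-15} together with the even-symmetry traces on $\Sigma_1^\pm$. This is the step that consumes the $H^3$ norm of $\h G_2$. Next, the explicit affine lifts preceding \eqref{5-16} reduce \eqref{5-10} to the equivalent problem \eqref{5-16} in the unknowns $(Q_1,Q_2,\Psi)$, in which the curl constraint becomes the homogeneous identity $\partial_r Q_2-\partial_z Q_1=0$ together with $Q_2(r_0,\cdot)=0$.

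Because $\mn$ is simply connected, the closed one-form $Q_1\,\de r+Q_2\,\de z$ admits a unique potential $\psi$ with $\partial_r\psi=Q_1$, $\partial_z\psi=Q_2$ and normalization $\psi(r_0,-1)=0$, and the first equation of \eqref{5-16} rewrites as the second-order hyperbolic-elliptic coupled system \eqref{5-18} for $(\psi,\Psi)$. I then invoke Proposition \ref{pro5-5} to obtain the unique solution $(\psi,\Psi)\in\left(H^4(\mn)\right)^2$ with the estimates \eqref{5-5-H^4-1-1-1}--\eqref{5-5-H^4-2-2} and the parity conditions \eqref{5-5-H^4-1-1-c}.

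Finally, I reconstruct the desired triple by undoing both substitutions, setting
\begin{equation*}
V_1:=\partial_r\psi-\partial_z\varphi_1,\qquad V_2:=\partial_z\psi+\partial_r\varphi_1+U_{3,en}^\star,\qquad V_3:=\Psi+(r-r_1)(E_{en}^\star-E_0)+(\Phi_{ex}^\star-\bar\Phi(r_1)).
\end{equation*}
Since $\psi\in H^4$ and $\varphi_1\in H^5$, one has $(V_1,V_2)\in\left(H^3(\mn)\right)^2$ and $V_3\in H^4(\mn)$; a direct substitution verifies the original equations and boundary conditions of \eqref{5-10}. The composite estimate \eqref{5-1-e} is then obtained by chaining \eqref{5-15}, which controls $\|\varphi_1\|_{H^5}$ by $\|\h G_2\|_{H^3}$; the bound \eqref{5-17}, which controls $(\h G_4,\h G_5,G_6)$ in terms of $(\h G_1,\h G_3)$ and the boundary-data norm $\sigma(\cdots)$; and the estimates of Proposition \ref{pro5-5}. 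Uniqueness follows from linearity: any difference of two solutions feeds zero data through the same reduction, so Lemma \ref{pro5-3} forces $(\psi,\Psi)=0$, hence $(V_1,V_2,V_3)=0$.

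The only genuine bookkeeping obstacle is verifying the compatibility conditions \eqref{5-1-c}, which must be read off from the parities across $\Sigma_1^\pm$. From \eqref{5-15} the potential $\varphi_1$ is odd in $z$ about $\pm 1$, while \eqref{5-5-H^4-1-1-c} makes $\psi$ and $\Psi$ even; combined with the boundary assumptions on $U_{3,en}^\star,E_{en}^\star,\Phi_{ex}^\star$ in \eqref{1-t-2-2}, this yields that $V_1,V_3$ are even and $V_2$ is odd across $\Sigma_1^\pm$ up to the required orders, which is precisely \eqref{5-1-c}. Once the parities line up, the entire argument is a bookkeeping exercise, so no new analytic input beyond Proposition \ref{pro5-5} is required.
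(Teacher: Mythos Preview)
Your proposal is correct and follows essentially the same route as the paper: solve the auxiliary Poisson problem \eqref{5-12} for $\varphi_1$, pass to the potential formulation \eqref{5-18}, apply Proposition \ref{pro5-5}, and then undo the substitutions to recover $(V_1,V_2,V_3)$, with the estimate \eqref{5-1-e} obtained by chaining \eqref{5-15}, \eqref{5-17}, and \eqref{5-5-H^4-1-1-1}--\eqref{5-5-H^4-2-2}, and the compatibility conditions \eqref{5-1-c} read off from \eqref{1-t-2-2}, \eqref{5-15}, and \eqref{5-5-H^4-1-1-c}. Your write-up is in fact somewhat more explicit than the paper's about the uniqueness step and the parity bookkeeping for \eqref{5-1-c}, but the underlying argument is the same.
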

\begin{proof}
By Proposition \ref{pro5-5}, for each   $ (\h {\bf W},\h{\bf V})\in \mj_{\delta_{1}^\star, r_1} \times \in \mj_{\delta_{2}^\star, r_1} $,  the  linear boundary value problem \eqref{5-18} has a unique solution  $ (\psi,\Psi)\in \left(H^4(\mn)\right)^2$ satisfying the estimates \eqref{5-5-H^4-1-1-1} and \eqref{5-5-H^4-2-2}. Note that
\begin{equation*}
\begin{aligned}
V_1(r,z)&=\p_r\psi(r,z)-\p_z \varphi_1(r,z),\ & (r,z)\in \mn,\\
V_2(r,z)&=\p_z\psi(r,z)+\p_r \varphi_1(r,z)+U_{3,en}^\star(z), \ & (r,z)\in \mn,\\
V_3(r,z)&=\Psi(r,z)+(r-r_1)(E_{en}^\star(z)-E_0)-(\Phi_{ex}^\star(z)-\bar\Phi(r_1)),
& (r,z)\in \mn.
 \end{aligned}
  \end{equation*}
  Therefore, it is easy to see that  $(V_1,V_2,V_3)$  solves \eqref{5-10}.  The estimate \eqref{5-1-e} can be obtained by using \eqref{5-15} and  \eqref{5-17}. Finally, the compatibility condition \eqref{5-1-c} holds due to \eqref{1-t-2-2},  \eqref{5-15} and  \eqref{5-5-H^4-1-1-c}.
\end{proof}
\subsection{Proof of Theorem 2.6}\noindent
\par In this section, we  establish     the  nonlinear structural stability of
the background supersonic flow within the class of axisymmetric flows. The proof is divided into three steps.
 \par { \bf Step 1. The  existence and uniqueness of a fixed point of $\mb_1^{\h {\bf W}}$.}
 \par Fix $\epsilon_0\in(0,R_0)$ and  $ r_1\in (r_0,\bar r_1^\star) $ for  $\bar r_1^\star$ from Lemma \ref{pro5-3}.  Assume that
\begin{equation}\label{5-44}
 \max\{\delta_1^\star,\delta_2^\star\}\leq \delta_5^\star,
 \end{equation}
 where $ \delta_5^\star $ is given by Lemma \ref{pro5-4}. For any fixed $ \h {\bf W}\in \mj_{\delta_{1}^\star, r_1}$, one can define a mapping
\begin{equation*}
\mb_1^{\h {\bf W}}(\h{\bf V})={\bf V}, \quad{\rm{ for \ each}} \ \ \h{\bf V}\in \mj_{\delta_{2}^\star, r_1}.
\end{equation*}
Use the abbreviation
    \begin{equation*}
    \sigma_{v}=\sigma(b^\star,U_{ 1,en}^\star,U_{2, en}^\star,U_{3, en}^\star,K_{en}^\star,S_{en}^\star,E_{ en}^\star,\Phi_{ ex}^\star).
    \end{equation*}
   Then it follows from    \eqref{5-1-e}, \eqref{5-7-g} and \eqref{5-7-h}  that  $ (V_1,V_2,V_3)\in \left(H^3(\mn)\right)^2\times H^4(\mn)$ satisfying the estimate
\begin{equation}\label{6-1}
\begin{aligned}
\Vert(V_1,V_2)\Vert_{H^3(\mn)}+\Vert V_3\Vert_{H^4(\mn)}\leq
\mc_3^\star(\delta_1^\star+(\delta_2^\star)^2+\sigma_v),
\end{aligned}
\end{equation}
where the constant  $\mc_3^\star>0$ depends only on $(b_0, \A_0,U_{0}, S_{0},E_{0},r_0,r_1,\epsilon_0)$.
\par
Set
\begin{equation}\label{6-3}
 \delta_2^\star=4\mc_3^\star(\delta_1^\star+\sigma_v).
\end{equation}
Then if
\begin{equation}\label{6-4}
\delta_1^\star+\sigma_v\leq \frac{1}{16(\mc_3^\star)^2},
\end{equation}
one can follow from \eqref{6-1} to obtain that
\begin{equation*}
\begin{aligned}
\Vert(V_1,V_2)\Vert_{H^3(\mn)}+\Vert V_3\Vert_{H^4(\mn)}
\leq \mc_3^\star(\delta_1^\star+(\delta_2^\star)^2+\sigma_v)\leq \frac12\delta_2^\star.
\end{aligned}
\end{equation*}
This implies that the mapping $\mb_1^{\h {\bf W}}$ maps $\mj_{\delta_{2}^\star, r_1}$ into itself.
\par Next, for any fixed $ \h {\bf W}\in \mj_{\delta_{1}^\star, r_1}$, we will show that $\mb_1^{\h {\bf W}}$  is a
a contraction mapping in a low order norm
\begin{equation*}
\|{\bf V}\|_w:=\Vert(V_1,V_2)\Vert_{L^2(\mn)}+\Vert V_3\Vert_{H^1(\mn)}
\end{equation*}
so that one can find a unique fixed point by the contraction mapping theorem. Let $ \h{\bf V}^{(i)}\in \mj_{\delta_{2}^\star, r_1} $, $ i=1,2 $, one has $\mb_1^{\h {\bf W}}(\h{\bf V}^{(i)})={\bf V}^{(i)}.$ Set
   \begin{equation*}
  \h {\bf Y}=(\h Y_1,\h Y_2,\h Y_3) =\h{\bf V}^{(1)}-\h{\bf V}^{(2)}, \quad  {\bf Y}=( Y_1, Y_2, Y_3) ={\bf V}^{(1)}-{\bf V}^{(2)}.
  \end{equation*}
  Then $   {\bf Y} $ satisfies
  \begin{equation}\label{6-6}
\begin{cases}
B_{11}(r,z,\h{\bf V}^{(1)},\h{\bf W})\p_r Y_1+ B_{22}(r,z,\h{\bf V}^{(1)},\h{\bf W})\p_z Y_2
+B_{12}(r,z,\h{\bf V}^{(1)})\p_rY_2\\
+B_{21}(r,z,\h{\bf V}^{(1)})\p_z Y_1
+\x a_1(r)Y_1+\x b_1(r)\p_r Y_3+\x b_2(r)Y_3=\mg_1(r,z,\h{\bf V}^{(1)},\h{\bf V}^{(2)},\h{\bf W}), \ \ &{\rm{in}} \ \ \mn,\\
\partial_r Y_2-\partial_{z} Y_1=\mg_2(r,z,\h{\bf V}^{(1)},\h{\bf V}^{(2)},\h{\bf W}), \ \ &{\rm{in}} \ \ \mn,\\
\bigg(\p_r^2+\frac 1 r\p_r+\p_{z}^2\bigg)Y_3+\x a_2(r)Y_1-\x b_3(r)Y_3=\mg_3(r,z,\h{\bf V}^{(1)},\h{\bf V}^{(2)},\h{\bf W}), \ \ &{\rm{in}} \ \ \mn,\\
Y_1(r_0,z)= Y_2(r_0,z)=\p_r Y_3(r_0,z)=0,\ \  &{\rm{on}} \ \ \Sigma_{en},\\
Y_3(r_1,z)=0, \ \ &{\rm{on}}\ \  \Sigma_{ex},\\
Y_2(r,\pm 1)=\p_z Y_3(r,\pm 1)=0,  \ \  &{\rm{on}}\ \ \Sigma_{1}^\pm,\\
\end{cases}
\end{equation}
where
\begin{equation*}
\begin{aligned}
&\mg_1(r,z,\h{\bf V}^{(1)},\h{\bf V}^{(2)},\h{\bf W})\\
&=-\bigg(\left(B_{11}(r,z,\h{\bf V}^{(1)},\h {\bf W})-B_{11}(r,z,\h{\bf V}^{(1)},\h {\bf W})\right)\p_r V_1^{(2)}
 + \left(B_{22}(r,z,\h{\bf V}^{(1)},\h {\bf W})-B_{22}(r,z,\h{\bf V}^{(2)},\h {\bf W})\right)\p_z V_2^{(2)}\\
& \qquad +\left(B_{12}(r,z,\h{\bf V}^{(1)})-B_{12}(r,z,\h{\bf V}^{(2)})\right)\p_rV_2^{(2) } +\left(B_{21}(r,z,\h{\bf V}^{(1)})-B_{21}(r,z,\h{\bf V}^{(2)})\right)\p_z V_1^{(2)}\bigg)\\
&\quad +G_1(r,z,\h{\bf V}^{(1)},\h {\bf W})-G_1(r,z,\h{\bf V}^{(2)},\h {\bf W}),\\
&\mg_2(r,z,\h{\bf V}^{(1)},\h{\bf V}^{(2)},\h{\bf W})=G_2(r,z,\h{\bf V}^{(1)},\h {\bf W})-G_2(r,z,\h{\bf V}^{(2)},\h {\bf W}),\\
&\mg_3(r,z,\h{\bf V}^{(1)},\h{\bf V}^{(2)},\h{\bf W})=G_3(r,z,\h{\bf V}^{(1)},\h {\bf W})-G_3(r,z,\h{\bf V}^{(2)},\h {\bf W}).\\
\end{aligned}
\end{equation*}
\par  Next, to simplicity the notation, denote
\begin{equation*}
\begin{aligned}
&\h B_{11}^{(1)}(r,z):=B_{11}(r,z,\h{\bf V}^{(1)},\h W_1),  \ \ \h B_{22}^{(1)}(r,z):=B_{22}(r,z,\h{\bf V}^{(1)},\h W_1), \ & (r,z)\in \mn,\\
&\h B_{12}^{(1)}(r,z):=B_{12}(r,z,\h{\bf V}^{(1)}), \qquad \ \h B_{21}^{(1)}(r,z):=B_{21}(r,z,\h{\bf V}^{(1)}),\ & (r,z)\in \mn,\\
&\h\mg_1(r,z):=\mg_1(r,z,\h{\bf V}^{(1)},\h{\bf V}^{(2)},\h {\bf W}),\ & (r,z)\in \mn,\\
&\h\mg_2(r,z):=\mg_2(r,z,\h{\bf V}^{(1)},\h{\bf V}^{(2)},\h {\bf W}),\ & (r,z)\in \mn,\\
&\h\mg_3(r,z):=\mg_2(r,z,\h{\bf V}^{(1)},\h{\bf V}^{(2)},\h {\bf W}), \ & (r,z)\in \mn.
\end{aligned}
\end{equation*}
In order to estimate  $   {\bf Y} $, one can decompose $Y_1$ and $Y_2$ as
\begin{equation*}
Y_1=-\p_{z} \varphi_2+\p_r \psi_2,\ \ \ Y_2= \p_r\varphi_2 + \p_{z}\psi_2, \ \ {\rm{in}}\ \ \mn,
\end{equation*}
where
$\varphi_2$ and $\psi_2$ solve the following boundary value problems, respectively:
\begin{equation}\label{6-7}
\begin{cases}
\partial_r^2 \varphi_2+\partial_{z}^2 \varphi_2=\h\mg_2 ,\ \ &{\rm{in}}\ \ \mn,\\
 \p_r\varphi_2(r_0,z)=0, &{\rm{on}}\ \ \Sigma_{en},\\
 \p_r\varphi_2(r_1,z)=0, &{\rm{on}}\ \ \Sigma_{ex},\\
 \varphi_2(r,\pm 1)=0, &{\rm{on}}\ \ \Sigma_{1}^\pm,\\
 \end{cases}
\end{equation}
and
\begin{equation}\label{6-6-1}
\begin{cases}
\h B_{11}^{(1)}\p_r^2\psi_2+ \h B_{22}^{(1)}\p_z^2\psi_2
+2\h B_{12}^{(1)}\p_{rz}\psi_2
+\x a_1\p_r\psi_2
+\x b_1\p_r Y_3+\x b_2Y_3\\
=\h\mg_1-\bigg(-\h B_{11}^{(1)}\p_{rz}^2 \varphi_2
+ \h B_{22}^{(1)}\p_{rz}^2 \varphi_2
 +\h B_{12}^{(1)}(\p_r^2\varphi_2-\p_z^2 \varphi_2)
-\x a_1\p_z\varphi_2\bigg), \ \ &{\rm{in}} \ \ \mn,\\
\bigg(\p_r^2+\frac 1 r\p_r+\p_{z}^2\bigg)Y_3+\x a_2\p_r \psi_2-\x b_3Y_3
=\h\mg_3
+\x a_2\p_z \varphi_2, \ \ &{\rm{in}} \ \ \mn,\\
\p_r \psi_2(r_0,z)=\p_z \varphi_2(r_0,z),  \ \psi_2(r_0,z)=\p_r Y_3(r_0,z)=0,\ \  &{\rm{on}} \ \ \Sigma_{en},\\
Y_3(r_1,z)=0, \ \ &{\rm{on}}\ \  \Sigma_{ex},\\
\p_z \psi_2(r,\pm 1)=\p_z Y_3(r,\pm 1)=0,  \ \  &{\rm{on}}\ \ \Sigma_{1}^\pm.\\
\end{cases}
\end{equation}
Then  similar arguments  as for \eqref{5-12}  yield
\begin{equation*}
\begin{aligned}
&\|\varphi_2\|_{H^2(\mn)}\leq C\|\h \mg_2\|_{L^2(\mn)}\leq C(\delta_1^\star+\delta_2^\star)\left(\|(\h Y_1,\h Y_2)\|_{L^2(\mn)}+\|\h Y_3\|_{H^1(\mn)}\right).
\end{aligned}
\end{equation*}
Then one can  use  Lemma \ref{pro5-3} and combine  the $H^2(\mn)$ estimate of $\varphi_2$  to  obtain that
\begin{equation*}
\begin{aligned}
\|(\psi_2,Y_3)\|_{H^1(\mn)}&\leq C\left(\|\h\mg_1\|_{L^2(\mn)}+\|\h\mg_3\|_{L^2(\mn)}
+\|\varphi_2\|_{H^2(\mn)}+\|\p_{z}\varphi_2(r_0,\cdot)\|_{L^2([-1,1])}
\right)\\
&\leq C(\delta_1^\star+\delta_2^\star) \left(\|(\h Y_1,\h Y_2)\|_{L^2(\mn)}+\|\h Y_3\|_{H^1(\mn)}\right).
\end{aligned}
\end{equation*}
 Collecting the above estimates leads to
\begin{equation}\label{6-9}
\begin{aligned}
\|(Y_1,Y_2)\|_{L^2(\mn)}+\|Y_3\|_{H^1(\mn)}\leq \mc_4^\star(\delta_1^\star+\delta_2^\star) \left(\|(\h Y_1,\h Y_2)\|_{L^2(\mn)}+\|\h Y_3\|_{H^1(\mn)}\right)
\end{aligned}
\end{equation}
  for the constant $\mc_4^\star>0$ depending only on $(b_0, \A_0,U_{0}, S_{0},E_{0},r_0,r_1,\epsilon_0)$.
 This, together with \eqref{6-3}, yields that
 \begin{equation*}
\begin{aligned}
\|{\bf Y}\|_w \leq 4\mc_4^\star(\mc_3^\star+1)(\delta_1^\star+\sigma_v) \|\h{\bf Y}\|_w.
\end{aligned}
\end{equation*}
 If $\delta_1^\star $ and $\sigma_v$ satisfy
\begin{equation}\label{6-10}
\mc_4^\star(\mc_3^\star+1)(\delta_1^\star+\sigma_v)\leq \frac18,
\end{equation}
then  one can conclude that  $\mb_1^{\h {\bf W}}$ has a unique fixed point in $\mj_{\delta_{2}^\star, r_1}$ provided that the conditions \eqref{6-4} and \eqref{6-10} hold.
\par { \bf Step 2. The  existence and uniqueness of a fixed point of $\mb_2$.}
\par For any fixed $ \h {\bf W}\in \mj_{\delta_{1}^\star, r_1}$,  { \bf Step 1} shows that  $\mb_1^{\h {\bf W}}$ has a unique fixed point in $ \h{\bf V}\in \mj_{\delta_{2}\star, r_1}$ provided that the conditions \eqref{6-4} and \eqref{6-10} hold.  Note that
the fixed point $\h {\bf V}$ solves the nonlinear boundary value problem:
\begin{equation}\label{6-11}
\begin{cases}
B_{11}(r,z,\h{\bf V},\h{\bf W})\p_r \h V_1+ B_{22}(r,z,\h{\bf V},\h{\bf W})\p_z \h V_2
+B_{12}(r,z,\h{\bf V})\p_r\h V_2+B_{21}(r,z,\h{\bf V})\p_z \h V_1\\
+\x a_1(r)\h V_1+\x b_1(r)\p_r \h V_3+\x b_2(r)\h V_3=G_1(r,z,\h{\bf V},\h{\bf W}), \ \ &{\rm{in}} \ \ \mn,\\
\partial_r \h V_2-\partial_{z} \h V_1=G_2(r,z,\h {\bf V},\h {\bf W}), \ \ &{\rm{in}} \ \ \mn,\\
\bigg(\p_r^2+\frac 1 r\p_r+\p_{z}^2\bigg)\h V_3+\x a_2(r)\h V_1-\x b_3(r)\h V_3=G_3(r,z,\h{\bf V},\h{\bf W}), \ \ &{\rm{in}} \ \ \mn,\\
(\h V_1, \h V_2,\p_r \h V_3)(r_0,z)=(U_{1,en}^\star,   U_{3,en}^\star,E_{en}^\star)(z)
-(U_0,0,E_0),\ \  &{\rm{on}} \ \ \Sigma_{en},\\
\h V_3(r_1,z)=\Phi_{ex}^\star(z)-\bar\Phi(r_1), \ \ &{\rm{on}}\ \  \Sigma_{ex},\\
\h V_2(r,\pm 1)=\p_z \h V_3(r,\pm 1)=0,  \ \  &{\rm{on}}\ \ \Sigma_{1}^\pm.\\
\end{cases}
\end{equation}
Therefore, $( \h U_1, \h U_3, \h \Phi)=(\h V_1, \h V_2, \h V_3)+(\bar U, 0, \bar \Phi)\in (H^3(\mn))^2\times H^4(\mn) $ solves
\begin{equation}\label{6-12}
\begin{cases}
(c^2(\h{\bf W}+{\bf W}_0,  \h U_1, \h U_3,\h\Phi)- \h U_1^2)\p_r  \h U_1+(c^2(\h{\bf W}+{\bf W}_0, \h U_1,  \h U_3,\h\Phi)- \h U_3^2)\p_z \h U_3\\
-{\h U_1\h U_3}\bigg(\p_r\h U_3+\p_z \h U_1\bigg)
+\frac {c^2(\h{\bf W}+{\bf W}_0,\h U_1,  \h U_3,\h \Phi)\h U_1}{r}+(\h U_1\p_r+\h U_3\p_z)\h\Phi
=0,\ \ &{\rm{in}}\ \ \mn,\\
{\h U_1}(\partial_r \h U_3-\partial_{z}\h U_1)=\h W_1\p_z \h W_1+\frac{e^{\h W_2+S_0} }{\gamma-1}\mh^{\gamma-1}(\h {\bf W}+{\bf W}_0,\h U_1, \h U_3,\h\Phi){\p_z \h W_3}-\p_z \h W_2 ,\ \ &{\rm{in}}\ \ \mn,\\
\bigg(\p_r^2+\frac 1 r\p_r+\frac{1}{r^2}\p_{\th}^2\bigg) \h \Phi=\mh(\h {\bf W}+{\bf W}_0,\h U_1, \h U_3,\h\Phi)-b^\star,\ \ &{\rm{in}}\ \ \mn,\\
(\h U_1, \h U_3,  \p_r\h\Phi)(r_0,z)=(U_{1,en}^\star, U_{3,en}^\star,E_{en}^\star)(z),\ \ &{\rm{on}}\ \ \Sigma_{en},\\
\h\Phi(r_1,z)=\Phi_{ex}^\star(z), \ \ &{\rm{on}}\ \ \Sigma_{ex},\\
\h U_3(r,\pm 1)=\p_z\h\Phi(r,\pm 1)=0, \ \  &{\rm{on}}\ \ \Sigma_{1}^\pm.\\
\end{cases}
\end{equation}
Furthermore, one can follow from \eqref{5-3} to derive that $ \mh(\h {\bf W}+{\bf W}_0,\h U_1, \h U_3,\h\Phi)\in H^3(\mn) $. Applying the Morrey inequality yields
\begin{equation*}
\|\h\Phi\|_{C^{2,\frac12}(\overline\mn)}\leq C\|\h\Phi\|_{H^{4}(\mn)} \ \ {\rm{and}} \ \ \|\mh\|_{C^{1,\frac12}(\overline\mn)}\leq C\|\mh\|_{H^{3}(\mn)}.
\end{equation*}
Then we consider the following problem:
\begin{equation}\label{6-12-f}
\begin{cases}
\bigg(\p_r^2+\frac 1 r\p_r+\frac{1}{r^2}\p_{\th}^2\bigg) \h \Phi=\mh(\h {\bf W}+{\bf W}_0,\h U_1, \h U_3,\h\Phi)-b^\star,\ \ &{\rm{in}}\ \ \mn,\\
  \p_r\h\Phi(r_0,z)=E_{en}^\star(z),\ \ &{\rm{on}}\ \ \Sigma_{en},\\
  \h\Phi(r_1,z)=\Phi_{ex}^\star(z), \ \ &{\rm{on}}\ \ \Sigma_{ex},\\
  \p_z\h\Phi(r,\pm 1)=0, \ \  &{\rm{on}}\ \ \Sigma_{1}^\pm.\\
  \end{cases}
\end{equation}
It follows from \eqref{1-t-2-2}, \eqref{5-8}  and \eqref{5-1-c} that $ \p_z(\mh,b^\star)=0 $ on $ \Sigma_{1}^\pm $ and $ (E_{en}^\star,\Phi_{ex}^\star)'(\pm 1)=0 $. By the standard Schauder
estimate and the method of reflection, we obtain that
\begin{equation}\label{6-12-f-f}
\|\h\Phi\|_{C^{3,\frac12}(\overline{\mn})}\leq C\bigg(\|\h\Phi\|_{H^{4}(\mn)}+\|\mh\|_{H^{3}(\mn)}
+\|b^\star\|_{C^{2}(\overline\mn)}+\|(E_{ en}^\star,\Phi_{ ex}^\star)\|_{C^4([-1,1])}\bigg).
\end{equation}
\par Next, for any $\h {\bf W}\in\mj_{\delta_{1}^\star, r_1} $, we construct a mapping  $\mb_2$: $\h {\bf W}\in\mj_{\delta_{1}^\star, r_1} \mapsto  {\bf W}\in \mj_{\delta_{1}^\star, r_1}$, where $ {\bf W}=(W_1,W_2, W_3)$ solves the following transport equations, respectively:
\begin{equation}\label{6-13}
\begin{cases}
\begin{aligned}
&\mh(\h{\bf W}+{\bf W}_0,\h U_1,  \h U_3,\h \Phi)\bigg(\h U_1\partial_r +{\h U_3}\partial_{z}\bigg) (rW_1)=0, \ \ {\rm{in}}\ \ \mn,\\
&W_1(r_0,z)=  U_{2,en}^\star(z), \qquad\qquad\qquad \qquad\qquad \quad \ \ \ \ {\rm{on}}\ \ \Sigma_{en},\\
 \end{aligned}
\end{cases}
\end{equation}
and
\begin{equation}\label{6-14}
\begin{cases}
\begin{aligned}
&\mh(\h{\bf W}+{\bf W}_0,\h U_1,  \h U_3,\h \Phi)\bigg(\h U_1\partial_r +{\h U_3}\partial_{z}\bigg)  W_2=0,\ \ {\rm{in}}\ \ \mn,\\
&W_2(r_0,z)=  K_{en}^\star(z)-K_0, \qquad\qquad\qquad \qquad \quad  \ {\rm{on}}\ \ \Sigma_{en},\\
 \end{aligned}
\end{cases}
\end{equation}
and
\begin{equation}\label{6-15}
\begin{cases}
\begin{aligned}
&\mh(\h{\bf W}+{\bf W}_0,\h U_1,  \h U_3,\h \Phi)\bigg(\h U_1\partial_r +{\h U_3}\partial_{z}\bigg)  W_3=0,\ \ {\rm{in}}\ \ \mn,\\
 &W_3(r_0,z)=S_{en}^\star(z)-S_0,\qquad\qquad\qquad \qquad \quad \ \ {\rm{on}}\ \ \Sigma_{en}.\\
\end{aligned}
\end{cases}
\end{equation}
\par It follows from the first equation in \eqref{6-12} that
\begin{equation}\label{6-16}
\partial_r\bigg(r\mh(\h {\bf W}+{\bf W}_0,\h U_1, \h U_3,\h\Phi) \h U_1\bigg)+\partial_{z}\bigg(r\mh(\h {\bf W}+{\bf W}_0,\h U_1, \h U_3,\h\Phi) \h U_3\bigg)=0,\ \ {\rm{in}}\ \ \mn,
\end{equation}
from which  one can define a stream function on $[r_0,r_1]\times [-1,1]$ as
\begin{equation*}
\mathscr{L}(r,z)=\int_{-1}^{z}r_0\left(\mh(\h {\bf W}+{\bf W}_0,\h U_1, \h U_3,\h\Phi) \h U_1\right)(r_0,s)\de s
-\int_{r_0}^r s\left(\mh(\h {\bf W}+{\bf W}_0,\h U_1, \h U_3,\h\Phi) \h U_3\right)(s,z) \de s.
\end{equation*}
Then  the function $\mathscr{L}$ has the properties
\begin{equation*}\begin{cases}
\p_r\mathscr{L}(r,z)=-r\left(\mh(\h {\bf W}+{\bf W}_0,\h U_1, \h U_3,\h\Phi) \h U_3\right)(r,z)\in H^3(\mn),\\
\p_z \mathscr{L}(r,z)=r\left(\mh(\h {\bf W}+{\bf W}_0,\h U_1, \h U_3,\h\Phi) \h U_1\right)(r,z)\in H^3(\mn),\\
\mathscr{L}(r,z)\in  H^4(\mn).\\
\end{cases}
\end{equation*}
Since $\p_{r}\mathscr{L}(r,\pm 1)=0$, one has $\mathscr{L}(r,\pm 1)=\mathscr{L}(r_0,\pm 1)$. Note that $\bar{U}(r)>0$ for each $r\in [r_0,r_1]$, and $\h{{\bf V}}\in \mj_{\delta_{2}^\star, r_1}$,  hence $\p_z\mathscr{L}(r,z)=r\left(\mh(\h {\bf W}+{\bf W}_0,\h U_1, \h U_3,\h\Phi)(\bar U+\h V_1)\right)(r,z)>0$, from which one obtains $\mathscr{L}(r,z)$ is an strictly increasing function of $z$ for each fixed $r\in [r_0, r_1]$. These imply that the closed interval $[\mathscr{L}(r,-1),\mathscr{L}(r,1)]$ is simply equal to $[\mathscr{L}(r_0,-1), \mathscr{L}(r_0,1)]$. Denote the inverse function of $\mathscr{L}(r_0,\cdot): [-1,1]\to [\mathscr{L}(r_0,-1), \mathscr{L}(r_0,1)]$ by
$\mathscr{L}_{r_0}^{-1}(\cdot): [\mathscr{L}(r_0,-1), \mathscr{L}(r_0,-1)]\to [-1,1]$.
\par  Define
\begin{equation}\label{6-16-1}
\begin{cases}
\begin{aligned}
&W_1(r,z)=  \frac{r_0}{r}U_{2,en}^\star\left(\mathscr{L}_{r_0}^{-1}(\mathscr{L}(r,z))\right),\\
&W_2(r,z)=  K_{en}^\star\left(\mathscr{L}_{r_0}^{-1}(\mathscr{L}(r,z))\right)-K_0\\
&W_3(r,z)=  S_{en}^\star\left(\mathscr{L}_{r_0}^{-1}(\mathscr{L}(r,z))\right)-S_0.
\end{aligned}
\end{cases}
\end{equation}
Then one can easily verify that $(W_1,W_2,W_3)$ solves \eqref{6-13} and \eqref{6-14} and \eqref{6-15}, respectively. Furthermore, it follows from \eqref{5-3} and \eqref{6-12-f-f} that
\begin{equation*}
\begin{aligned}
 &\bigg(\mh(\h {\bf W}+{\bf W}_0,\h U_1, \h U_3,\h\Phi) \h U_1\bigg)(r_0,\cdot)\\
 &=\left(\bigg(\frac{\gamma-1}{\gamma e^{S_{en}^\star}}\bigg)^{\frac{1}{\gamma-1}}\bigg(K_{en}^\star+\h\Phi-\frac{1}{2}\left(U_{ 1,en}^\star)^2+(U_{ 2,en}^\star)^2+(U_{ 3,en}^\star)^2\right)\bigg)
^{\frac{1}{\gamma-1}}U_{ 1,en}^\star\right)(r_0,\cdot)\in C^{3}([-1,1]).
\end{aligned}
\end{equation*}
  Then one derives $\mathscr{L}_{r_0}^{-1}(\cdot)\in
C^{4}([\mathscr{L}(r_0,-1), \mathscr{L}(r_0,1)])$ and
\begin{equation}\label{6-17}
\begin{cases}
\begin{aligned}
&\|W_1\|_{H^4(\mn)}\leq \mc_5^\star \|U_{2,en}^\star\|_{C^4([-1,1])},\\
&\|W_2\|_{H^4(\mn)}\leq \mc_5^\star \|K_{ en}^\star-K_0\|_{C^4([-1,1])},\\
&\|W_3\|_{H^4(\mn)}\leq \mc_5^\star \|S_{ en}^\star-S_0\|_{C^4([-1,1])},\\
\end{aligned}
\end{cases}
\end{equation}
where the constant $\mc_5^\star>0$ depends only on $(b_0, \A_0,U_{0}, S_{0},E_{0},r_0,r_1,\epsilon_0)$.
Moreover, it follows from \eqref{1-t-2-2} that  \begin{equation}
\label{6-19}
\p_{z}^k W_1=\p_{z}^k W_2=\p_{z}^k W_3=0, \ \ {\rm{on}} \ \Sigma_{1}^\pm,  \ \  {\rm{for}} \ \ k=1,3.
\end{equation}
    \par  Define another iteration mapping
    \begin{equation*}
\mb_2(\h{\bf W})={\bf W}, \quad{\rm{ for \ each}} \ \ \h{\bf W}\in \mj_{\delta_{1}^\star, r_1}.
\end{equation*}
Set
\begin{equation}
\label{6-20}
\delta_1^\star= 6\mc_5^\star\sigma_v.
\end{equation}
Then \eqref{6-17} yields that
\begin{equation*}
\|{\bf W}\|_{H^4(\mn)}\leq 3\mc_5^\star\sigma\leq \frac12 \delta_1.
\end{equation*}
This shows that the mapping $\mb_2$ maps $\mj_{\delta_{1}^\star, r_1}$ into itself.
It remains to show that the mapping $\mb_2$ is contractive in a low order norm for suitably small $\sigma_v$. Let ${\bf W}^{(i)}=\mb_2(\h {\bf W}^{(i)}) $ for
any $\h{\bf W}^{(i)}\in \mj_{\delta_{1}^\star, r_1}, (i=1,2)$ and denote
$$\h {\bf W}^{d}=\h {\bf W}^{(1)}-\h {\bf W}^{(2)}\ \ {\rm{ and }}\ \ {\bf W}^{d}={\bf W}^{(1)}-{\bf W}^{(2)}. $$
Furthermore, set $$( \h U_1^{(i)}, \h U_3^{(i)}, \h \Phi^{(i)})=(\h V_1^{(i)}, \h V_2^{(i)}, \h V_3^{(i)})+(\bar U, 0, \bar \Phi),$$ where $ \h {\bf V}^{(i)} $ is the unique fixed point of $\mb_1^{(\h {\bf W}^{(i)})}$ for $i=1,2$.
 Then it follows from \eqref{6-16-1} that
\begin{equation}\label{6-21}
\begin{cases}
\begin{aligned}
&W_1^{(i)}(r,z)=  \frac{r_0}{r}U_{2,en}^\star\left(\mathscr{T}^{(i)}(r,z)\right),\\
&W_2^{(i)}(r,z)=  K_{en}^\star\left(\mathscr{T}^{(i)}(r,z)\right)-K_0\\
&W_3^{(i)}(r,z)=  S_{en}^\star\left(\mathscr{T}^{(i)}(r,z)\right)-S_0.
\end{aligned}
\end{cases}
\end{equation}
with $ \mathscr{T}^{(i)}(r,z)=(\mathscr{L}_{r_0}^{(i)})^{-1}(\mathscr{L}^{(i)}(r,z)) $ and
\begin{equation*}
\begin{aligned}
\mathscr{L}^{(i)}(r,z)&=\int_{-1}^{z}r_0\left(\mh(\h {\bf W}^{(i)}+{\bf W}_0,\h U_1^{(i)}, \h U_3^{(i)},\h\Phi^{(i)}) \h U_1^{(i)}\right)(r_0,s)\de s\\
&\quad-\int_{r_0}^r s\left(\mh(\h {\bf W}^{(i)}+{\bf W}_0,\h U_1^{(i)}, \h U_3^{(i)},\h\Phi^{(i)}) \h U_3^{(i)}\right)(s,z) \de s.
\end{aligned}
\end{equation*}
Here $(\mathscr{L}_{r_0}^{(i)})^{-1}$: $t\in [\mathscr{L}^{(i)}(r_0,-1), \mathscr{L}^{(i)}(r_0,1)]\mapsto z\in [-1,1]$ is the inverse function of $\mathscr{L}^{(i)}(r_0,\cdot)$: $z\in [-1,1]\mapsto t\in [\mathscr{L}^{(i)}(r_0,-1), \mathscr{L}^{(i)}(r_0,1)]$,
Therefore,
 one obtains
\begin{equation*}
|W_1^d|=|W_1^{(1)}-W_1^{(2)}|\leq \|(U_{2,en}^\star)'\|_{L^\infty([-1,1])}|\mathscr{T}^{(1)}(r,z)-\mathscr{T}^{(2)}(r,z)|.
\end{equation*}
     Note that $ \mathscr{L}^{(i)}_{r_0}\left(\mathscr{T}^{(i)}(r,z)\right)
     =\mathscr{L}^{(i)}(r,z)$. Then it holds that
     \begin{align*}
&\int_{\mathscr{T}^{(2)}(r,z)}^{\mathscr{T}^{(1)}(r,z)}
r_0\left(\mh(\h {\bf W}^{(1)}+{\bf W}_0,\h U_1^{(1)}, \h U_3^{(1)},\h\Phi^{(1)})\h U_1^{(1)}\right)(r_0,s) \de s
=\mathscr{L}^{(1)}(r,z)-\mathscr{L}^{(2)}(r,z)\\
&\quad-\int_{-1}^{\mathscr{T}^{(2)}(r,z)}
r_0\left(\mh(\h {\bf W}^{(1)}+{\bf W}_0,\h U_1^{(1)}, \h U_3^{(1)},\h\Phi^{(1)})\h U_1^{(1)}-\mh(\h {\bf W}^{(2)}+{\bf W}_0,\h U_1^{(2)}, \h U_3^{(2)},\h\Phi^{(2)})\h U_1^{(2)}\right)(r_0,s)\de s,
\end{align*}
from which one obtains
\begin{align*}
&\mathfrak{m}^{(1)}|\mathscr{T}^{(1)}(r,z)-\mathscr{T}^{(2)}(r,z)|
\leq |\mathscr{L}^{(1)}(r,z)-\mathscr{L}^{(2)}(r,z)|\\
&\quad+\int_{-1}^{1}r_0\left|\mh(\h {\bf W}^{(1)}+{\bf W}_0,\h U_1^{(1)}, \h U_3^{(1)},\h\Phi^{(1)})\h U_1^{(1)}-\mh(\h {\bf W}^{(2)}+{\bf W}_0,\h U_1^{(2)}, \h U_3^{(2)},\h\Phi^{(2)})\h U_1^{(2)}\right|(r_0,s)\de s
\end{align*}
with $\mathfrak{m}^{(i)}:=\displaystyle\min_{z\in[-1,1]}r_0(\mh(\h {\bf W}^{(i)}+{\bf W}_0,\e U_1^{(i)}, \h U_3^{(i)},\h\Phi^{(i)})\h U_1^{(i)})(r_0,z)>0$. Noting that
\begin{equation*}
\begin{aligned}
&(\h W_1^{(1)}-\h W_1^{(2)})(r_0,z)=(\h W_2^{(1)}-\h W_2^{(2)})(r_0,z)=(\h W_3^{(1)}-\h W_3^{(2)})(r_0,z)\\
&=(\h U_1^{(1)}-\h U_1^{(2)})(r_0,z)=(\h U_3^{(1)}-\h U_3^{(2)})(r_0,z)\equiv 0.
\end{aligned}
\end{equation*}
Then one gets
\begin{equation}\label{6-22}
\|W_1^d\|_{L^{2}(\mn)}\leq C\sigma_v\bigg(\|(\h{{\bf V}}^{(1)}-\h{{\bf V}}^{(2)},\h {\bf W}^d)\|_{L^{2}(\mn)}+\|(\h V_{3}^{(1)}-\h V_{3}^{(2)})(r_0,\cdot)\|_{L^{2}([-1,1])}\bigg).
\end{equation}
Furthermore, there holds
\begin{equation*}
\begin{aligned}
|\p_{r} W_1^d|&=\frac{r_0}{r}\left|(U_{2,en}^\star)'\left(\mathscr{T}^{(1)}(r,z)\right)\p_r \mathscr{T}^{(1)}(r,z)-(U_{2,en}^\star)'\left(\mathscr{T}^{(2)}(r,z)\right)\p_r \mathscr{T}^{(2)}(r,z)\right|\\
&\quad-\frac{r_0}{r^{2}}\left|U_{2,en}^\star\left(\mathscr{T}^{(1)}(r,z)\right)-
U_{2,en}^\star\left(\mathscr{T}^{(2)}(r,z)\right)\right|\\
&=\frac{r_0}{r}\left|\left((U_{2,en}^\star)'\left(\mathscr{T}^{(1)}(r,z)\right)-
(U_{2,en}^\star)'\left(\mathscr{T}^{(2)}(r,z)\right)\right)\p_r \mathscr{T}^{(1)}+ (U_{2,en}^\star)'\left(\mathscr{T}^{(2)}(r,z)\right)(\p_r \mathscr{T}^{(1)}-\p_r
\mathscr{T}^{(2)})\right|\\
&\quad-\frac{r_0}{r^{2}}\left|U_{2,en}^\star\left(\mathscr{T}^{(1)}(r,z)\right)-
U_{2,en}^\star\left(\mathscr{T}^{(2)}(r,z)\right)\right|\\
&\leq \|(U_{2,en}^\star)''\|_{L^\infty([-1,1])}
\left|\mathscr{T}^{(1)}(r,z)-\mathscr{T}^{(2)}(r,z)\right|\frac{1}{\mathfrak{m}^{(1)}}\|\nabla
\mathscr{L}^{(1)}(r,z)\|_{L^\infty(\mn)}\\
&\quad+ \|(U_{2,en}^\star)'\|_{L^\infty([-1,1])}\frac{\|\nabla
\mathscr{L}^{(1)}\|_{L^\infty(\mn)}}{\mathfrak{m}^{(1)}\mathfrak{m}^{(2)}}
\bigg|\left(\mh(\h {\bf W}^{(1)}+{\bf W}_0,\h U_1^{(1)}, \h U_3^{(1)},\h\Phi^{(1)})\h U_1^{(1)}\right)(r_0,\mathscr{T}^{(1)})
\\
&\quad-\left(\mh(\h {\bf W}^{(2)}+{\bf W}_0,\h U_1^{(2)}, \h U_3^{(2)},\h\Phi^{(2)})\h U_1^{(2)}\right)(r_0,\mathscr{T}^{(2)})\bigg|+ \|(U_{2,en}^\star)'\|_{L^\infty([-1,1])}\left|\mathscr{T}^{(1)}(r,z)
-\mathscr{T}^{(2)}(r,z)\right|,
\end{aligned}
\end{equation*}
and similar computations are valid for $\p_{z} W_1^d$. Then one has
\begin{equation}\label{6-23}
\|\nabla W_1^d\|_{L^{2}(\mn)}\leq C\sigma_v\bigg(\|(\h{{\bf V}}^{(1)}-\h{{\bf V}}^{(2)},\h {\bf W}^d)\|_{L^{2}(\mn)}+\|(\h V_{3}^{(1)}-\h V_{3}^{(2)})(r_0,\cdot)\|_{L^{2}([-1,1])}\bigg).
\end{equation}
Collecting the estimates \eqref{6-22} and \eqref{6-23} leads to
\begin{align}\label{6-24}
\|W_1^d\|_{H^1(\mn)}\leq C\sigma_v\bigg(\|(\h{{\bf V}}^{(1)}-\h{{\bf V}}^{(2)},\h {\bf W}^d)\|_{L^{2}(\mn)}+\|(\h V_{3}^{(1)}-\h V_{3}^{(2)})(r_0,\cdot)\|_{L^{2}([-1,1])}\bigg).
\end{align}
Same estimate holds for $W_2^d$ and $W_3^d$. Therefore, one gets
\begin{align}\label{6-25}
\|{\bf W}^d\|_{H^1(\mn)}\leq \mc_6^\star
\sigma_v\bigg(\|(\h{{\bf V}}^{(1)}-\h{{\bf V}}^{(2)},\h {\bf W}^d)\|_{L^{2}(\mn)}+\|(\h V_{3}^{(1)}-\h V_{3}^{(2)})(r_0,\cdot)\|_{L^{2}([-1,1])}\bigg)
\end{align}
 for the constant $\mc_6^\star>0$ depending only on $(b_0, \A_0,U_{0}, S_{0},E_{0},r_0,r_1,\epsilon_0)$.
\par
In the following, it remains to estimate
\begin{align*}
\|\h{{\bf V}}^{(1)}-\h{{\bf V}}^{(2)}\|_{L^{2}(\mn)}+\|(\h V_{3}^{(1)}-\h V_{3}^{(2)})(r_0,\cdot)\|_{L^{2}([-1,1])}.
\end{align*}
Indeed, set ${\bf R}=\h{{\bf V}}^{(1)}-\h{{\bf V}}^{(2)}$. It follows from \eqref{6-11} that
\begin{equation}\label{6-26}
\begin{cases}
B_{11}(r,z,\h{\bf V}^{(1)},\h{\bf W}^{(1)})\p_r  R_1+ B_{22}(r,z,h{\bf V}^{(1)},\h{\bf W}^{(1)})\p_z  R_2\\
+B_{12}(r,z,\h{\bf V}^{(1)})\p_r R_2
+B_{21}(r,z,\h{\bf V}^{(1)})\p_z  R_1
+\x a_1(r) R_1\\
+\x b_1(r)\p_r  R_3+\x b_2(r) R_3=\mg_4(r,z,\h{\bf W}^{(1)},\h{\bf V}^{(1)},\h{\bf W}^{(2)},\h{\bf V}^{(2)}), \ \ &{\rm{in}} \ \ \mn,\\
\partial_r  R_2-\partial_{z}  R_1=\mg_5(r,z,\h{\bf W}^{(1)},\h{\bf V}^{(1)},\h{\bf W}^{(2)},\h{\bf V}^{(2)}), \ \ &{\rm{in}} \ \ \mn,\\
\bigg(\p_r^{2}+\frac 1 r\p_r+\p_{z}^{2}\bigg) R_3+\x a_2(r)R_1-\x b_3(r) R_3\\
=\mg_6(r,z,\h{\bf W}^{(1)},\h{\bf V}^{(1)},\h{\bf W}^{(2)},\h{\bf V}^{(2)}), \ \ &{\rm{in}} \ \ \mn,\\
R_1(r_0,z)= R_2(r_0,z)=\p_r R_3(r_0,z)=0,\ \  &{\rm{on}} \ \ \Sigma_{en},\\
R_3(r_1,z)=0, \ \ &{\rm{on}}\ \  \Sigma_{ex},\\
R_2(r,\pm 1)=\p_z R_3(r,\pm 1)=0,  \ \  &{\rm{on}}\ \ \Sigma_{1}^\pm,
\\
\end{cases}
\end{equation}
where
\begin{equation*}
\begin{aligned}
&\mg_4(r,z,\h{\bf W}^{(1)},\h{\bf V}^{(1)},\h{\bf W}^{(2)},\h{\bf V}^{(2)})\\
&=-\bigg(B_{11}(r,z,\h{\bf V}^{(1)},\h{\bf W}^{(1)})-B_{11}(r,z,\h{\bf V}^{(2)},\h{\bf W}^{(2)})\p_r \h V_1^{(2)}
 + B_{22}(r,z,\h{\bf V}^{(1)},\h{\bf W}^{(1)})-B_{22}(r,z,\h{\bf V}^{(2)},\h{\bf W}^{(2)})\p_z \h V_2^{(2)}\\
& \quad\quad +B_{12}(r,z,\h{\bf V}^{(1)})-B_{12}(r,z,\h{\bf V}^{(2)}))\p_r\h V_3^{(2)} +B_{21}(r,z,\h{\bf V}^{(1)})-B_{21}(r,z,\h{\bf V}^{(2)}))\p_z \h V_1^{(2)}\bigg) \\ &\quad+G_1(r,z,\h{\bf V}^{(1)},\h{\bf W}^{(1)})-G_1(r,z,\h{\bf V}^{(2)},\h{\bf W}^{(2)}),\\
&\mg_5(r,z,\h {\bf W}^{(1)},\h {\bf V}^{(1)},\h {\bf W}^{(2)},\h {\bf V}^{(2)})=G_2(r,z,\h{\bf V}^{(1)},\h{\bf W}^{(1)})-G_2(r,z,\h{\bf V}^{(2)},\h{\bf W}^{(2)}),\\
&\mg_6(r,z,\h {\bf W}^{(1)},\h {\bf V}^{(1)},\h {\bf W}^{(2)},\h {\bf V}^{(2)})=G_3(r,z,\h{\bf V}^{(1)},\h{\bf W}^{(1)})-G_3(r,z,\h{\bf V}^{(2)},\h{\bf W}^{(2)}).\\
\end{aligned}
\end{equation*}
Then it  holds that
\begin{equation*}
\begin{cases}
\|\mg_4(\cdot,\h{\bf W}^{(1)},\h{\bf V}^{(1)},\h{\bf W}^{(2)},\h{\bf V}^{(2)})\|_{L^{2}(\mn)}\leq C\bigg(\|\h {\bf W}^d\|_{L^{2}(\mn)}+\delta_2^\star\|{\bf R}\|_{L^{2}(\mn)}\bigg),\\
\|\mg_5(\cdot,\h{\bf W}^{(1)},\h{\bf V}^{(1)},\h{\bf W}^{(2)},\h{\bf V}^{(2)})\|_{L^{2}(\mn)}\leq C\bigg(\|\h {\bf W}^d\|_{L^{2}(\mn)}+\delta_1^\star\|{\bf R}\|_{L^{2}(\mn)}\bigg),\\
\|\mg_6(\cdot,\h{\bf W}^{(1)},\h{\bf V}^{(1)},\h{\bf W}^{(2)},\h{\bf V}^{(2)})\|_{L^{2}(\mn)}\leq C\bigg(\|\h {\bf W}^d\|_{L^{2}(\mn)}+\delta_2^\star\|{\bf R}\|_{L^{2}(\mn)}\bigg).\\
\end{cases}
\end{equation*}
\par Similar arguments as for \eqref{6-6} yield
\begin{equation}\label{6-28}
\|(R_1,R_2)\|_{L^{2}(\mn)}+ \|R_3\|_{H^1(\mn)}
\leq C\bigg(\|\h {\bf W}^d\|_{H^1(\mn)}+(\delta_1^\star+\delta_2^\star)\|{\bf R}\|_{L^{2}(\mn)}\bigg).
\end{equation}
By the trace theorem, one has
\begin{equation}\label{6-29}
\|R_3(r_0,\cdot)\|_{L^{2}([-1,1])}\leq C\bigg(\|\h {\bf W}^d\|_{H^1(\mn)}+(\delta_1^\star+\delta_2^\star)\|{\bf R}\|_{L^{2}(\mn)}\bigg).
\end{equation}
Therefore,  collecting the estimates \eqref{6-28}-\eqref{6-29} together with \eqref{6-3} and \eqref{6-20} leads to
\begin{equation}\label{6-30}
\begin{aligned}
&\|\h{{\bf V}}^{(1)}-\h{{\bf V}}^{(2)}\|_{L^{2}(\mn)}+\|(\h V_{3}^{(1)}-\h V_{3}^{(2)})(r_0,\cdot)\|_{L^{2}([-1,1])}\leq \mc_7^\star\bigg(\|\h {\bf W}^d\|_{H^1(\mn)}+(\delta_1^\star+\delta_2^\star)\|\h{{\bf V}}^{(1)}-\h{{\bf V}}^{(2)}\|_{L^{2}(\mn)}\bigg)\\
&
\leq \mc_7^\star\bigg(\|\h {\bf W}^d\|_{H^1(\mn)}+
\bigg(6\mc_5^\star\sigma_v+4\mc_3^\star(6\mc_5^\star\sigma_v+\sigma_v)
\bigg)\|\h{{\bf V}}^{(1)}-\h{{\bf V}}^{(2)}\|_{L^{2}(\mn)}\bigg)
\end{aligned}
\end{equation}
 for the constant $\mc_7^\star>0$ depending only on $(b_0, \A_0,U_{0}, S_{0},E_{0},r_0,r_1,\epsilon_0)$.
If  $ \sigma_v $ satisfies
\begin{align}\label{6-31}
\sigma_v  \leq \frac1{2\mc_7^\star\bigg(6\mc_5^\star+4\mc_3^\star(6\mc_5^\star+1)
\bigg)},
\end{align}
there holds
\begin{align*}
\|\h{{\bf V}}^{(1)}-\h{{\bf V}}^{(2)}\|_{L^{2}(\mn)}+\|(\h V_{3}^{(1)}-\h V_{3}^{(2)})(r_0,\cdot)\|_{L^{2}([-1,1])}\leq 2\mc_7^\star\|\h {\bf W}^d\|_{H^1(\mn)}.
\end{align*}
Furthermore, it  follows from \eqref{6-25} that
\begin{align*}
\|{\bf W}^d\|_{H^1(\mn)}\leq \mc_6^\star(2C_7^\star+1)
\sigma_v\|\h {\bf W}^d\|_{H^1(\mn)}.
\end{align*}
Next, if  $ \sigma_v $ satisfies
\begin{align}\label{6-33}
\mc_6^\star(2C_7^\star+1)
\sigma_v \leq \frac14,
\end{align}
Then $\mb_2$ is a contractive mapping in $H^1(\mn)$-norm and there exists a unique fixed point ${\bf W}\in \mj_{\delta_{1}^\star, r_1}$ provided that the conditions \eqref{6-4}, \eqref{6-10},  \eqref{6-31}  and \eqref{6-33} hold.
\par Now, under the choices of $( \delta_2^\star,\delta_{1}^\star)$ given by \eqref{6-3} and \eqref{6-20}, one can choose $\sigma_1^{\star}$ so that whenever $\sigma_v\in (0, \sigma^{\star}_1]$, the conditions \eqref{5-44}, \eqref{6-4}, \eqref{6-10}, \eqref{6-31}  and \eqref{6-33} hold.
\begin{enumerate}[ \rm(1)]
 \item By  \eqref{6-3} and \eqref{6-20}, the condition \eqref{5-44} holds if
 \begin{equation*}
 \sigma_v \leq \frac{\delta_5^\star}{4\mc_3^\star(6\mc_5^\star+1)+6\mc_5^\star}=:\sigma_1
 \end{equation*}
 for $ (\mc_3^\star,\mc_5^\star) $ from \eqref{6-1} and \eqref{6-17}.
 \item The conditions \eqref{6-4} and \eqref{6-10} hold if
 \begin{equation*}
 \sigma_v \leq \min\left\{\frac{1}{16(6\mc_5^\star+1)(\mc_3^\star)^2},
 \frac{1}{8(6\mc_5^\star+1)\mc_4^\star(\mc_3^\star+1)}
 \right\}=:\sigma_2
  \end{equation*}
   for $ \mc_4^\star$  from \eqref{6-9}.
   \item The conditions \eqref{6-31}  and \eqref{6-33} hold if
   \begin{equation*}
    \sigma_v \leq \min\left\{\frac1{2\mc_7^\star\bigg(6\mc_5^\star+4\mc_3^\star(6\mc_5^\star+1)
\bigg)},\frac{1}{4\mc_6^\star(2C_7^\star+1)}\right\}=:\sigma_3
    \end{equation*}
  for $ (\mc_6^\star,\mc_7^\star) $ from \eqref{6-25} and \eqref{6-30}.
  \end{enumerate}
  Therefore, we choose $\sigma_{1}^{\star}$ as
 \begin{equation}
 \label{sigma-bd-step2}
\sigma_{1}^{\star} =\min\left\{\sigma_k: k=1,2,3
  \right\},
\end{equation}
so that if
\begin{equation}\label{choice1-sigma-full}
\sigma_v \leq \sigma_{1}^{\star},
  \end{equation}
then all the conditions \eqref{5-44}, \eqref{6-4}, \eqref{6-10}, \eqref{6-31}  and \eqref{6-33} hold  under the choices of $( \delta_2^\star,\delta_{1}^\star)$ given by \eqref{6-3} and \eqref{6-20}.
\par { \bf Step 3. Uniqueness.}
\par
Under the assumption of \eqref{choice1-sigma-full},
let $(U_1^{i},U_2^{i},U_3^{i}, \Phi^{i}, K^{i}, S^{i})$  ($i=1$, $2$) be two solutions to Problem \ref{probl2} and assume that both solutions satisfy \eqref{1-t-3-2}-\eqref{1-t-4-2}. Set
\begin{equation*}
  (\hat U_1, \hat U_2,\hat U_3, \hat \Phi, \hat K,\hat S)=(U_1^{1},U_2^{1},U_3^{1}, \Phi^{1}, K^{1}, S^{1})-(U_1^{2},U_2^{2},U_3^{2}, \Phi^{2}, K^{2}, S^{2}).
\end{equation*}
  Similar to proof of   the
contraction of the two mappings $\mb_1^{{\bf W}}$ and $\mb_2$, one can derive
\begin{equation*}
\|(\hat U_1,\hat U_3)\|_{L^2(\mn)}+\|\hat \Phi\|_{H^1(\mn)}\leq \mc_8^\star\sigma_v \bigg(\|(\hat U_1,\hat U_3)\|_{L^2(\mn)} +\|(\hat U_2,\hat \Phi,\hat K,\hat S)\|_{H^1(\mn)}\bigg),
\end{equation*}
and
\begin{equation*}
\|(\hat U_2,\hat K,\hat S)\|_{H^1(\mn)}\leq \mc_8^\star\sigma_v \bigg(\|(\hat U_1,\hat U_3)\|_{L^2(\mn)} +\|(\hat U_2,\hat \Phi,\hat K,\hat S)\|_{H^1(\mn)}\bigg),
\end{equation*}
where the constant $\mc_8^\star>0$ depends only on $(b_0, \A_0,U_{0}, S_{0},E_{0},r_0,r_1,\epsilon_0)$. Collecting these estimates leads to
\begin{equation}\label{6-35}
\|(\hat U_1,\hat U_3)\|_{L^2(\mn)}+\|(\hat U_2,\hat \Phi,\hat K,\hat S)\|_{H^1(\mn)}\leq  2\mc_8^\star\sigma_v \bigg(\|(\hat U_1,\hat U_3)\|_{L^2(\mn)} +\|(\hat U_2,\hat \Phi,\hat K,\hat S)\|_{H^1(\mn)}\bigg),
\end{equation}
So if
\begin{equation}\label{6-37}
\sigma_v\leq \frac{1}{4\mc_8^\star}=:\sigma_2^\star,
\end{equation}
one obtains
\begin{equation*}
(\hat U_1, \hat U_2,\hat U_3, \hat \Phi, \hat K,\hat S)=(U_1^{1},U_2^{1},U_3^{1}, \Phi^{1}, K^{1}, S^{1})-(U_1^{2},U_2^{2}, U_3^{2},\Phi^{2}, K^{2}, S^{2})=0.
\end{equation*}
The proof of Theorem \ref{th2} is completed by choosing $ \sigma^\star $ as
\begin{equation*}
\sigma^\star =\min\{\sigma_1^\star ,\sigma_2^\star\},
\end{equation*}
where $  \sigma_1^\star $ is given in  \eqref{sigma-bd-step2}.
That is, the background supersonic   flow is structurally stable within rotational flows under axisymmetric  perturbations of the boundary
conditions in \eqref{1-c-c}.
\par {\bf Acknowledgement.} Wang is  partially supported by National Natural Science Foundation of China  11925105, 12471221.
\par {\bf Data availability.} No data was used for the research described in the article.
    \par {\bf Conflict of interest.} This work does not have any conflicts of interest.

 \end{document}